
\documentclass[10pt]{amsart}


\usepackage{amsfonts,amsmath,latexsym,amssymb,verbatim,amsbsy,amsthm}
\usepackage{dsfont,bm}
\usepackage{enumitem}
\usepackage{multirow}

 

\usepackage[top=1in, bottom=1in, left=1in, right=1in]{geometry}

\usepackage[dvipsnames]{xcolor}

\usepackage[colorlinks=true, pdfstartview=FitV, linkcolor=RoyalBlue,citecolor=ForestGreen, urlcolor=blue]{hyperref}



\theoremstyle{plain}
\newtheorem{THEOREM}{Theorem}[section]

\newtheorem{theorem}[THEOREM]{Theorem}
\newtheorem{corollary}[THEOREM]{Corollary}
\newtheorem{lemma}[THEOREM]{Lemma}
\newtheorem{proposition}[THEOREM]{Proposition}

\theoremstyle{definition}

\newtheorem{definition}[THEOREM]{Definition}

\theoremstyle{remark}


\newcommand{\thm}[1]{Theorem~\ref{#1}}
\newcommand{\lem}[1]{Lemma~\ref{#1}}
\newcommand{\defin}[1]{Definition~\ref{#1}}

\newcommand{\cor}[1]{Corollary~\ref{#1}}
\newcommand{\prop}[1]{Proposition~\ref{#1}}

\newcommand{\sect}[1]{Section~\ref{#1}}


\def \b {\beta}
\def \g {\gamma}
\def \d {\delta}
\def \e {\varepsilon}
\def \f {\varphi}
\def \k {\kappa}
\def \l {\lambda}
\def \n {\nabla}
\def \s {\sigma}
\def \t {\tau}
\def \th {\theta}
\def \o {\omega}
\def \w {\omega}

\def \D {\Delta}

\def \L {\Lambda}
\def \Th {\Theta}
\def \O {\Omega}


\def \bk {{\bf k}}
\def \bl {{\bf l}}


\def \oW { \overline{W} }

\def \oS { \overline{S} }

\def \oSr { \overline{Sr} }


 \def \st {\mathrm{s}}
  
 \def \rmb {\mathrm{b}}
  \def \rmc {\mathrm{c}}
  \def \rmw {\mathrm{w}}
    \def \rmd {\mathrm{d}}
      \def \rmh {\mathrm{h}}
      \def \rmm {\mathrm{m}}
      \def \rmx {\mathrm{x}}
       
       \def \rmv {\mathrm{v}}
       \def \rmn {\mathrm{n}}

\def \rmA {\mathrm{A}}


\def \cA {\mathcal{A}}

\def \cC {\mathcal{C}}
\def \cD {\mathcal{D}}
\def \cE {\mathcal{E}}

\def \cH {\mathcal{H}}
\def \cI {\mathcal{I}}

\def \cO {\mathcal{O}}
\def \cP {\mathcal{P}}


\def \oW { \overline{W} }

\def \oS { \overline{S} }

\def \oSr { \overline{Sr} }
\def \oTh {\overline{\Th}}

\def \oTh { \overline{\Th} }


\newcommand{\N}{\ensuremath{\mathbb{N}}}   
\newcommand{\Z}{\ensuremath{\mathbb{Z}}}   
\newcommand{\R}{\ensuremath{\mathbb{R}}}   
\newcommand{\T}{\ensuremath{\mathbb{T}}}   
\newcommand{\I}{\ensuremath{\mathbb{I}}}   


\def \loc {\mathrm{loc}}

\def \one {{\mathds{1}}}

\def \Lip {\mathrm{Lip}}


\newcommand{\jap}[1]{\left\langle #1 \right\rangle}
\newcommand{\ave}[1]{ \left[ #1 \right]}

\DeclareMathOperator{\supp}{supp} %

 %
 %
 %
 %
 %
 %
 %
 %
 %
 %
 %
 %
 %
 %
 %



\def \p {\partial}

\def \ss {\subset}

\def \GL {Gr\"onwall's Lemma}
\def \HI {H\"older inequality}

\def \CK{ Csisz\'ar-Kullback inequality}

\renewcommand{\geq}{\geqslant}

\renewcommand{\leq}{\leqslant}


\def \dx  {\, \mbox{d}x}

\def \dxi  {\, \mbox{d}\xi}
\def \dt  {\, \mbox{d}t}

\def \dy  {\, \mbox{d}y}
\def \dz  {\, \mbox{d}z}

\def \dr  {\, \mbox{d}r}

\def \ds  {\, \mbox{d}s}
\def \dw  {\, \mbox{d}w}

\def \dB  {\, \mbox{d}B}

\def \dmu  {\, \mbox{d}\mu}

\def \dk  {\, \mbox{d}\kappa}

\def \deta  {\, \mbox{d}\eta}
\def \dtau  {\, \mbox{d}\tau}
\def \drho  {\, \mbox{d}\rho}

\def \dv  {\, \mbox{d} v}

\def \ddt  {\frac{\mbox{d\,\,}}{\mbox{d}t}}


\def \domain {{\O \times \R^n}}


\begin{document}

\title[Fokker-Planck-Alignment equations]{Global well-posedness and relaxation for solutions of the Fokker-Planck-Alignment equations}

\author{Roman Shvydkoy}

\address{851 S Morgan St, M/C 249, Department of Mathematics, Statistics and Computer Science, University of Illinois at Chicago, Chicago, IL 60607}

\email{shvydkoy@uic.edu}

\subjclass{35Q84, 35Q35, 92D25, 92D50}

\date{\today}

\keywords{collective dynamics, Cucker-Smale system, Fokker-Planck equation, hypocoercivity}

\thanks{\textbf{Acknowledgment.}  
	This work was  supported in part by NSF
	grant  DMS-2405326 and Simons Foundation.}

\begin{abstract} In this paper we prove global existence of weak solutions, their regularization, and  relaxation for large data  for a broad class of Fokker-Planck-Alignment models which appear in collective dynamics. The main feature of these results, as opposed to previously known ones, is the lack of regularity or no-vacuum requirements on the initial data. With a particular application to the classical kinetic Cucker-Smale model, we demonstrate that any bounded data with finite higher moment, $f_0 \in L^1(1+ |v|^q) \cap L^\infty$, $q \geq n+4$, gives rise to a global instantly smooth solution, satisfying entropy equality and relaxing exponentially fast.

The results are achieved through the use of  a new thickness-based renormalization procedure, which circumvents the problem of degenerate diffusion in non-perturbative regime.
\end{abstract}

\maketitle 
 \setcounter{tocdepth}{1}
\tableofcontents

\section{Introduction}

In this paper we study regularity and long time behavior of solutions to a class of kinetic models of Fokker-Planck type arising in collective dynamics,
\begin{equation}\label{e:FPA}
\p_t f + v \cdot \n_x f = \s \D_v f + \n_v \cdot ( \cA(f) f ).
\end{equation}
Here we assume the flock is confined to a periodic-in-$x$ environment $x\in \O= L \T^n$, and $v\in \R^n$. The dynamics is driven by an alignment force  $\cA(f)$ given by
\begin{equation}\label{ }
\cA(f)(x,v) = \int_{\domain} \phi_\rho(x,y) (v - w) f(y,w) \dw \dy,
\end{equation}
where $\phi_\rho \in L^1(\rho \times \rho)$ is a non-negative communication kernel that protocols interactions between agents, and $\s \geq 0$ is a diffusion coefficient chosen to be proportional to the total strength of influence of the flock on location $x$,  
\begin{equation}\label{e:sigma}
\s(x) = \s_0  \st_\rho(x), \quad \st_\rho(x) = \int_\O \phi_\rho(x,y) \rho(y) \dy, \ \s_0 >0.
\end{equation}

Due to insufficient smoothness of the coefficients, only $\cA(f)\in L^2$, $\s\in L^\infty$, and degenerate diffusion $\s\geq 0$ the well-posedness results for \eqref{e:FPA} have been scarce, see \cite{BCC2011,DFT2010,KMT2013,S-EA,S-hypo} and discussion below. Those mostly depend on thickness assumption on data or solutions are constructed in weak finite moment classes. The main purpose of this paper is to establish well-posedness, regularization, and relaxation of solutions to \eqref{e:FPA} starting from arbitrary large data. 

Models of type \eqref{e:FPA} first appeared in Ha and Tadmor \cite{HT2008} and later Ha and Liu \cite{HL2009} as a mean-field limit of the celebrated discrete Cucker-Smale system, see \cite{CS2007a,CS2007b}
\begin{equation}\label{e:CS}
\dot{x}_i = v_i, \quad \dot{v}_i =\sum_{j=1}^N m_j \phi(x_i-x_j) (v_j - v_i). 
\end{equation}
where $\phi$ is a smooth radially decreasing density-independent kernel. We refer to these surveys \cite{ABFHKPPS,Axel97,Ben2005,Edel2001,Darwin,VZ2012,MT2014,MP2018,S-book,Tadmor-notices} for the broad range of applications of the agent-based  alignment systems. 

The inclusion of noise $\sqrt{2\s} \dB_i$ in the momentum equation has become a standard attribute of many agent-based models of swarming to account for random fluctuations caused by either internal or external factors, see \cite{BCC2011,Choi2016,DFT2010, HaXZh2018,KMT2013,Shvart2019}. While formally, the probability distribution of a random process corresponding to such a system solves the Fokker-Planck-Alignment equation \eqref{e:FPA}, the rigorous analysis of stochastic mean-field limit requires scrutiny.  The work of Bolley,  Ca\~{n}izo, and Carrillo  \cite{BCC2011} initially treated the case of a constant noise $\s = \s_0>0$ and  sub-Maxwellian data  $\int_\domain e^{a|v|^p} f \dv\dx <\infty$. Subsequently, the result has been extended in  \cite{S-EA} to include density-dependent parameters as in \eqref{e:FPA}.

The renewed interest in alignment models of type \eqref{e:FPA} is motivated by its connection to  \emph{emergence}, which is a cumulative term for a range of self-organization phenomena observed in systems driven by purely local interaction rules.  In the context of \eqref{e:CS} emergence presents itself as the asymptotic alignment $\max |v_i - v_j| \to 0$ as $t\to \infty$. It has been established under the fat tail condition $\int_0^\infty \phi(r) \dr = \infty$ on $\O = \R^n$  in the aforementioned works \cite{CS2007a,CS2007b,HT2008,HL2009}, and this condition is known to be sharp, see \cite{S-book}. For purely local kernels $\phi(r) \sim \one_{r<r_0}$ one obvious example of desynchronization is given by two radially divergent agents starting from a distance $r>r_0$. Such kind of dispersion can be controlled by an additional confinement force \cite{ShuT2019}, or without any force on the periodic domain $\O = L \T^n$ where confinement is automatic. In this case examples of desynchronized agents become more rare -- those are agents traveling along periodic orbits with mutually rational velocity directions -- and so the asymptotic alignment can only be expected for generic data. So far this problem has been solved in limited cases: for the sticky particle model, for $n=1, N\in \N$, and for $n\in \N, N=2$, see \cite{S-GAC,DS2021}. 

Stochastic forces preclude solutions from locking themselves in a periodic motion. The expected asymptotic behavior in this case is relaxation to an equilibrium $f \to \mu$, which belongs to the kernel of the Fokker-Planck operator. For the multiplicative noise such as \eqref{e:sigma} we have a family of equilibria given by global Maxwellians,
\begin{equation}\label{e:Max}
\mu_{\s_0,\bar{u}} = \frac{1}{|\O|(2\pi \s_0)^{n/2}} e^{- \frac{|v - \bar{u}|^2}{2\s_0}}, \quad \bar{u}\in \R^n.
\end{equation}
So, if relaxation to one of these equilibria is established (for conservative systems $\bar{u}$ is determined by the initial momentum) then the alignment can be recovered in the vanishing noise limit, $f \to \mu_{\s_0,\bar{u}} \to \d_0(v-\bar{u})$.  

Research in this direction started with the work of  Duan et al \cite{DFT2010}, which established relaxation for perturbative data $g = (f - \mu_{\s,\bar{u}}) / \sqrt{\mu_{\s,\bar{u}}}$ small in a higher order Sobolev metric. For the local alignment force $\n_v \cdot ((v-u) f)$ (formally corresponding to $\phi_\rho = \frac{1}{\rho(x)} \d_0(x-y)$) the same result was proved by Choi \cite{Choi2016}. Both works use stability analysis inspired by new developments for collisional models at the time. 
The first result for large data was established in \cite{S-hypo} for communication kernel given by $\phi_\rho = \int_\O \frac{\psi(x-z) \psi(y-z)}{\rho \ast \psi(z)} \dz$, see \ref{Cg+}-protocol below, and under uniform bound on the macroscopic field $\sup_{t>0} \|u(t)\|_\infty <\infty$. The technique uses an extension of the linear hypercoercivity method of Villani  \cite{Villani-hypo} that fits due to a special cancelation in the nonlinear alignment force on the Fisher information level. Finally, an unconditional relaxation for large data for the Cucker-Smale-based model with a Bochner-positive kernel $\phi = \psi \ast \psi$ was established in \cite{S-EA}.  All large data results, however, have been proved for a given classical solution, which is only known to exist starting from thick data and for locally bounded alignment forces, see \cite{S-EA}. 

In light of these developments it has become imperative to develop a more inclusive well-posedness theory for the Fokker-Planck-Alignment models \eqref{e:FPA} that incorporates thin data away from equilibrium. Generally, weak solutions in Lebesgue classes can be constructed either from the mean-field limit for sub-Maxwellian data \cite{BCC2011}, or through the use the Averaging Lemma as in Karper, Mellet, and Trivisa \cite{KMT2013} for any $f\in L^1(1+|v|^2+|x|^2) \cap L^\infty$. Such solutions are only shown to satisfy entropy \emph{inequality} in the integral form, which is not sufficient for the relaxation analysis (in view of the need to use \GL\ in the ultimate construction of the Lyapunov function and access to the higher order Fisher regularity). On a related note,  regularization  for models similar to \eqref{e:FPA} with no alignment and noise depending on the density $\rho$ have been established in \cite{AZ2024,IM2021}. Their approach relies on an a priori $L^\infty$-control for the distribution $h = f/\mu \in L^\infty$, which is unavailable in the presence of alignment force.
 
Let us state our main result as it pertains to the most classical Cucker-Smale based model \eqref{e:FPA}. We refer to \sect{ss:notation} for the relevant notation.
 
\begin{theorem}\label{t:mainrelaxCS}
Consider the Fokker-Planck-Alignment equation \eqref{e:FPA} based on the Cucker-Smale alignment protocol $\phi_\rho(x,y) = \phi(x-y)$, with a smooth local kernel $\phi(r) \geq c_0 \one_{|r|<r_0}$. For any data $f_0 \in  L^1(1+|v|^q) \cap L^\infty$, $q \geq n+4$, there exists a global weak solution in the same class and with finite Fisher information $\st_\rho \frac{|\n_v f|^2}{f} \in L^1_{t,x,v}$, satisfying the entropy law \eqref{e:entropylaw}. Every such solution gains uniform Gaussian tails 
\begin{equation}\label{e:GaussIntro}
f(t,x,v) \geq b e^{-a |v|^2} , \qquad \forall (t,x,v) \in [\e,\infty) \times \T^n \times\R^n .
\end{equation}
 and regularizes into $f\in C([\e,T]; H^m_q) \cap C^1([\e,T]; H^{m-1}_{q-3})$ for any $m\in \N$ instantly. Solutions are unique from non-vacuous data, and every solution relaxes to the Maxwellian exponentially  fast,
\begin{equation}\label{e:introrelax}
\|f(t) - \mu_{\s_0,\bar{u}_0}\|_{L^1(\domain)} \leq c  e^{- c t},
\end{equation}
where $c>0$ depending only on the initial condition and fixed parameters of the model.
\end{theorem}

The existence and uniqueness are a consequence of our main \thm{t:weak}. Gain of positivity and instant regularization are addressed in \prop{p:Gauss} and \prop{p:instantreg}, respectively.  Finally, the relaxation is a consequence of \cite[Proposition 8.1]{S-EA} which is stated in \prop{p:relax}.  All these component are proved for a much broader class of alignment models. We note that the exact same statement of \thm{t:mainrelaxCS} holds for all communication protocols that are symmetric, local, and of mapping type $(2,\infty)$, see \sect{s:basic} for the definitions.  

 Let us point out the main technical issues. First and foremost, thin data produces a degenerate strength $\st_\rho$ which meddles both with the ellipticity of the equation and a priori Fisher regularity, $\st_\rho \frac{|\n_v f|^2}{f} \in L^1_{t,x,v}$. The latter is critical for the entropy law in the same way as the Onsager-$1/3$ regularity is critical  for the energy conservation in incompressible fluids, see \cite{CCFS2008,cet,Isett2018,ds-1}. To prove the entropy law we resort to DiPerna-Lions renormalization as a tool to establish a distributional equation for the Boltzmann functional $\b(f) = f \log f$. This implies the entropy {\em equality}  starting from time $t=0$ as well as other useful tools such as propagation of moments and weak maximum principle.  The degeneracy in  Fisher information, however, stands in the way of controlling the DiPerna-Lions commutators. To circumvent this difficulty we implement a new approach by adding an extra step - a ``pre-renormalization" using thickness-weighted 
mollification, $\frac{(\th f)_{\e_1}}{\th + \e_2}$, where $\th$ is the thickness of the flock, and $\e_1,\e_2 > 0$ are small parameters, see \defin{d:th} and  \eqref{e:FPAmoll}. The pre-renormalization allows to gain direct access to the Fisher-regularity of $\th f$ and use it in the estimates on commutators  encountered in the main renormalization procedure.

With the  entropy law at hand, further analysis can be essentially embedded into the framework developed in \cite{S-EA}. We first obtain the spread of positivity \eqref{e:GaussIntro}, and consequently, restore the thickness and non-degeneracy of the diffusion $\s>0$. This further implies three facts -- regularity of coefficients in the equation \eqref{e:FPA}, hypoelliptic regularization of solution into any $H^m_q$, and gain of spectral gap uniformly in time for the alignment operator, see \lem{l:spgap}. At this point we have all the ingredients for the application of the general relaxation result of \cite[Proposition 8.1]{S-EA} to prove \eqref{e:introrelax}.

\subsection{Notation}\label{ss:notation} We denote by $\cP(\O)$ the set of Borel probability measures on $\O$. Given a measure $\mu$ on $\O$ we denote $(u,v)_\mu = \int_\O u(x)v(x) \dmu(x)$, and $L^p(\mu)=\{f: \int_{\O} |f|^p \dmu <\infty\}$. The classical space of tempered distributions is denoted $\cD'(\domain)$. For a probability density $f$ over $\domain$ the energy and entropy are denoted, respectively,
\begin{align}
\cE & = \frac12  \int_\domain |v|^2 f  \dv \dx \label{e:energy}\\
\cH & =  \s_0 \int_\domain f \log f  \dv \dx + \frac12  \int_\domain |v|^2 f  \dv \dx. \label{e:entropy}
\end{align}
Functionally, the wellposedness of our kinetic models requires the use of weighted Lebesgue and Sobolev spaces.  Denoting the Japanese brackets by $\jap{v} = (1+|v|^2)^{\frac12}$, we define
\begin{equation}\label{}
L^p_q(\domain) = \left\{ f:  \int_\domain \jap{v}^q |f|^p \dv \dx <\infty  \right\},
\end{equation}
and 
\begin{equation}\label{e:Sobdef}
H^{m}_q(\domain) =  \left\{ f :  \sum_{ 2|\bk| + | \bl | \leq 2m}   \int_\domain  \jap{v}^{q - 2|\bk| - | \bl |  } | \p^{\bk}_{x} \p_v^{\bl} f |^2 \dv\dx <\infty \right\}.
\end{equation}
Note that the higher weights are placed on lower order derivatives and if $2m > q$, then the top derivatives allow to grow at infinity. 

We adopt  the classical definition of the Kantorovich-Rubinstein distance between probability measures $\rho',\rho''\in \cP(\O)$:
\begin{equation}\label{e:KR}
	W_1(\rho',\rho'') = \sup_{ \Lip(h) \leq 1} \left| \int_{\O} h(x) [ \drho'(x) - \drho''(x) ]\right|.
\end{equation}

\section{Communication protocols and description of the results}\label{s:basic}

In this section we give detailed description of  our results. Following \cite{S-EA}, we adopt the framework  based on the concept of a regular communication protocol, which we present in a local format in order to establish most inclusive results.

\subsection{Communication protocols}\label{ss:cp}

By the {\em communication kernel} of a flocking model we understand a family of non-negative functions $\phi_\rho: \O \times \O \to [0,\infty]$, $\rho \in \cP(\O)$,  which are uniformly in  $L^1(\rho\otimes\rho)$,
\begin{equation}\label{e:Fint}
\sup_{\rho \in \cP(\O)} \int_{\O \times \O} \phi_\rho(x,y) \drho(y) \drho(x) <\infty.
\end{equation}
The  corresponding integral operators are denoted
\begin{equation}\label{e:warep}
\rmw_\rho(u)(x) =  \int_\O \phi_\rho(x,y) u(y) \drho(y).
\end{equation}
A priori from \eqref{e:Fint} such operators are only bounded on $L^\infty(\rho) \to L^1(\rho)$, however further assumptions will widen the range of mapping properties considerably. 
It is crucial to  have kernels defined outside the support of the flock in order to have a capability to compare them for distinct flocks $\rho',\rho''$. In particular, the integral formula \eqref{e:warep} can be considered for $x$ outside the support of $\rho$, although a priori we do not guarantee that such integrals will be finite.

Another fundamental component of a flocking model is called thickness, which is a measure of presence of the flock at a given location.

\newlist{thickness}{enumerate}{1}

\setlist[thickness, 1]
{label=($\th$\arabic{thicknessi}), 
leftmargin=30pt,
}

\begin{definition}\label{d:th}
A {\em thickness} is a uniformly bounded family of functions $\th_\rho : \O \to [0,\oTh]$ satisfying the following conditions:
\begin{thickness}
\item \label{i:th1} $\th_\rho$ is lower semi-continuous;
\item \label{i:th2} $\rho( \{\th_\rho = 0 \} ) = 0$, for all $\rho \in L^1(\O)$;
\item  \label{i:th3} there exists a positive function $c(r)>0$, $r>0$,  such that $ \th_\rho(x) \geq c(\min \rho)$, for all $\rho\in \O$;
\item  \label{i:th4} Continuity-in-$\rho$: there exists a $C>0$ such that for all $\rho',\rho''\in \cP(\O)$,
\[
\sup_\O| \th_{\rho'} (x) - \th_{\rho''}(x)| \leq C W_1(\rho',\rho'').
\]
\end{thickness}
The thickness of a set $B \ss \O$ is defined by 
\begin{equation}\label{e:thS}
\th_\rho(B)  = \inf_{x\in B} \th_\rho(x) ,
\end{equation}
and the  {\em global thickness} is defined by $\th_\rho(\O)$.
\end{definition}

One particular example of thickness, which was traditionally viewed as the universal choice for many alignment models is the {\em ball-thickness}. The ball-thickness represents a tapered  mass of a ball around a given location. To define it let $\chi \in C^\infty_0$ be a standard radially symmetric mollifier supported on the unit ball,  and fix some $r_0>0$.  Then
\begin{equation}\label{e:ballth}
\th_{\rho,r_0}(x) = \int_\O \rho(x -r_0 y) \chi(y) \dy.
\end{equation}
A distinct feature of the ball-thickness is locality -- the mass of the flock is measured in the immediate neighborhood of the agent $x$.  This may not always be the case as for instance in segregation models \ref{Segg} we present later.

\begin{lemma}\label{l:contth}
Suppose $ \rho \in \cP(\O)$ satisfies the continuity equation
\[
\p_t \rho + \n_x \cdot (u\rho) = 0,
\]
weakly with $u\in L^\infty([0,T); L^1(\rho))$. Then  for every point $x\in \O$, the function $t \to \th_{\rho(t)}(x)$ is Lipschitz on $[0,T)$ with uniformly bounded distributional derivative, 
\[
\sup_{x\in \O}| \p_t \th_\rho(x)| \leq C \| u\|_{L^1(\rho)}.
\]
\end{lemma}
\begin{proof}
For every point $x\in \O$, by \ref{i:th4}, we have 
\[
|\th_{\rho(t)}(x) - \th_{\rho(s)}(x)| \leq C W_1(\rho(t), \rho(s)).
\]
If $h: |\n h|\leq 1$, we have
\[
\int_\O h \drho(t) - \int_\O h \drho(s) = \int_s^t \int_\O \n h(x) u(x) \drho(\t,x) \dtau \leq \int_s^t \| u\|_{L^1(\rho)} \dt \leq C (t-s).
\]
So, $W_1(\rho(t), \rho(s)) \leq C (t-s)$, and the conclusions follow from the Rademacher Theorem.
\end{proof}

From analytical standpoint the role of thickness is to underline regions of space where the communication kernel is guaranteed to be smooth and continuous as a function of $\rho$. Namely, we adopt the following local regularity assumptions:

\newlist{regularity}{enumerate}{1}

\setlist[regularity, 1]
{label=(r\arabic{regularityi}), 
leftmargin=30pt,
}

\begin{regularity}
\item \label{i:r1}  for any $k = 0,1,\dots$ there is a positive function $C_k: (0,\infty) \to (0,\infty)$ such that for all $\rho\in \cP(\O)$ 
\[
 |\p^k_{x,y} \phi_\rho(x,y)|   \leq C_k(\th_\rho(x)), \ \forall (x,y)\in \O \times \O,
\]
\item \label{i:r2} the following continuity-in-$\rho$ holds: for any  $\rho,\rho' \in \cP(\O)$  we have
\[
|\phi_{\rho} (x,y) -\phi_{\rho'}(x,y)| \leq C(\th_{\rho}(x),\th_{\rho'}(x)) W_1( \rho, \rho'),\ \forall (x,y)\in \O \times \O,
\]
for some $C: (0,\infty)\times(0,\infty) \to (0,\infty)$.
\end{regularity}

Note that at those points where thickness vanishes the above assumptions give no information, in other words, $C$ and $C_k$'s can blow up at $0$. Also,  by the lower semicontinuity \ref{i:th1}  thick regions $\th_\rho > 0$ are open, and therefore the regularity conditions hold on open subsets. 

Lastly, we introduce the marginals of $\phi_\rho$ which are called {\em communication strength-functions}. Strength is a measure of  total influence of the flock on an agent $x\in \O$  defined by
\begin{equation}\label{e:s}
\st_\rho(x) = \int_\O \phi_\rho(x,y) \drho(y): \O \to [0,\infty].
\end{equation}
The other marginal  
\begin{equation}\label{e:sadj}
\st^*_\rho(x) = \int_\O \phi_\rho(y,x) \drho(y): \O \to [0,\infty]
\end{equation}
will be called the {\em adjoint strength} and it measures the opposite influence of the agent $x$ on the flock.  
 
A priori both strengths are finite $\rho$-almost everywhere as follows from the integrability condition \eqref{e:Fint}. However, we assume much more than that:

\newlist{strength}{enumerate}{1}

\setlist[strength, 1]
{label=(s\arabic{strengthi}), 
leftmargin=30pt,
}

\begin{strength}
\item \label{i:s1} $\sup_{\rho \in \cP(\O)}\sup_{x\in \O} \st_\rho(x)  = \oS<\infty$;
 \item  \label{i:s2} there exists a universal constant $C>0$  such that 
\begin{equation*}\label{e:s3}
\st^*_\rho(x) \leq C \st_\rho(x), \quad \rho\text{-almost everywhere}.
\end{equation*}
\end{strength}
The latter condition is somewhat less intuitive. Its phenomenological explanation is to rule out  extreme influencers that affect the flock disproportionally stronger than being affected by it.  For example, it rules out a ``dictator" $x$ which affects the flock positively $\st^*_\rho(x)>0$ while being completely insensitive to feedback $ \st_\rho(x)=0$. The condition \ref{i:s2} is automatically satisfied for a class of communication kernels we call {\em conservative} 
\begin{equation}\label{e:consdef}
\st_\rho = \st^*_\rho, \quad \rho\text{-almost everywhere},
\end{equation}
which includes all  {\em symmetric} kernels
\begin{equation}\label{e:symm}
\phi_\rho(x,y) = \phi_\rho(y,x), \quad \rho \otimes \rho \text{-almost everywhere.}
\end{equation}

To fix the terminology we make the following definition.

\begin{definition}\label{}
A family $\cC = \{ \phi_\rho, \th_\rho\}_{\rho \in \cP(\O)}$ satisfying \eqref{e:Fint}, \ref{i:th1}-\ref{i:th4}, \ref{i:r1}-\ref{i:r2}, \ref{i:s1}-\ref{i:s2} is said to define a {\em  regular communication protocol}.
\end{definition}

All the models we consider naturally facilitate communication between nearby agents provided the flock is present at a given location. 
\begin{definition}\label{d:loc}
The communication kernel is {\em locally supported} (or local, for short) if there exists a $r_0, c_0 >0$  such that for all $\rho\in \cP(\O)$ and all $x,y\in \O$, we have
\begin{equation}\label{e:loc}
\phi_\rho(x,y) \geq c_0 \one_{|x-y| <r_0} \one_{\th_\rho >0}(x).
\end{equation}
\end{definition}
Taking the fist marginal we readily obtain a lower bound on the strength in terms of ball-thickness:
\begin{equation}\label{e:stbelow}
\st_\rho(x) \geq c_0 \th_{\rho,r_0}(x)\one_{\th_\rho >0}(x).
\end{equation}

\subsection{Core examples}

Let us highlight a few examples of communication protocols, starting with the classical one that defines the Cucker-Smale model \eqref{e:CS}: for $\phi\in C^\infty(\O)$,
\begin{equation}\label{CS}
\phi_\rho(x,y) = \phi(x-y), \quad \th_\rho = \rho \ast \phi.
\tag{CS}
\end{equation}
The model is clearly symmetric with the strength given by $\st_\rho = \rho \ast \phi$, and regularity conditions  \ref{i:r1}-\ref{i:r2} independent of thickness (we call such protocols {\em uniformly regular}, see \cite{S-EA}). The protocol is local, with $\t = 0$ provided the defining kernel itself is local
\begin{equation}\label{e:locf}
\phi(r) \geq c_0 \one_{|r| <r_0}.
\end{equation}

The Motsch-Tadmor model introduced in \cite{MT2011,MT2014} corresponds to the averaged (or right-stochastic) version of the 
\ref{CS}-protocol\footnote{From here on we adopts a convention  that quotients involving density in the denominator are zero outside the support of the denominator.}, 
\begin{equation}\label{MT}
\phi_\rho(x,y) = \frac{\phi(x-y)}{\rho \ast \phi(x)}, \quad \th_\rho = \rho \ast \phi. \tag{MT}
\end{equation}
Here the strength is given by $\st_\rho(x) = \one_{\rho \ast \phi > 0}(x)$, and in fact $\st_\rho = 1$, $\rho$-almost everywhere on the support of $\rho$. The adjoint strength-function takes form
\[
\st_\rho^* = \left(  \frac{\rho }{\rho \ast \phi }\right) \ast \phi.
\]
The condition \ref{i:s2} becomes somewhat non-trivial to verify. It amounts to establishing 
\begin{equation}\label{e:KMTst*}
 \left(\frac{\rho }{\rho \ast \phi }\right) \ast \phi \leq C,
\end{equation}
on the set where $\rho \ast \phi \neq 0$ (otherwise both sides vanish). The inequality \eqref{e:KMTst*} indeed holds for local kernels \eqref{e:locf} and was first proved in Karper, Mellet, Trivisa \cite{KMT2013} exactly for the purpose of showing that the averaging $\rmw_\rho(u) =\frac{(u\rho)\ast \phi}{\rho \ast \phi}$ (also known as the Favre filtration) is bounded on $L^2(\rho)$. On the torus, it follows from a slightly stronger bound that we will come back to repeatedly in the sequel. Let us record it for future reference.
\begin{lemma}\label{e:KMT} Suppose \eqref{e:locf}.
There is an absolute constant $C$ depending only on $\phi$ such that 
\begin{equation}\label{e:KMT0}
\sup_{\rho\in \cP(\O)} \int_{\O} \frac{\drho(x)}{\rho\ast \phi(x) } < C.
\end{equation}
\end{lemma}
\begin{proof}
By locality \eqref{e:locf}, $\rho\ast \phi(x) \gtrsim \rho(B_{r_0}(x))$. Let us cover $\O$ with $J$ balls, where $J$ depends only on $n$ and $r_0$,  $\O \ss \cup_{j=1}^J B_{r_0/2}(x_j)$. Since for all $x\in B_{r_0/2}(x_j)$, we have $B_{r_0/2}(x_j) \ss B_{r_0}(x)$, and hence (with the standard $0$-convention if the denominators vanish)
\begin{equation}
\int_{\rho \ast \phi > 0 } \frac{\drho(x)}{\rho\ast \phi(x) }  \lesssim \sum_{j=1}^J \int_{B_{r_0/2}(x_j)} \frac{\drho(x)}{  \int_{B_{r_0}(x)} \rho(y)\dy } \leq  \sum_{j=1}^J \int_{B_{r_0/2}(x_j)} \frac{\drho(x)}{  \int_{B_{r_0/2}(x_j)} \rho(y)\dy }  \leq J.
\end{equation} 
\end{proof}

In order to achieve better mapping properties of the models like \ref{MT}, yet still retaining a degree of the \ref{MT}-normalization one can interpolate between the \ref{CS} and \ref{MT} and define a range of protocols, we call $\g$-protocols. Let us fix a parameter $0 \leq \g  \leq 1$, and consider
\begin{equation}\label{Cg}
\phi_\rho(x,y) =  \frac{\phi(x-y)}{(\rho \ast \phi(x))^\g}, \quad \th_\rho = \rho \ast \phi. \tag{$\cC_\g$}
\end{equation}
So, the limiting case $\g =1$ corresponds to Motsch-Tadmor $\cC_1 =$\ \ref{MT}, while $\g=0$ corresponds to Cucker-Smale, $\cC_0 =$\ \ref{CS}.   Just like for the Motsch-Tadmor model, all the properties are verified similarly, except for \ref{i:s2}. With the strength-function given by $\st_\rho =(\rho \ast \phi)^{1-\g}$ and the adjoint strength given by $\st_\rho^* = \left( \frac{\rho}{(\rho \ast \phi)^\g} \right) \ast \phi$, the inequality \ref{i:s2} becomes
\begin{equation}\label{e:KMTb}
 \left( \frac{\rho}{(\rho \ast \phi)^\g} \right) \ast \phi \leq C (\rho \ast \phi)^{1-\g}.
\end{equation}
We refer to \cite{S-EA} for the proof. 

Note that the \ref{Cg}-protocols are local in the sense of \defin{d:loc} provided the defining kernel itself is local \eqref{e:locf}.

Most of technical issues related to analysis of \eqref{e:FPA} based on protocols like \ref{Cg} are attributed to the lack to symmetry (or even conservative property).  There are modifications to these models that retain the same scaling in terms of density while making the kernels symmetric and even positive semi-definite. One such modification was proposed and studied in \cite{S-hypo,S-EA}:
\begin{equation}\label{Cg+}\tag{$\cC_{\g}^+$}
\phi_\rho(x,y) = \int_{\O} \frac{\psi(x-z)\psi(y-z)}{(\rho \ast \psi(z))^\g}\dz, \quad \th_\rho(x)  = \inf_{z \in \supp \psi(x-\cdot)} \rho \ast \psi(z),
\end{equation}
where $\psi\in C^\infty(\O)$ is a local kernel with $\int \psi \dx = 1$. The corresponding communication map is given by the overmollified Favre filtration $\rmw_\rho(u) =\left(\frac{(u\rho)\ast \psi}{(\rho \ast \psi)^\g}\right)\ast \psi$. The strength is given by $\st_\rho = ((\rho \ast \psi)^{1-\g})\ast \psi$, and all the axioms of a local regular protocol are rather obvious. We call such protocols {\em Bochner-positive}, by analogy with the classical Bochner and Mercer's Theorems, because they define positive semi-definite communication operators:
\[
(u , \rmw_\rho(u))_\rho = \int_\O \frac{((u\rho) \ast \psi)^2}{(\rho \ast \psi)^\g} \dx \geq 0.
\]

Another symmetric variant of \ref{Cg} can be defined as follows
\begin{equation}\label{Cg/2}
\phi_\rho(x,y) =  \frac{ \phi(x-y) }{ (\rho \ast \phi(x))^{\frac{\g}{2}} (\rho \ast \phi(y))^{\frac{\g}{2}} }, \quad  \th_\rho(x)  = \inf_{y \in \supp \phi(x-\cdot)} \rho \ast \phi(y). \tag{$\cC_{\frac{\g}{2},\frac{\g}{2}}$}
\end{equation}
Here as always we assume that $\phi$ is local. It implies that first, $x \in \supp \phi(x-\cdot)$ and so we capture all the support of the denominator in the verification of the regularity axioms; and second, that \ref{Cg/2} is a local protocol.

Another distinctly different and non-Galilean invariant protocol was introduced in \cite{S-EA} as a consensus model in segregated neighborhoods.  Consider any smooth partition of unity $g_l \in C^\infty(\O)$, $g_l \geq 0$, and $\sum_{l=1}^L g_l = 1$ subordinated to an open cover $\{ \cO_l \}_{l=1}^L$ of $\O$,  so that $\supp g_l \ss \cO_l$. For a $\rho\in \cP(\O)$ we denote $\rho(g_l) = \int_\O g_l \drho$. One defines the model by   
\begin{equation}\label{Segg}
\phi_\rho(x,y) =  \sum_{l = 1}^L \frac{ g_l(x) g_l(y)}{(\rho(g_l))^\g}, \quad  \th_\rho(x) = \min_{l: x\in \supp g_l} \rho(g_l).
 \tag{Seg$_\g$}
\end{equation}
Here, the strength is given by $ \st_\rho(x) = \sum_{l=1 }^L g_l(x) \rho^{1-\g}(g_l)$ and since the model is symmetric, condition \ref{i:s2} is automatically verified. To check locality, let us observe that for any point $x$ where $\th_\rho(x)>0$, we have $\rho(g_l)>0$ whenever $x\in \supp g_l$. Since the sum in \ref{Segg} is taken only over such indexes $l$, and clearly $\rho(g_l)\leq 1$, we have 
\[
\phi_\rho(x,y) \geq   \sum_{l : x\in \supp g_l} g_l(x) g_l(y)
\]
Since $g_l$ is a partition of unity there is at least one $l$ for which $g_l(x) \geq \frac1L$. By uniform continuity, there is a radius $r_0>0$ such that $g_l(y) > \frac{1}{2L}$ for all $|x-y|<r_0$, which implies \eqref{e:loc} as desired.

\subsection{Mapping properties}

Communication protocols give rise to families of integral operators such as $\rmw_\rho$  defined in \eqref{e:warep} and its averaged version, called in \cite{S-EA} {\em environmental averaging}:
\begin{equation}\label{e:ave}
\ave{u}_\rho(x) =  \frac{\rmw_\rho(u)(x)}{\st_\rho(x)}.
\end{equation}
Both families are important in the study of alignment models, so let us discuss their immediate mapping properties.

Conditions \ref{i:s1}-\ref{i:s2} readily imply that the marginals \eqref{e:s} and \eqref{e:sadj} are uniformly bounded, which puts our kernels $\phi_\rho$ uniformly  into Schur's class. As a consequence of the classical Schur  theory this implies  $L^p$-boundedness:  for all $1\leq p \leq \infty$,
\begin{equation}\label{ }
\sup_{\rho \in \cP(\O)} \| \rmw_\rho\|_{L^p(\rho) \to L^p(\rho)} \leq C \oS.
\end{equation}

The mapping properties of environmental averages, on the other hand, are more naturally associated with the $L^p$-spaces relative to what we call a {\em strength-measure} defined by $\dk_\rho = \st_\rho \drho$, and the corresponding adjoint counterpart, $\dk^*_\rho = \st^*_\rho \drho$. 
\begin{lemma}\label{l:k-to-k*}
For any $1\leq p\leq \infty$, any $\rho \in \cP(\O)$, and $u\in L^p(\k^*_\rho)$ we have
\begin{equation}\label{e:k-to-k*}
\|[u]_\rho\|_{L^p(\k_\rho)} \leq \|u\|_{L^p(\k^*_\rho)}.
\end{equation}
\end{lemma}
\begin{proof}
Indeed, for $p=\infty$ it is obvious from the defining formula \eqref{e:ave} itself. For $p<\infty$, by  the \HI,
\begin{equation*}\label{}
\begin{split}
\| [u]_\rho \|_{L^p(\k_\rho)}^p =\int_\O \left| \int_\O u(y) \frac{\phi_\rho(x,y) \drho(y)}{\st_\rho(x)} \right|^p \st_\rho(x) \drho(x) & \leq \int_\O\int_\O |u(y)|^p \frac{\phi_\rho(x,y) \drho(y)}{\st_\rho(x)} \st_\rho(x) \drho(x) \\
& = \int_\O\int_\O |u(y)|^p \phi_\rho(x,y) \drho(y) \drho(x)\\
& = \int_\O\int_\O |u(y)|^p \st^*_\rho(y) \drho(y) = \| u\|_{ L^p(\k^*_\rho)}^p.
\end{split}
\end{equation*}
\end{proof}

In view of \ref{i:s2}, \eqref{e:k-to-k*}  implies that the averages map $L^p(\k_\rho) \to L^p(\k_\rho)$ uniformly,
\begin{equation}\label{e:LpLp}
\sup_{\rho \in \cP(\O)} \| \ave{u}_\rho\|_{L^p(\k_\rho)} \leq C \|u\|_{L^p(\k_\rho)}.
\end{equation}
Clearly, for conservative models, $C=1$, which implies the contraction mapping 
\begin{equation}\label{e:contraction}
\sup_{\rho \in \cP(\O)} \| \ave{u}_\rho\|_{L^p(\k_\rho)} \leq \|u\|_{L^p(\k_\rho)}.
\end{equation}
This has multiple fundamental implications which we discuss in next section.  

Coming back to the communication operator $\rmw_\rho$, we distinguish a special of class of protocols for which $\rmw_\rho(u)$ improves regularity from $L^2(\rho)$ to uniformly bounded, which is going to play a major role in the relaxation. 

\begin{definition}\label{d:type2}
We say that a protocol $\cC$ is of mapping type $(2,\infty)$ if there exists a $\oW>0$, such that 
\begin{equation}\label{e:type2}
\sup_{\rho \in \cP} \sup_{\|u\|_{L^2(\rho)} \leq 1} \sup_{x\in \O} |\rmw_\rho(u)(x)| \leq \oW.
\end{equation}
\end{definition}

Among $\g$-protocols, \ref{Cg} or \ref{Cg+}, those of type-$(2,\infty)$ correspond to $0 \leq \g \leq \frac12$. Indeed, it comes down to the estimate 
\[
\left| \frac{(u\rho)\ast \phi}{(\rho \ast \phi)^\g} \right| \leq \frac{((|u|^2\rho)\ast \phi)^{1/2} (\rho \ast \phi)^{1/2}}{(\rho \ast \phi)^\g} \leq \|u\|_2 (\rho \ast \phi)^{\frac12 - \g} \lesssim \|u\|_2.
\]
The computation is entirely similar for \ref{Segg}. For the \ref{Cg/2}-protocol, we have
\[
\rmw_\rho(u) = \frac{1}{ (\rho \ast \phi)^{\g/2}} \left( \frac{ u\rho }{ (\rho \ast \phi)^{\g/2} } \right) \ast \phi \leq \frac{\|u\|_2}{ (\rho \ast \phi)^{\g/2}} \sqrt{ \frac{\rho }{ (\rho \ast \phi)^{\g} }  \ast \phi },
\]
using \eqref{e:KMTb},
\[
\lesssim \|u\|_2 (\rho \ast \phi)^{\frac12 - \g} \lesssim \|u\|_2,
\]
provided $\g \leq \frac12$. Let us record our findings for future reference.

\begin{lemma}\label{ }
All the protocols  \ref{Cg}, \ref{Cg+}, \ref{Cg/2}, \ref{Segg} are of type-$(2,\infty)$ provided $0 \leq \g \leq \frac12$.
\end{lemma}

\subsection{Momentum, entropy, and $H$-theorem}\label{ss:en} 

For a probability density $f$ solving \eqref{e:FPA}, let us denote  the total momentum by $\bar{u} = \int_\domain v f(t,x,v)\dv\dx$. Generally $\bar{u}$ is a function of time solving
\begin{equation}\label{e:momentum}
\ddt \bar{u} = \int_\O (\st_\rho^*(x) - \st_\rho(x)) u(x) \rho(x)\dx.
\end{equation}
We can see that for conservative protocols \eqref{e:consdef}  the  momentum is conserved, $\bar{u}=\bar{u}_0$. 

Formally, the entropy of $f$ defined in \eqref{e:entropy} satisfies the equation
\begin{equation}\label{e:entropylaw}
\ddt  \cH = - \int_\domain \st_\rho \frac{|\s_0 \n_v f + v f |^2}{f} \dv\dx   +  (u,\ave{u}_\rho)_{\k_\rho}.
\end{equation}
It highlights two competing mechanisms of entropy depletion / production, where the first term -- Fisher information -- relaxes the solution towards a Maxwellian centered at a hydrostatic state $(\rho, \bar{u} = 0)$, while the alignment production term inhibits such relaxation.  This is clearly not a dissipative law for the entropy. If we rewrite it differently by incorporating macroscopic shift in the Fisher term, then the two mechanisms start to agree on the direction, at least for conservative models. Indeed, \eqref{e:entropylaw} is equivalent to  
\begin{equation}\label{e:elaw1}
\ddt \cH = -  \int_{\domain}   \st_\rho \frac{|\s_0 \n_v f + (v - u) f |^2}{f}  \dv\dx  - \|u\|_{L^2(\k_\rho)}^2 + (u,\ave{u}_\rho)_{\k_\rho}.
\end{equation}
If the protocol $\cC$ is conservative, then by the contraction property \eqref{e:contraction},  $ - \|u\|_{L^2(\k_\rho)}^2 + (u,\ave{u}_\rho)_{\k_\rho} \leq 0$, and we recover the classical  $H$-theorem.
\begin{lemma}[$H$-theorem]\label{e:H}
If $\cC$ is a conservative protocol, then the entropy $\cH$ is a Lyapunov function of the equation \eqref{e:FPA},
\begin{equation}\label{ }
\ddt \cH \leq 0.
\end{equation}
\end{lemma}

\subsection{Existence of weak solutions}
Let us  state our first main result -- global existence of weak solutions.
\begin{definition}\label{d:weak} 
We say that 
\begin{equation}\label{d:regclass}
\begin{split}
f  \in L^\infty([0,T);L^1_2) \cap C([0,T); \cD') 
\end{split}
\end{equation}
 is a weak solution to \eqref{e:FPA} with initial condition $f_0$ if for every $\f \in C_0^\infty([0,T) \times \O \times \R^n)$ one has
\begin{equation}\label{e:weak}
\begin{split}
 \int_{\domain} \f(t,x,v) f(t,x,v) \dv \dx - \int_0^t\int_{\domain} \p_t \f f \dv \dx \ds -  \int_{\domain} \f(0,x,v) f_0 \dv \dx \\
  - \int_0^t\int_{\domain} v \cdot \n_x \f  f \dv \dx \ds 
 = \int_0^t\int_{\domain} \s f \D_v \f \dv \dx \ds  + \int_0^t\int_{\domain} \cA(f) f  \cdot\n_v \f \dv \dx \ds.
\end{split}
\end{equation}
\end{definition}

To see that the  last term makes distributional sense, let us observe 
\begin{equation}\label{e:A}
\cA (f)  =   \rmw_\rho(u)  -  v \st_\rho.
\end{equation}
Clearly, $v \st_\rho f \in L^1$, while $\rmw_\rho(u) f = \rmw_\rho(u) \sqrt{f} \sqrt{f} \in L^2 \cdot L^2 \ss L^1$. So, the basic energy class is sufficient for all the terms in \eqref{e:weak} to be finite, however, for the purposes of obtaining weak solutions, specifically to enable the application of the Averaging Lemma,  further assumptions on initial data will be necessary such as finite higher moments and boundedness.

\begin{theorem}\label{t:weak}
Let $\cC$ be a local regular protocol.

\underline{Existence}: For any initial condition $f_0 \in L^\infty \cap L^1_q$, $q \geq 2$,  there exists a global weak solution to \eqref{e:FPA} in the class 
\begin{equation}\label{e:regweak}
\begin{split}
f  \in L^\infty([0,T);L^\infty \cap L^1_q) \cap C([0,T); \cD'), \\
 \st_\rho |\n_v f|^2 f^{p-2} \jap{v}^{q'} \in L^1([0,T) \times \domain),
 \end{split}
\end{equation}
 for any  $T>0$, where $q' = q$ if $p>1$, and $q'<q$ if $p=1$.  
 
\underline{Entropy}: Any weak solution in the above class has absolutely continuous entropy  in time $t\to \cH(t)$ satisfying the entropy equality \eqref{e:entropylaw} on $[0,\infty)$.

\underline{Renormalization}: any weak solution is automatically renormalized in the sense of DiPerna-Lions, see \prop{p:renorm}.

\underline{Uniqueness}: Let $f',f''$ be two weak solutions in class \eqref{e:regweak} with, $q \geq n+4$, starting from same non-vacuous data $\min \rho_0 >0$. Then $f' = f''$ on some interval $[0,t_0)$ and for as long as they both remain non-vacuous. 
\end{theorem}

\sect{s:exist} will be entirely devoted to the existence and uniqueness statements of the theorem. The renormalization is  established in \prop{p:renorm} in the classical sense and in broader sense to include Boltzmann entropy in \prop{p:b-law}. In particular, this implies the entropy law \eqref{e:entropylaw}.

\subsection{Gain of positivity and hypoellipticity}  The next series of results will detail properties of solutions constructed in  \thm{t:weak} under some additional assumptions on the protocol $\cC$. First, as a consequence of the hypoelliptic structure of the Fokker-Planck operator, solutions to \eqref{e:FPA} are expected to gain regularity instantly. In our situation this statement stumbles upon two issues -- a possibly degenerate diffusion coefficient $\st_\rho$, and the lack of regularity of the drift $\rmw_\rho$, which a priori belongs only to $L^2(\rho)$. For a local regular communication protocol $\cC$, both issues are resolved if the flock is known to be  thick $\th_\rho(\O)>0$ as readily follows from the definitions of \sect{ss:cp}. One way to achieve thickness is to establish spread of the support of $f$ that fills the entire domain $\domain$. Such a spread can be established through a gain of Gaussian tails at $v$-infinity and is a known effect for many kinetic models, see \cite{AZ2024,DesVill2000,FranPoli2006,GIMV2019,HST2020,Hormander1967,IMS2020,Kolm1934,Mouhot2005}. For flocking models of type \eqref{e:FPA} the result was established for classical solutions in \cite[Proposition 7.3]{S-EA}, and  we will extend it to the class of weak solutions constructed in \thm{t:weak}. 

\begin{proposition}\label{p:Gauss} Let $\cC$ be a local regular conservative protocol  of type $(2,\infty)$. 
Then for any given weak solution $f$ in class \eqref{e:regweak} and for any $t_0>0$, there exist constants $a,b>0$, which depend on the initial entropy $\cH_0$, $t_0$, and other fixed parameters of the model, such that
\begin{equation}\label{e:Gauss}
f(t,x,v) \geq b e^{-a |v|^2} , \qquad \forall (t,x,v) \in [t_0,\infty) \times \O \times\R^n .
\end{equation}
\end{proposition}

The main technical load of this proposition is already contained in the aforementioned \cite[Proposition 7.3]{S-EA}. However, in order to relax it to the class of weak solutions new tools are necessary including the renormalization and the Weak Comparison Principle we establish in \prop{p:renorm} and \cor{c:wmp}. Details will be presented in \sect{ss:propGauss}.

Clearly the gain of positivity implies that $\min \rho(t) >c_0$ for any $t>t_0$ where $c_0$ is independent of time.  In view of 
\ref{i:th3}, the flock becomes instantaneously thick, which implies uniform in time spacial regularity of the coefficients of \eqref{e:FPA} as well as uniform ellipticity 
\begin{equation}\label{e:unifell}
\s >c_1,
\end{equation}
due to locality \eqref{e:stbelow}.  This puts the model \eqref{e:FPA} within reach of global hypoellipticity methods, see \cite{Villani-hypo}, which we properly adapt in \sect{s:hypoell} to  drift-diffusion  models such as \eqref{e:FPA}. The following is a direct consequence of \prop{p:reg} and \prop{p:Gauss}.
\begin{proposition}\label{p:instantreg}
 Let $\cC$ be a local regular conservative protocol  of type $(2,\infty)$. 
 Then for any given weak solution $f$ in class \eqref{e:regweak} and for any $T>t_0>0$, and $m\in \N$ there exists a constant $C_{t_0,T,m}>0$ such that 
\begin{equation}\label{e:mqinstant}
\|f(t)\|_{H^m_q} + \|\p_t f(t)\|_{H^{m-1}_{q-3}} \leq C_{t_0,T,m}, \quad \forall t\in (t_0,T).
\end{equation}
\end{proposition}

\subsection{Relaxation for large data}  Let us address now the question of long time behavior when in addition to locality and type $(2,\infty)$ we assume that the protocol $\cC$ is symmetric \eqref{e:symm}.  The important feature of the estimate \eqref{e:Gauss} is that the coefficients $a,b$ emerge independent of time for $t>1$. As we noted above this implies that the flock gets uniformly thick with regard to any thickness, $\th_\rho$, including the native one associated with the protocol $\cC$ as well as the ball-thickness \eqref{e:ballth}. Consequently, according to the regularity assumption \ref{i:r1} the kernel $\phi_\rho$ becomes uniformly smooth for all $t>1$ which implies that the communication operator $\rmw_\rho$ is regularizing
\begin{equation}\label{e:H1L2}
\| \n \rmw_\rho(u) \|_{L^2(\rho)} \leq c_2 \| u\|_{L^2(\rho)}, \quad t>1.
\end{equation}
Due to locality  \eqref{e:stbelow}, we also have  \eqref{e:unifell} uniformly for all $t>1$. For $q>n+4$ the Sobolev norms  $H^m_q$ control Fisher information, according \cite[Lemma 1]{TV2000},
\begin{equation*}\label{}
\begin{split}
\cI & = \cI_{vv} + \cI_{xx}, \\
\cI_{vv} &=  \int_{\domain}  \frac{|\s_0 \n_v f + v f |^2}{f}  \dv\dx, \quad \cI_{xx} = \s_0 \int_\domain \frac{|\n_x f|^2}{f}  \dv \dx
\end{split}
\end{equation*}
Therefore, for $t>1$ we can treat the solution as classical and perform the hypocoercivity analysis of \cite[Proposition 8.1]{S-EA} to obtain relaxation to the global Maxwellian \eqref{e:Max}. In order to apply that result three conditions must be met uniformly in time:  the regularization of the communication map \eqref{e:H1L2}, the uniform bound from the below on diffusion  \eqref{e:unifell}, and lastly, a uniform in time spectral gap condition
\begin{equation}\label{e:sgap}
 \|u\|_{L^2(\k_\rho)}^2 - (u,\ave{u}_\rho)_{\k_\rho} \geq \e_0 \|u\|_{L^2(\k_\rho)}^2,
\end{equation}
for all $t>1$ and some $\e_0 >0$. The latter implies that the entropy depletion due to the alignment, given in \eqref{e:elaw1} takes a coercive form,
which combined with \eqref{e:entropylaw} implies
\begin{equation}\label{e:HIvv}
\ddt  \cH(f | \mu_{\s_0,\bar{u}}) \lesssim  - \cI_{vv}  -  \|u\|_{L^2(\k_\rho)}^2,
\end{equation}
where 
\[
\cH(f | \mu_{\s_0,\bar{u}}) = \int_\domain f \log \frac{f}{\mu_{\s_0,\bar{u}}} \dv\dx
\]
is the relative entropy -- a functional that  differs from $\cH$ by a constant. 
Since $ \cI_{vv} $ is not a full Fisher information it does not control the relative entropy $ \cH(f | \mu_{\s_0,\bar{u}})$ as is the case with the full information via the log-Sobolev inequality
\begin{equation}\label{}
\cH(f | \mu_{\s_0,\bar{u}}) \leq C \cI.
\end{equation}
So, in order to close the entropic inequality \eqref{e:HIvv} one has to complement it with a series of equations on each component of the Fisher information including the mixed one:
\[
\cI_{xv} =  \int_{\domain}  \frac{(\s_0 \n_v f + v f ) \cdot \n_x f}{f}  \dv\dx.
\]
Thanks to special cancellations that occur in the alignment force one can control all the residual terms in the computation and obtain a differential inequality of type
\begin{equation}\label{e:HILyap}
\ddt  [ \cH(f | \mu_{\s_0,\bar{u}}) + a \cI_{vv} + b \cI_{xv} + c \cI_{xx} ] \lesssim - \cI,
\end{equation}
for an appropriate choice of $a,b,c>0$ in dependent of time, we refer to \cite{S-EA} for details. This implies exponential relaxation. Since the relative entropy controls the $L^1$-norm by the\CK 
\[
\|f(t) - \mu_{\s_0,\bar{u}}\|^2_{L^1(\domain)}  \lesssim \cH(f | \mu_{\s_0,\bar{u}}),
\]
the convergence in $L^1$ metric follows as well. So, we obtain
\begin{equation}\label{e:relaxexp}
\|f(t) - \mu_{\s_0,\bar{u}}\|_{L^1(\domain)} \leq c_3 \sqrt{\cI(f_0)}\ e^{- c_4 t},
\end{equation}
where $c_3,c_4>0$ depend on the initial entropy $\cH_0$ (as defined in \eqref{e:entropy})  as well as fixed parameters of the model such as $\s_0, C_k, \oS, \oW$, etc. 

It remains to verify the spectral gap condition \eqref{e:sgap}. In the following lemma we obtain an estimate $\e_0$ in terms of minimum of the density. 
\begin{lemma}\label{l:spgap}
Suppose $\cC$ is a local regular symmetric protocol. Then
\begin{equation}\label{e:spgapsym}
\e_0 \geq c_5 (\min \rho )^4,
\end{equation}
for some $c_5>0$ depending only on the fixed parameters of the protocol. 
\end{lemma}
The proof also gives an estimate in terms of ball-thickness $\th_{\rho,r_1}$, for some $r_1>0$, depending on other parameters of the model, but it will not be needed thanks to the already available uniform no-vacuum condition. 

\begin{proof} We may assume that   $\th_\rho(\O) >0$, for otherwise $\min \rho = 0$ by \ref{i:th2}, and there is nothing to prove. In this case we have the locality bound on the kernel \eqref{e:loc} available on the entire domain $\O$. So,
using symmetry and \eqref{e:loc},  we have
\begin{equation*}\label{}
\begin{split}
( u, u)_{\kappa_\rho} - ( u , [u]_\rho)_{\kappa_\rho}&  = \frac12  \int_{\O\times \O} \phi_\rho(x,y) | u(y) - u(x) |^2 \rho(y) \rho(x) \dy \dx \\
& \gtrsim   \int_{|x-y| <r_0}  | u(y) - u(x) |^2 \rho(y) \rho(x) \dy \dx.
\end{split}
\end{equation*}
Let us fix a local radially symmetric kernel $\psi \geq 0$ supported on the ball of radius $r_0/2$. Then $\psi \ast \psi \leq \one_{|r| <r_0}$. Consequently, we obtain
\[
( u, u)_{\kappa_\rho} - ( u , [u]_\rho)_{\kappa_\rho}  \gtrsim \frac12  \int_{\O\times \O}\psi \ast \psi(x-y)  | u(y) - u(x) |^2 \rho(y) \rho(x) \dy \dx.
\]
The latter is equal to the difference of inner products $ ( u, u)_{\kappa_\rho} - ( u , [u]_\rho)_{\kappa_\rho}$ corresponding to the Bochner-positive Cucker-Smale model \ref{CS} with kernel $\phi = \psi \ast \psi$. According to \cite[Proposition 4.16]{S-EA} the spectral gap for such protocols is estimated at $\e \sim \th_{\rho,r'}^3$ for some $r'>0$, which in turn is bounded from below by $\sim (\min \rho)^3$. So, we continue
\[
( u, u)_{\kappa_\rho} - ( u , [u]_\rho)_{\kappa_\rho}  \geq c (\min \rho)^3 \int_\O  |u(x)|^2  \rho \ast \psi \ast \psi(x) \rho(x) \dx\geq c (\min \rho)^4 \| u\|^2_{L^2(\rho)} \geq \frac{c} {\oS^{2}} (\min \rho)^4 \| u\|^2_{L^2(\k_\rho)}.
\]
\end{proof}

We now have all the ingredients for the direct application of \cite[Proposition 8.1]{S-EA} to state the general relaxation result for large data.

\begin{proposition}\label{p:relax}
Let $\cC$ be a local regular symmetric protocol of type $(2,\infty)$. Then any weak solution $f$ to \eqref{e:FPA} in class \eqref{e:regweak}, for $q > n+4$, relaxes to the global Maxwellian exponentially fast
\begin{equation}\label{}
\|f(t) - \mu_{\s_0,\bar{u}}\|_{L^1(\domain)} \leq c \sqrt{\cI(f_0)}\ e^{- c t},
\end{equation}
for some $c>0$ depending on the initial entropy $\cH_0$ and the fixed parameters of the protocol.
\end{proposition}

As we noted in the introduction, these results imply all the statements of \thm{t:mainrelaxCS} as it pertains to the Cucker-Smale protocol. But the exact same statement applies to cover all symmetric local protocols of type $(2,\infty)$, which include  \ref{Cg+}, \ref{Cg/2}, and \ref{Segg} for $0 \leq \g \leq \frac12$, i.e. half-way between the Cucker-Smale and Motsch-Tadmor scaling.

\section{Construction of Weak solutions}\label{s:exist}
This section is devoted to proving the existence and uniqueness components of \thm{t:weak}. From here on we assume that $\s_0 =1$ for notational brevity. 

We also show that every weak solution satisfies the energy equality. Namely, the energy $\cE(t)$, see \eqref{e:energy},  is an absolutely continuous in time function satisfying
\begin{equation}\label{e:eneq}
\ddt \cE = -  \int_{\domain} \st_\rho |v|^2 f \dv \dx + n  \int_{\domain} \st_\rho f  \dv \dx + (\ave{u}_\rho, u)_{\k_\rho}. 
\end{equation}
 This will one part of the entropy law \eqref{e:entropylaw} that will be fully established after \prop{p:b-law}.

\subsection{A priori estimates} Let us assume that we have a classical smooth solution $f$ to \eqref{e:FPA}. Any such solution has an a priori bounded $L^\infty$-norm. Indeed, evaluated at a point of maximum $(x,v)$, we have
$\n_v (f \cA(f)) = f \n_v \cA(f) = n f \st_\rho$, which is bounded by $n \oS \|f\|_\infty$. As to the dissipation, at a point of maximum, $\st_\rho \D_v f \leq c \|f\|_\infty$. So, 
\[
\ddt \|f\|_\infty \leq c \|f\|_\infty.
\]

Now let us consider the moments
\[
j_{p,q} = \int_\domain f^p \jap{v}^q \dv\dx.
\]

Writing the equation for the basic energy moment $j_{1,2}$ we easily obtain the following inequality 
\[
\ddt j_{1,2} \leq j_{1,2} + \jap{\bar{u}}^2.
\]
Let us  pair it with the momentum equation \eqref{e:momentum}, which thanks to \ref{i:s1}-\ref{i:s2} satisfies the bound 
\begin{equation}\label{ }
\ddt \bar{u} = \int_{\O} (\st^*_\rho(x) - \st_\rho(x)) u(t,x) \rho(t,x) \dx \leq C \sqrt{j_{1,2}}.
\end{equation}
 So,
\[
\ddt |\bar{u} |^2 \lesssim j_{1,2} + \jap{\bar{u}}^2.
\]
Putting the two together,
\[
\ddt (j_{1,2} + |\bar{u} |^2) \lesssim j_{1,2} + |\bar{u}|^2 + C,
\]
which makes both quantities bounded on for any finite time interval $[0,T)$:
\begin{equation}\label{e:j12}
j_{1,2} + |\bar{u} |^2 \leq C_T, \quad t<T.
\end{equation}

Let us now examine higher moments by testing the equation with $f^{p-1} \jap{v}^q$. Every estimate below is done up to a constant multiple.  So, for the time derivative we get $\ddt j_{p,q}$ as expected, and the free transport term vanishes. The dissipation term becomes
\begin{equation}\label{e:dissapriori}
\begin{split}
&\int_\domain \st_\rho \D_v f f^{p-1} \jap{v}^q \dv\dx  = - \int_\domain \st_\rho \n_v f \n_v(f^{p-1} \jap{v}^q) \dv\dx \\
& =  - (p-1) \int_\domain \st_\rho |\n_v f|^2 f^{p-2} \jap{v}^q \dv\dx -  \int_\domain  \st_\rho \n_v f f^{p-1} \n_v(\jap{v}^q) \dv\dx\\
& = - (p-1) \int_\domain \st_\rho |\n_v f|^2 f^{p-2} \jap{v}^q \dv\dx+ \frac{1}{p} \int_\domain  \st_\rho  f^{p} \D_v\jap{v}^q\dv\dx\\
& \lesssim -  (p-1) \int_\domain \s |\n_v f|^2 f^{p-2} \jap{v}^q \dv\dx + j_{p,q}.
\end{split}
\end{equation}

For the alignment term, using the representation \eqref{e:A}, we have
\begin{equation*}\label{}
\begin{split}
&\int_\domain f \cA(f) \n_v(f^{p-1} \jap{v}^q) \dv\dx\\
 & = (p-1) \int_\domain f (\rmw_\rho(u) - v \st_\rho ) \n_vf  f^{p-2} \jap{v}^q \dv\dx  + \int_\domain f (\rmw_\rho(u) - v \st_\rho )  f^{p-1} \n_v (\jap{v}^q) \dv\dx \\
& = \frac{p-1}{p} \int_\domain  (\rmw_\rho(u) - v \st_\rho ) \n_v(f^p) \jap{v}^q \dv\dx +  \int_\domain (\rmw_\rho(u) - v \st_\rho )  f^p \n_v (\jap{v}^q) \dv\dx\\
& = - \frac{p-1}{p} \int_\domain \n_v (\rmw_\rho(u) - v \st_\rho ) f^p \jap{v}^q \dv\dx +\int_\domain  (\rmw_\rho(u) - v \st_\rho ) f^p \n_v(\jap{v}^q) \dv\dx \\
& =  n \frac{p-1}{p} \int_\domain  \st_\rho  f^p \jap{v}^q \dv\dx +\int_\domain  (\rmw_\rho(u) - v \st_\rho ) f^p \n_v(\jap{v}^q) \dv\dx \\
& \lesssim  j_{p,q} + q \int_\domain  |\rmw_\rho(u)| f^p \jap{v}^{q-1} \dv\dx.
\end{split}
\end{equation*}

If $p=1$,  we obtain 
\begin{equation*}\label{}
\begin{split}
\int_\domain  |\rmw_\rho(u)| f \jap{v}^{q-1} \dv\dx& \leq \left( \int_\domain  |\rmw_\rho(u)|^q f \dv\dx \right)^{1/q} \left( \int_\domain   f \jap{v}^{q} \dv\dx \right)^{1/q^*}  \\
& \leq j_{1,q}^{1/q^*}  \| \rmw_\rho(u) \|_{L^q(\rho)} \leq c  j_{1,q}^{1/q^*}  \|u  \|_{L^q(\rho)} \leq c  j_{1,q}^{1/q^*}  j_{1,q}^{1/q} = c  j_{1,q}. 
\end{split}
\end{equation*}
In this case all the estimates close on the momentum, and we obtain
\[
\ddt j_{1,q} \leq c_1 j_{1,q},
\]
which implies a priori  bound on $ j_{1,q}$ if the initial momentum is finite. However, in this case we lose the dissipation term from \eqref{e:dissapriori} entirely. So,  coming back to the $j_{p,q}$-momentum, for $p>1$, and with the a priori bounds on $\|f\|_\infty$ and $j_{1,q}$ at hand, we have 
\begin{equation}\label{e:wpq}
\int_\domain  |\rmw_\rho(u)| f^p \jap{v}^{q-1} \dv\dx \leq \|f\|_\infty^{p-1}  j_{1,q} <C.
\end{equation}
The estimates close on the $j_{p,q}$-momentum,
\[
\ddt j_{p,q} \leq c_1 j_{p,q} + c_2 - c_3 \int_\domain  \st_\rho |\n_v f|^2 f^{p-2} \jap{v}^q \dv\dx,
\]
and we obtain  another  a priori bound on the weighted Fisher information,
\begin{equation}\label{ }
\int_0^T \int_\domain  \st_\rho |\n_v f|^2 f^{p-2} \jap{v}^q \dv\dx <C.
\end{equation}

To recover the classical Fisher information corresponding  to $p=1$ we use the entropy equation. But first let us establish  a general bound on weighted $L\log L$-norms. 

\begin{lemma}\label{l:LlogL} If $f\in L^1_q \cap L^p$, $p>1$, then for any 
$1\leq r <\infty$ and $0\leq \g < q$, we have 
\begin{equation}\label{ }
\int_\domain \jap{v}^\g f |\log f|^r \dv\dx \leq c'_{\g,p,q,r} + c''_{\g,p,q,r} \|f\|_{L^1_q} + c'''_{\g,p,q,r} \|f\|_{L^p}.
\end{equation}
\end{lemma}
\begin{proof}
Indeed, for $\epsilon = (p-1) \frac{\g}{q}$, we have
\[
\jap{v}^\g \one_{f \geq 1} f \log^r f   \lesssim \jap{v}^\g f^{1+ \epsilon} = \jap{v}^q f+ f^{(1+\epsilon - \frac{q-\g}{q}) \frac{q}{\g}} =  \jap{v}^q f + f^{p},
\]
while
\begin{multline*}
\jap{v}^\g \one_{f \leq 1}  f (-\log f)^r  = \jap{v}^\g \one_{f \leq \exp(-\jap{v}^{\frac{q-\g}{r}})} f (-\log f)^r + \jap{v}^\g \one_{\exp(-\jap{v}^{\frac{q-\g}{r}}) < f \leq 1} f (-\log f)^r \\
  \leq c \jap{v}^\g \one_{f \leq \exp(-\jap{v}^{\frac{q-\g}{r}})} \sqrt{f} +   f \jap{v}^q   \leq  \jap{v}^\g  \exp(-\jap{v}^{\frac{q-\g}{2r}}) +   f \jap{v}^q.
\end{multline*}
\end{proof}

Testing \eqref{e:FPA} with $(\log f + 1)\jap{v}^{q'}$, for $q'<q$,
\begin{equation*}\label{}
\begin{split}
\ddt \int_\domain f \log f \jap{v}^{q'} \dv \dx = &-  \int_\domain \st_\rho \frac{|\n_v f|^2}{f} \jap{v}^{q'} \dv\dx  - \int_\domain  \st_\rho \n_v f \n_v \jap{v}^{q'} (\log f +1)\dv\dx\\
& + \int_\domain \cA(f) \n_v f \jap{v}^{q'} \dv\dx .
\end{split}
\end{equation*}
We have by \lem{l:LlogL},
\begin{equation*}\label{}
\begin{split}
- \int_\domain \st_\rho \n_v f \n_v \jap{v}^{q'} (\log f +1)\dv\dx & = \int_\domain \st_\rho f  \D_v \jap{v}^{q'} (\log f +1)\dv\dx + \int_\domain \st_\rho   \n_v \jap{v}^{q'} \n_v f \dv\dx \\
& \lesssim j_{1,q} +  \int_\domain f |\log f| \jap{v}^{q'} \dv\dx - \int_\domain  \st_\rho   \D_v \jap{v}^{q'}  f \dv\dx\\
& \lesssim j_{1,q} + \|f\|_p \lesssim j_{1,q} + \|f\|_1 + \|f\|_\infty \leq C.
\end{split}
\end{equation*}

As to the alignment term,  we obtain
\[
\int_\domain \cA(f) \n_v f \jap{v}^{q'} \dv\dx = -\int_\domain \rmw_\rho(u) \n_v  \jap{v}^{q'} f\dv\dx + \int_\domain \st_\rho f \n_v(v  \jap{v}^{q'})\dv\dx ,
\]
and since $| \n_v  \jap{v}^{q'}| \leq \jap{v}^{q-1}$, by \eqref{e:wpq}, this expression is bounded.
So, we obtain the following equation on the weighted entropy
\[
\ddt \int_\domain f \log f \jap{v}^{q'} \dv \dx + \int_\domain \st_\rho \frac{|\n_v f|^2}{f} \jap{v}^{q'} \dv\dx \leq C.
\]
Again, by \lem{l:LlogL}, $\int_\domain f \log f \jap{v}^{q'} \dv \dx\in L^\infty_t$, so integrating the above we conclude 
\begin{equation}\label{e:Fisherapriori}
\int_0^T  \int_\domain \st_\rho \frac{|\n_v f|^2}{f}  \jap{v}^{q'}\dv\dx \dt \leq C \|f\|_{L^\infty([0,T); L^1_q \cap L^\infty)}+ C.
\end{equation}

Let us now turn to a priori estimates on the forces to make them ready for the application of the averaging lemma. Since  $q >2$,
\begin{align} 
\int_\domain |\st_\rho f|^2 \dv\dx & \lesssim \int_\domain f^2  \dv\dx  = \|f\|_\infty j_{1,q}   \in L^\infty_t, \label{e:fdiss}\\
\int_\domain |f \cA(f)|^2 \dv\dx& \lesssim \int_\domain (|v|^2 + |\rmw_\rho(u)|^2) f^2 \dv\dx  \lesssim \|f\|_\infty (j_{1,2} + \|\rmw_\rho(u)\|_2^2) \label{e:fA} \\
& \lesssim  \|f\|_\infty (j_{1,2} + j_{1,2}) \in L^\infty_t \notag.
\end{align}
Let us summarize the obtained estimates for future reference.
\begin{lemma}\label{l:apriori} Solutions to the Fokker-Planck-Alignment equation \eqref{e:FPA} are a priori bounded in $L^1_q \cap L^\infty$ if initially so, and $ \s |\n_v f|^2 f^{p-2} \jap{v}^{q'} \in L^1_{t,x,v}$, for all $1 < p < \infty$ and $q'=q$, and $p=1$ with $q'<q$. We also have all the forces in $L^\infty_t L^2_{x,v}$.
\end{lemma}

Let us now recall a version of the averaging lemma that will be suitable for our purposes.
\begin{lemma}[\cite{Glassey}] \label{l:al}
Suppose $f$ solves to the following equation on $\O \times \R^n \times \R$
\begin{equation}\label{ }
\p_t f + v \cdot \n_x f = \sum_{l=0}^L \n_v^l G_l,
\end{equation}
where $f, G_l\in L^2_{t,x,v}$. Let $\f \in C^\infty_0( B_R)$, the macroscopic quantity
\[
\rho_{\f} (t,x) = \int_{\R^{n}} \f(v) f(t,x,v) \dv
\]
belongs to $H^{1/6}_{t,x}$ with
\[
\|  \rho_{\f} \|_{H^{1/6}_{t,x}} \leq C_R ( \|  f \|_{L^2_{t,x,v}} + \sum_l \| G_l \|_{L^2_{t,x,v}}).
\]
\end{lemma}

\subsection{Approximation scheme}

We now introduce a proper approximation scheme that  respects all the a priori estimates stated in the previous section, and at the same time is classically well-posed in higher regularity Sobolev spaces.

First we consider a smooth truncation of the mollified initial condition, for example
\[
f_0^\e(x,v) = \chi(x,\e v) f_0 \ast \chi_\e (x,v),
\]
where $\chi_\e$ is any standard smooth mollifier. 
Note that $\| f_0^\e \|_{L^p_q} \leq \| f_0 \|_{L^p_q}$, for any $p,q$, which means that $f_0^\e$ belong uniformly to those classes where $f_0$ initially belongs, and $f^\e_0 \in H^{M}_Q(\domain)$ for any $M,Q\in \N$.

Next, we remove vacuum from the density as a parameter of the kernel by elevating it slightly,
\[
\bar{\rho}^{\e}  = \frac{\rho^\e + \e}{1+\e} \in \cP(\O).
\]
We use this elevated density to approximate the kernel and corresponding operators
\[
\rmw^\e   = \int_\O \phi_{\bar{\rho}^{\e}}(x,y) u^\e(y) \rho^\e(y) \dy.
\]
Note that we only use the elevated density in the kernel, but not in the momentum $u^\e \rho^\e$ for the quantity $u^\e \bar{\rho}^\e$ does not make a distributional sense.  

Let us consider the following approximation
\begin{equation}\label{e:FPAee}
\p_t f^\e + v\cdot \n_x f^\e + \n_v \cdot (  f^\e (\rmw^\e - v \st_{\bar{\rho}^\e}) )  = \st_{\bar{\rho}^\e} \D_v f^\e.
\end{equation}
Observe that all the coefficients that appear in \eqref{e:FPAee} are infinitely smooth and bounded, and $\st_{\bar{\rho}^\e} \geq c_\e$ thanks to the locality \eqref{e:stbelow}, and \ref{i:th3}.
Note that the thickness of initial condition is not necessary here to prove local well-posendness in $H^M_Q$ as in \cite{S-EA} since thickness was only required to ensure sufficient regularity of the protocols, which in our case is provided by design. The a priori bounds stated in \lem{l:apriori}, including the energy, extend to the local solution of the approximated model \eqref{e:FPAee} directly, using that $u^\e \to \rmw^\e$ is bounded on $L^q(\rho^\e)$. Indeed,

\begin{equation}\label{e:wEnergy}
\| \rmw^\e\|^q_{L^q(\rho^\e)} = \int_{\O } \left| \int_{\O } \phi_{\bar{\rho}^{\e}}(x,y) u^\e(y) \rho^\e(y) \dy \right|^q  \rho^\e(x) \dx,
\end{equation}
using that 
\[
\int_{\O } \phi_{\bar{\rho}^{\e}}(x,y)\rho^\e(y) \dy \leq (1+\e) \int_{\O } \phi_{\bar{\rho}^{\e}}(x,y)\bar{\rho}^{\e}(y) \dy  = (1+\e) \st_{\bar{\rho}^{\e}}(x) \leq 2 \oS,
\]
and 
\[
\int_{\O } \phi_{\bar{\rho}^{\e}}(x,y)\rho^\e(x) \dx \leq (1+\e) \int_{\O } \phi_{\bar{\rho}^{\e}}(x,y)\bar{\rho}^{\e}(x) \dx  = (1+\e) \st^*_{\bar{\rho}^{\e}}(x) \leq C \oS.
\]
by the \HI,
\begin{equation}\label{e:L2e}
\| \rmw^\e\|^q_{L^q(\rho^\e)} \leq 2 \oS \int_{\O }  \int_{\O } \phi_{\bar{\rho}^{\e}}(x,y) |u^\e(y)|^2 \rho^\e(y) \dy   \rho^\e(x) \dx \lesssim  \| u^\e\|^q_{L^q(\rho^\e)} \leq j_{1,q}.
\end{equation}
 
As the moments propagate so does regularity of the coefficients on any finite time interval, and thus the solution can be extended indefinitely in $H^M_Q$, proving global wellposedness of the approximated model, again as in  \cite{S-EA}. At the same time, we have determined that the respective moments of the family  as stated in \lem{l:apriori} are preserved on any finite time interval uniformly in $\e$.  It follows from \eqref{e:fdiss}, \eqref{e:fA} that  all the forces are uniformly in $L^\infty([0,T);L^2_{x,v})$, so we have 
\[
\p_t f^\e \in L^\infty([0,T); H^{-2}(\domain)), \quad  f^\e \in L^\infty([0,T); L^2_q(\domain)),
\]
uniformly in $\e$, and also according to \eqref{e:Fisherapriori} $\st_{\bar{\rho}^\e}  \frac{|\n_v f^\e|^2}{f^\e} \in L^\infty_t L^1_{x,v}$ uniformly in $\e$.  Since $L^2_q \hookrightarrow H^{-1}$ compactly, and $H^{-1} \ss H^{-2}$, by the Aubin-Lions Lemma, $\{ f^\e \}_\e \ss C([0,T); H^{-1}(\domain))$ compactly. All the above implies that there exists a subsequence $f^{\e_n} \rightharpoonup f$  strongly in $C([0,T); H^{-1}(\domain))$ and also weakly in momenta space $L^\infty([0,T); L^1_q \cap L^\infty)$. Thus, the limiting function belongs to 
\begin{equation}\label{ }
 f \in L^\infty([0,T);L^1_q \cap L^\infty)\cap C([0,T); \cD').
\end{equation}
The Fisher regularity of the limiting solution is not essential at this point and will be discussed in \sect{ss:Fisher}.

In order to show that the limiting function is indeed a weak solution of \eqref{e:FPA} we need extra compactness provided by the averaging \lem{l:al}.  According to \eqref{e:fdiss} -- \eqref{e:fA} all the forces fulfill the assumptions of the lemma uniformly in $\e$. To make the solution defined on full time line $\R$  let us fix a time-cutoff function $h\in C^\infty_0(0,T)$, and rewrite \eqref{e:FPAee} as follows
\begin{equation}\label{}
\p_t( h f^\e )+ v\cdot \n_x (h f^\e)  =  \sum_l \n^l_v ( h G^\e_l) +  f^\e \p_t h, 
\end{equation}
Now, $h f^\e$ solves the equation on the entire time line $t\in \R$. 

Let $\f \in C^\infty_0( B_R)$, and denote
\[
\rho^\e_{\f} (t,x) = \int_{\R^{n}} \f(v) f^\e(t,x,v) \dv.
\]
Then by \lem{l:al}, $\| h \rho^\e_{\f} \|_{H^{1/6}_{t,x}} \leq C$ uniformly in $\e$.

Let us recall that for a subsequence $\e_n \to 0$ we already have $f^{\e_n} \rightharpoonup f$ weakly. By  compactness of embedding $H^{1/6}_0( [0,T] \times \O) \hookrightarrow L^2$ and uniqueness of the limit all the macroscopic quantities at hand converge in $L^2$-metric and for any test-function $\f\in C^\infty_0$. Specifically, let us denote for short
\[
f^n = f^{\e_n} , \quad \rho^n = \rho^{\e_n}, \quad \rho^n_\f = \rho^{\e_n}_\f, \quad \bar{\rho}^{n} = \bar{\rho}^{\e_n}, \quad \text{etc},
\]
We have
 \begin{equation}\label{e:macroL2}
\| h \rho_{\f}^n -  h \rho_{\f} \|_{L^2_{t,x}} \to 0,
\end{equation}
for any $h\in C_0^\infty(0,T)$.  
The convergence \eqref{e:macroL2} also extends to $[0,T)$ by observing that 
\[
\sup_{[0,T)} \| \rho_{\f}^n (t) \|_{L^2_{x}}+ \| \rho_{\f}(t) \|_{L^2_{x}} < C,
\]
and introducing a cut-off $h$ supported on $[\d/2, T+1)$ and $h = 1$ on $[\d,T]$,
\[
\int_0^T \| \rho_{\f}^n(t)  -  \rho_{\f} (t) \|^2_{L^2_{x}} \dt \leq \| h \rho_{\f}^n - h  \rho_{\f} \|^2_{L^2_{t,x}} + \int_0^\d \| \rho_{\f}^n(t)  -  \rho_{\f} (t) \|^2_{L^2_{x}} \dt \leq o(1) + C\d.
\]
Thus,
 \begin{equation}\label{e:macroL3}
\| \rho_{\f}^n -  \rho_{\f} \|_{L^2([0,T) \times \O)} \to 0.
\end{equation}
We also have trivial convergence of macroscopic quantities initially
 \begin{equation}\label{e:macroLini}
\| \rho_{\f}^n(0) -  \rho_{\f}(0) \|_{L^2(\O)} \to 0.
\end{equation}

Further consequences of  \eqref{e:macroL3} is the convergence of the macroscopic densities on $[0,T)$,
 \begin{equation}\label{e:macroL1}
\| \rho^n -  \rho \|_{L^1([0,T) \times \O)} \to 0.
\end{equation}
Indeed, denoting by $\f_R(v) = \f(v/R)$, where $R>0$ is large and $\f$ is a cut-off function, we have 
\begin{equation*}\label{}
\begin{split}
\| \rho^n -  \rho \|_{L^1([0,T) \times \O)} & \leq \| \rho^n_{\f_R} -  \rho_{\f_R} \|_{L^1([0,T) \times \O)} + \int_{(0,T) \times \O} \int_{|v| > R} |f^n - f| \dv \dx \dt  \\
& \leq  \| \rho^n_{\f_R} -  \rho_{\f_R} \|_{L^1([0,T) \times \O)} + \frac{1}{R^2} ( \|f^n\|_{L^\infty_t L^1_2} + \|f\|_{L^\infty_t L^1_2})
\end{split}
\end{equation*}
Sending $n\to \infty$ and then $R\to \infty$ proves the convergence.

Similarly,  we have convergence of momenta
 \begin{equation}\label{e:macromoment}
\| \rho^n u^n -  \rho u \|_{L^1([0,T) \times \O)}  \to 0.
\end{equation}
Indeed,  
\begin{equation*}\label{}
\begin{split}
\| \rho^n u^n -  \rho u \|_{L^1} & \leq \| \rho^n_{v \f_R} -  \rho_{v \f_R} \|_{L^1} + \int_{(0,T) \times \O} \int_{|v| > R} |v| |f^n - f| \dv \dx \dt  \\
& \leq  \| \rho^n_{v \f_R} -  \rho_{v \f_R} \|_{L^1} + \frac{1}{R}  ( \|f^n\|_{L^\infty_t L^1_2} + \|f\|_{L^\infty_t L^1_2})
\end{split}
\end{equation*}
and we conclude as before.

\subsection{Obtaining a weak solution}
It remains to demonstrate that $f$ is a weak solution on $[0,T) \times \O \times \R^n$.  To control the convergence we first establish auxiliary continuity properties of the protocol that follow directly from the regularity axioms \ref{i:r1}-\ref{i:r2}, and demonstrate that in an averaged sense those regularity conditions depend only on the thickness of one of the densities.

 \begin{lemma}\label{l:semi-unc}
For any regular protocol, we have
\begin{align}
|\p^k_x \st_\rho(x)| & \leq C_k(\th_\rho(x)), \label{e:sreg1}\\
\int_{\O} | \phi_{\rho}(x,y) - \phi_{\rho'}(x,y)| \drho'(y) & \leq C(\th_{\rho}(x)) W_1(\rho,\rho'), \label{e:semiF}\\
| \st_{\rho}(x)-\st_{\rho'}(x) | & \leq C(\th_{\rho}(x) \vee \th_{\rho'}(x) )W_1( \rho, \rho') .
\label{e:semiS}
\end{align}
\end{lemma}
 \begin{proof} \eqref{e:sreg1} is an immediate consequence of \ref{i:r1}. 
 
According to \ref{i:th4}, if $W_1(\rho,\rho')< \frac{\th_{\rho}(x)}{2C}$, then $\th_{\rho'}(x)> \frac12\th_{\rho}(x)$, and so \eqref{e:semiF} is a consequence of \ref{i:r2}.  If, however, $W_1(\rho,\rho') \geq  \frac{\th_{\rho}(x)}{2C}$, then
 \begin{multline*}
 \int_{\O} | \phi_{\rho}(x,y) - \phi_{\rho'}(x,y)| \drho'(y) \leq  \int_{\O}  \phi_{\rho'}(x,y) \drho'(y) + \int_\O \phi_{\rho}(x,y) \drho'(y) \leq  \st_{\rho'}(x) + C(\th_{\rho}(x)) \\
 \leq 2C \frac{\oS + C(\th_{\rho}(x))}{\th_{\rho}(x)}W_1(\rho,\rho') \leq C'(\th_{\rho}(x)) W_1(\rho,\rho') ,
\end{multline*}
 as desired. 
 
Let us verify \eqref{e:semiS}: by \eqref{e:semiF},
\begin{equation}\label{e:ssemi}
\begin{split}
\st_{\rho}(x)-\st_{\rho'}(x) & = \int_\O \phi_{\rho}(x,y) \drho(y) - \int_\O \phi_{\rho'}(x,y) \drho'(y) \\
&=\int_\O [ \phi_{\rho}(x,y)-\phi_{\rho'}(x,y)] \drho(y) + \int_\O \phi_{\rho'}(x,y) [\drho(y)- \drho'(y)]\\
& \leq C(\th_{\rho'}(x)) W_1( \rho, \rho' ) + \| \n_y \phi_{\rho'}(x,\cdot) \|_\infty W_1( \rho, \rho') \leq C(\th_{\rho'}(x)) W_1( \rho, \rho') .
\end{split}
\end{equation}
By symmetry we can switch $\th_{\rho'}(x)$ and $\th_{\rho}(x)$ and take the maximum.
 \end{proof}

By approximation we can assume that a test function  is given by $ \chi(t,x) \f(v) \in C_0^\infty([0,T) \times \O \times \R^n)$. Plugging it into \eqref{e:weak} we have

\begin{equation}\label{e:weakn}
\begin{split}
& \int_0^T\int_{\O}  \rho_{\f}^n(t,x) \p_t \chi(t, x) \dx \dt  +  \int_{\O} \rho^n_\f(0,x) \chi(0,x) \dx+ \int_0^T\int_{\O}  \rho_{v \f}^n(t,x) \cdot \n_x \chi(t,x) \dx \dt \\
 = - & \int_0^T\int_{\O} \st_{\bar{\rho}^n} \rho^n_{ \D_v\f}(t,x)  \chi(t,x) \dx \dt  + \int_0^T\int_{\O} \st_{\bar{\rho}^n}(x) \rho^n_{v \cdot \n \f}(t,x) \chi(t,x)  \dx \dt \\
  - &  \int_0^T\int_{\O} \rmw^n \cdot \rho^n_{\n \f}(t,x) \chi(t,x) \dx \dt : = I + II + III.
\end{split}
\end{equation}

Convergence of the left hand side is trivial. Let us focus on the right hand side.

The strategy will be to split analysis to thin and thick regions and to use regularity of the protocol available on the thick regions. To this end, by \ref{i:th2}, for any $\e>0$ we can find a $\d>0$ such that  $\int_0^T \int_{\th_\rho <2 \d} \rho \dx \dt <\e$. Then due to \eqref{e:macroL1} starting from some large $n$, we will also have $\int_0^T \int_{\th_\rho <2 \d} \rho^n \dx \dt <2 \e$.  Furthermore, 
\[
\int_0^T \int_{\th_{\bar{\rho}^n} <  \d} \rho \dx = \int_0^T \int_{ \{\th_{\bar{\rho}^n} <\d\} \cap \{\th_{\rho} <  2\d\} } \rho \dx +  \int_0^T \int_{\{\th_{\bar{\rho}^n} <\d\} \cap \{\th_{\rho} \geq  2 \d\} } \rho \dx.
\]
The first integral does not exceed $\e$, while for the second we have the measure of the underlying set is vanishing. Indeed, 
\[
 | \{\th_{\bar{\rho}^n} <\d\} \cap \{\th_{\rho} \geq  2 \d\}  |_{\O \times [0,T]} \leq \frac{1}{\d }\int_0^T \int_{\{\th_{\bar{\rho}^n} <\d\} \cap \{\th_{\rho} \geq  2 \d\} } | \th_{\bar{\rho}^n} - \th_{\rho} | \dx \dt
\]
which by \ref{i:th4} and \eqref{e:macroL1},
\[
\lesssim \frac{1}{\d } \int_0^T \int_\O  |\bar{\rho}^n - {\rho} | \dx \dt \to 0.
\]
Thus,
\[
\int_0^T \int_{\{\th_{\bar{\rho}^n} <\d\} \cap \{\th_{\rho} \geq  2 \d\} } \rho \dx \to 0.
\]
So, for $n$ large,
\begin{equation}\label{e:denssmall}
 \int_0^T \int_{ \{\th_{\bar{\rho}^n} <\d\} \cup \{\th_{\rho} <  \d\} } \rho \dx \leq 2\e.
\end{equation}
Due to \eqref{e:macroL1}, for $n$ large, we have a similar bound on $\rho^n$,
\begin{equation}\label{e:denssmalln}
 \int_0^T \int_{ \{\th_{\bar{\rho}^n} <\d\} \cup \{\th_{\rho} <  \d\} } \rho^n \dx \leq 3\e.
\end{equation}

Let us go back to \eqref{e:weakn}, the first two terms on the right hand side are similar. So, let us focus on the first one only. 
 Since we have a pointwise bound $|\rho_{\D_v\f}(t,x)| \leq c \rho(t,x)$, it implies $\int_0^T \int_{\th_\rho <\d} |\rho_{\D_v\f}(t,x)| \dx \dt <c \e$. We therefore, obtain by uniform boundedness of $\st$'s, and \eqref{e:denssmall} - \eqref{e:denssmalln},
 \begin{equation*}\label{}
\begin{split}
 \int_0^T\int_{\th_\rho <\d} | \st_{\rho} \rho_{\D_v\f}(t,x)  \chi(t,x) | \dx \dt & \leq c \e \\
 \int_0^T \int_{\th_\rho <\d} | \st_{\bar{\rho}^n}  \rho^n_{\D_v\f}(t,x)  \chi(t,x) | \dx \dt &\leq c \e,
\end{split}
\end{equation*}
 for $n$ large enough. Whereas on the complement, by \eqref{e:semiS} and \eqref{e:macroL3},
 \begin{equation*}\label{}
\begin{split}
 \int_0^T \int_{\th_\rho  \geq \d} | \st_{\bar{\rho}^n}  \rho^n_{\D_v\f}(t,x) - \st_{\rho} \rho_{\D_v\f}(t,x)||  \chi(t,x)| \dx \dt & \leq  \int_0^T \int_{\th_\rho  \geq \d} | \st_{\bar{\rho}^n} - \st_{\rho}| |\rho^n_{\D_v\f}(t,x)  \chi(t,x)| \dx \dt \\
 & + \oS \int_0^T \int_{\th_\rho  \geq \d} |\rho^n_{\D_v\f}(t,x) -\rho_{\D_v\f}(t,x)||  \chi(t,x)| \dx \dt \to 0.
\end{split}
\end{equation*}
 
Lastly, let us discuss the convergence
\begin{equation}\label{e:convw}
\int_0^T\int_{\O} \rmw^n \cdot \rho^n_{\n \f}(t,x) \chi(t,x) \dx \dt \to \int_0^T\int_{\O} \rmw_\rho(u) \cdot \rho_{\n \f}(t,x) \chi(t,x) \dx \dt.
\end{equation}
The strategy will be similar -- working on the thick and thin sets separately.

For those $n$'s we obtain due to uniform bound on the $L^2$-norm of the averaging, and \eqref{e:denssmall} - \eqref{e:denssmalln},
\begin{equation*}\label{}
\begin{split}
 \int_0^T \int_{ \{\th_{\bar{\rho}^n} <\d\} \cup \{\th_{\rho} <  \d\} } \rmw_\rho(u) \cdot \rho_{\n \f}(t,x) \chi(t,x) \dx \dt & \leq \| \rmw_\rho \|_{L^2(\rho)} \left( \int_0^T \int_{ \{\th_{\bar{\rho}^n} <\d\} \cup \{\th_{\rho} <  \d\} } \rho \dx \dt  \right)^{1/2} \lesssim \e^{1/2} \\
 \int_0^T \int_{ \{\th_{\bar{\rho}^n} <\d\} \cup \{\th_{\rho} <  \d\} } \rmw^n \cdot \rho^n_{\n \f}(t,x) \chi(t,x) \dx \dt & \leq \| \rmw^n \|_{L^2(\rho^n)} \left( \int_0^T \int_{ \{\th_{\bar{\rho}^n} <\d\} \cup \{\th_{\rho} <  \d\} }\rho^n \dx \dt  \right)^{1/2}\lesssim \e^{1/2}.
\end{split}
\end{equation*}
On the other hand, on the complement, we have
\begin{equation*}\label{}
\begin{split}
& \int_0^T\int_{ \{\th_{\bar{\rho}^n}  \geq \d\} \cap \{\th_{\rho}  \geq  \d\} } (\rmw^n  \cdot \rho^n_{\n \f}(t,x) - \rmw_\rho(u) \cdot \rho_{\n \f}(t,x) ) \chi(t,x) \dx \dt  \\
= &  \int_0^T\int_{ \{\th_{\bar{\rho}^n}  \geq \d\} \cap \{\th_{\rho}  \geq  \d\} } \int_\O [\phi_{\bar{\rho}^n}(x,y) - \phi_\rho(x,y)] u^n(y) \rho^n(y)\dy\rho^n_{\n \f}(t,x) \chi(t,x) \dx \dt \\
+ &  \int_0^T\int_{ \{\th_{\bar{\rho}^n}  \geq \d\} \cap \{\th_{\rho}  \geq  \d\} } \int_\O \phi_{\rho}(x,y)(u^n(y) \rho^n(y) - u(y)\rho(y)) \dy  \rho^n_{\n \f}(t,x) \chi(t,x) \dx \dt \\
+ &  \int_0^T\int_{ \{\th_{\bar{\rho}^n}  \geq \d\} \cap \{\th_{\rho}  \geq  \d\} } \int_\O \phi_{\rho}(x,y) u(y)\rho(y) \dy [ \rho^n_{\n \f}(t,x)-\rho_{\n \f}(t,x)] \chi(t,x) \dx \dt : = I + II + III.
\end{split}
\end{equation*}
In the last two integrals the kernel is uniformly bounded, and so the convergence $II, III \to 0$ follows immediately from  \eqref{e:macroL3} and \eqref{e:macromoment}. For $I$ we use the continuity of the kernel \ref{i:r2} to estimate
\[
|I| \lesssim \| u^n \|_{L^2(\rho^n)} \int_0^T \int_\O  |\bar{\rho}^n - {\rho} | \dx \dt \to 0.
\]
This establishes  the convergence \eqref{e:convw}.

\subsection{Fisher Information}\label{ss:Fisher}
Let us now turn to the Fisher regularity of the limiting solution.  We have a uniform bound from the approximation by \lem{l:apriori}, 
\begin{equation}\label{e:Fn}
  \int_0^T \int_\domain \st_{\bar{\rho}^n} |\n_v f^n |^2(f^n)^{p-2} \jap{v}^{q'} dv\dx \ds \leq C,
\end{equation}
where $q' = q$ if $p>1$, and $q'<q$ if $p=1$. It suffices to show that 
\begin{equation}\label{e:limitF0}
 \int_0^T \int_{\R^n}   \int_{\{ \th_\rho>0\} \cap \{ \th_{\rho,r_0}>0\} \ }   \st_\rho |\n_v f |^2 f^{p-2}  \jap{v}^{q'} \dv\dx \ds <\infty,
\end{equation}
where $r_0>0$ is coming from the locality assumption \eqref{e:loc}. Indeed,  on the set $ \{\th_\rho= 0\}\cup \{ \th_{\rho,r_0}=0\}$ we have $\rho =  0$ a.e. by \ref{i:th2}, which implies that $f(x,v) = 0$ a.e. on $ \{\th_\rho= 0\}\cup \{ \th_{\rho,r_0}=0\} \times \R^n$.  It is known that $\n_v f^{p/2} =0$ a.e. on that set too, and therefore the integral vanishes.

To prove \eqref{e:limitF0} we fix a $\d>0$, $R>0$, and prove instead
\begin{equation}\label{e:limitF0}
 \int_0^T \int_{|v|<R}   \int_{ \{\th_\rho>\d\} \cap \{ \th_{\rho,r_0}>\d\} }  \st_\rho  |\n_v f |^2 f^{p-2}  \jap{v}^{q'} \dv\dx \ds <\infty,
\end{equation}
uniformly in $\d$ and $R>0$.  But on the set $\{ \th_\rho>\d \}$ as we argued by \eqref{e:semiS} that  $\st_{\bar{\rho}^n} \to  \st_\rho$ uniformly. Since by the locality \eqref{e:stbelow}, $\st_\rho >c \d$ on $\{ \th_{\rho,r_0}>\d\}$ we conclude that starting from large $n$, $\st_{\bar{\rho}^n} >c\d/2$ uniformly on the set. This implies by \eqref{e:Fn} that $|\n_v f^n |^2 (f^n)^{p-2} \jap{v}^{q'}\in L^1$ uniformly on the set $\{\th_\rho>\d\} \cap \{ \th_{\rho,r_0}>\d\} $ for large $n$.  We now appeal to the classical lower semi-continuity of the Fisher information, see \cite{Villani-optimal}, to conclude that 
\[
 \int_0^T \int_{|v|<R}   \int_{ \{\th_\rho>\d\} \cap \{ \th_{\rho,r_0}>\d\} }  \st_\rho |\n_v f |^2 f^{p-2} \jap{v}^{q'}  \dv\dx \ds \leq C
 \]
 as desired. Passing to the limit as $\d \to 0$ and $R\to \infty$ we prove the result.

\subsection{Energy law} \label{s:enlaw}
Let us derive the energy law \eqref{e:eneq} from the weak formulation for a given weak solution in the regularity class \eqref{e:regweak}. Since we are only allowed to plug in test-functions with compact support to do that rigorously  we approximate $\frac12 |v|^2$ by a cut-off function $\f_R(v) = \f(v/R)$, where $\f =1$ on the ball $|v|<1$ and $\f\in C^\infty_0(|v|<2)$. Notice that in the diffusion term we can move one $\n_v$ back on $\st_\rho f$ since $\st_\rho \n_v f \in L^1$.  One obtains
\begin{equation}\label{e:enR}
\begin{split}
\frac12 \int_{\domain \times \{t\} } |v|^2 \f_R(v)  f(t,x,v) \dv \dx - \frac12 \int_{\domain  \times \{0\} } |v|^2 \f_R(v)  f_0 \dv \dx \\
 = \frac12 \int_0^t\int_{\domain} \st_\rho f \D_v (|v|^2 \f_R(v)) \dv \dx \ds  + \frac12 \int_0^t\int_{\domain} \cA(f) f  \cdot \n_v (|v|^2 \f_R(v)) \dv \dx \ds.
\end{split}
\end{equation}
Since the energy is finite for all $t$, we trivially have
\begin{equation*}\label{}
\begin{split}
\frac12 \int_{\domain \times \{t\} } |v|^2 \f_R(v)  f(t,x,v) \dv \dx&  \to \cE(t) \\
\frac12 \int_{\domain \times \{t\} } |v|^2 \f_R(v)  f(0,x,v) \dv \dx&  \to \cE(0) 
\end{split}
\end{equation*}
As to the diffusion term, we notice that $| \D_v (|v|^2 \f_R(v)) | \leq C$ uniformly in $R$, and $ \D_v (|v|^2 \f_R(v)) \to n $ pointwise, while $| \st_\rho f | \lesssim  f \in L^1$. So, by the dominated convergence,
\[
\frac12 \int_0^t\int_{\domain} \st_\rho f \D_v (|v|^2 \f_R(v)) \dv \dx \ds \to n  \int_0^t\int_{\domain} \st_\rho f  \dv \dx \ds.
\]
Lastly, the alignment term is given by
\begin{equation*}\label{}
\begin{split}
 \frac12 \int_0^t\int_{\domain} \cA(f) f  \cdot \n_v (|v|^2 \f_R(v)) \dv \dx \ds& = \int_0^t\int_{\domain} \cA(f) f  \cdot v \f_R(v) \dv \dx \ds \\
& + \frac12  \int_0^t\int_{\domain} \cA(f) f  \cdot  |v|^2 \n_v \f_R(v) \dv \dx \ds.
\end{split}
\end{equation*}
For the first integral we have $|\cA(f) f  \cdot v| \lesssim |v|^2 f + |\rmw_\rho(u)| |v| f$, and $ |\rmw_\rho(u)| |v| f \leq  |\rmw_\rho(u)|^2  f + |v|^2 f$, all in $L^1$. So, 
\begin{multline*}
\int_0^t\int_{\domain} \cA(f) f  \cdot v \f_R(v) \dv \dx \ds \to \int_0^t\int_{\domain} \cA(f) f  \cdot v \dv \dx \ds \\
=\int_0^t (\rmw_\rho(u), u)_\rho \ds -  \int_0^t\int_{\domain} \st_\rho |v|^2 f \dv \dx \ds.
\end{multline*}
And
\begin{equation*}\label{}
\begin{split}
\left|  \int_0^t\int_{\domain} \cA(f) f  \cdot  |v|^2 \n_v \f_R(v) \dv \dx \ds \right| \leq  \int_0^t\int_{R < |v|<2R} |v|^2 f +|\rmw_\rho(u)|  f   |v| \dv \dx \ds.
\end{split}
\end{equation*}
We already concluded that the function inside is integrable, therefore this term vanishes as $R\to \infty$.

We conclude that $t \to \cE(t)$ is  absolutely continuous  and 
\begin{equation}\label{ }
\ddt \cE =(\rmw_\rho(u), u)_\rho -  \int_{\domain} \st_\rho |v|^2 f \dv \dx + n  \int_{\domain} \st_\rho f  \dv \dx.
\end{equation}
Noting that $(\rmw_\rho(u), u)_\rho = (\ave{u}_\rho, u)_{\k_\rho}$ proves \eqref{e:eneq}.

\subsection{Uniqueness from non-vacuous data} Let us address the uniqueness part of  \thm{t:weak}.

As a consequence of \lem{l:contth} and \ref{i:th3}, both native and ball-thicknesses will remain positive 
\[
\th_{\rho'}(\O), \th_{\rho''}(\O), \th_{\rho',r_0}(\O), \th_{\rho'',r_0}(\O) \geq c_0
\]
 on some interval of time $[0,t^*)$.  We will prove the uniqueness on any such time interval. So, the first moment of time  uniqueness fails is necessarily when one of these thicknesses vanishes, which implies $\min \rho = 0$ at that time by \ref{i:th3}. So, as long as solutions remain non-vacuous, or even thick in the sense of both native and ball-thicknesses, they will remain the same. 

Thanks to the regularity and locality of the protocol $\cC$, the strength-functions $\st'= \st_{\rho'}$, $\st''= \st_{\rho''}$ remain $C^\infty$ and bounded from below, and the kernels $\phi', \phi''$ are also in $C^\infty_{x,y}$. So, the formal computations below are justified classically. 
For notational brevity we relabel $q \to q+2$, and assume that $q \geq n+2$, while knowing that the $(q+2)$-moments are bounded.

We have
\begin{equation*}\label{}
\begin{split}
\ddt \| g\|^2_{L^2_q}  = & - \int_\domain \st' |\n_v g|^2 \jap{v}^q  \dv \dx -  \int_\domain \st' g \n_v g \n_v \jap{v}^q  \dv \dx \\
&  - \int_\domain (\st'-\st'') \n_v f'' \cdot \n_v g \jap{v}^q  \dv \dx -  \int_\domain (\st'-\st'') \n_v f''  g \n_v \jap{v}^q  \dv \dx\\
& - \frac12 \int_\domain \st' v g^2 \n_v \jap{v}^q  \dv \dx \\
& - \int_\domain (\st'- \st'')v f'' \n_v g  \jap{v}^q \dv \dx - \int_\domain (\st'- \st'')v f'' g \n_v  \jap{v}^q \dv \dx\\
& - \int_\domain \rmw' \cdot \n_v \jap{v}^q g^2  \dv \dx -  \int_\domain ( \rmw' - \rmw'') \cdot \n_v  f'' g \jap{v}^q  \dv \dx
\end{split}
\end{equation*}

The first term gives diffusion supported from below $c \int_\domain |\n_v g|^2 \jap{v}^q  \dv \dx$. Therefore, the second term gets absorbed leaving $\| g\|^2_{L^2_q}$. For the third term we use \eqref{e:semiS}, 
\[
\int_\domain (\st'-\st'') \n_v f'' \cdot \n_v g \jap{v}^q  \dv \dx \lesssim \| \rho' - \rho''\|_1^2 \| \n_v f''\|^2_{L^2_q} + \e  \| \n_v g\|_{L^2_q}^2
\]
denoting by $L(t)$ a generic integrable in time function, we continue,
\[
\lesssim L \| \rho' - \rho''\|_1^2  + \e  \| \n_v g\|_{L^2_q}^2.
\]
Now, 
\begin{equation}\label{e:rhodiff}
 \| \rho' - \rho''\|_1 \leq \int_\domain |g|  \dv \dx\leq \int_\domain |g| \jap{v}^{q/2} \frac{ \dv \dx}{\jap{v}^{q/2}} \lesssim  \| g\|_{L^2_q}. 
\end{equation}
So, 
\[
\int_\domain (\st'-\st'') \n_v f'' \cdot \n_v g \jap{v}^q  \dv \dx \lesssim L \| g\|_{L^2_q}^2 +  \e  \| \n_v g\|_{L^2_q}^2.
\]

For the fourth term we obtain similarly,
\[
 \int_\domain (\st'-\st'') \n_v f''  g \n_v \jap{v}^q  \dv \dx \lesssim L \| g\|_{L^2_q}^2.
\]
The fifth is trivially bounded by $\lesssim  \| g\|_{L^2_q}^2$. For the sixth, we have
\begin{equation*}\label{}
\int_\domain (\st'- \st'')v f'' \n_v g  \jap{v}^q \dv \dx \leq \| \rho' - \rho''\|_1 \int_\domain \jap{v}^{1+q/2} f''  |\n_v g|  \jap{v}^{q/2} \dv \dx \lesssim \| g\|_{L^2_q}^2 \|f''\|_{L^2_{q+2}}  \lesssim \| g\|_{L^2_q}^2.
\end{equation*}
For the seventh, we trivially have $\lesssim \| g\|_{L^2_q}^2$. Finally, the eighth is bounded by $\lesssim \| g\|_{L^2_q}^2$ thanks to uniform boundedness of the weighted average $\rmw'$.  And the ninth term is bounded by
\[
 \int_\domain ( \rmw' - \rmw'') \cdot \n_v  f'' g \jap{v}^q  \dv \dx \leq \| \rmw' - \rmw''\|_\infty  \|\n_v f''\|_{L^2_{q}}\| g\|_{L^2_q} \leq L \| \rmw' - \rmw''\|_\infty \| g\|_{L^2_q}.
\]
We have
\[
\rmw' - \rmw'' = \int_\O \phi' (u' \rho' - u'' \rho'') \dy + \int_\O (\phi' - \phi'') u'' \rho'' \dy.
\]
The second integral is bounded by, using \ref{i:r2} and \eqref{e:rhodiff},
\[
\lesssim \| g\|_{L^2_q} \cE \lesssim \| g\|_{L^2_q}.
\]
And as to the first,
\[
\int_\T^n | u' \rho' - u'' \rho'' |  \dy \leq \int_\domain |v| |g|  \dv \dy \leq  \int_\domain \jap{v}^{q/2} |g|  \frac{\dv \dy}{\jap{v}^{q/2 - 1}}
\lesssim \| g\|_{L^2_q}.
\]
Thus,
\[
\| \rmw' - \rmw''\|_\infty \lesssim  \| g\|_{L^2_q}.
\]
We conclude that the ninth term is bounded by $L  \| g\|_{L^2_q}^2$.

Putting the estimates together we obtain
\[
\ddt \| g\|^2_{L^2_q} \leq L  \| g\|^2_{L^2_q},
\]
and the uniqueness follows.

\section{Renormalization}\label{s:renormFP}
 
In this section we discuss renormalization of weak solutions in the sense of  DiPerna-Lions, see \cite{DiPL1989}.  The renormalization  allows to justify equations for compositions with other sufficiently regular functions $\b : \R \to \R$, for example $f^p$, $f\log f$ or $(f - \chi)_+$. The utility of such compositions is broad -- from maximum and comparison principles to entropy law and propagation of higher moments.

For $\b \in C^2(\R)$,  we can formally write the equation for $\b(f)$ as follows
\begin{equation}\label{e:renorm}
\p_t \b(f)  + v \cdot \n_x \b(f) + \n_v( \cA(f) \b(f))  =   n \st_\rho (f \b'(f) - \b(f)) + \st_\rho (\D_v  \b(f) -  |\n_v f|^2 \b''(f)).
\end{equation}
Here, we encounter the first issue: $\cA(f) \b(f)$ is not a distribution unless $\b(0) =0$. Indeed, if say $\b \equiv 1$, then $\rmw_\rho(u)$ is not guaranteed to be integrable with respect to the Lebesgue measure (recall that $\rmw_\rho(u)\in L^2(\rho)$, i.e. relative to the density only). So, to make the renormalization well-defined, we add and subtract $\b(0)$:
\begin{multline}\label{e:renorm-0}
\p_t \b(f)  + v \cdot \n_x \b(f) + \n_v( \cA(f)( \b(f) - \b(0)))  =  n \st_\rho (f \b'(f) + \b(0) - \b(f)) \\
+ \st_\rho (\D_v  \b(f) -  |\n_v f|^2 \b''(f)).
\end{multline}
Since $| \b(f) - \b(0)| \lesssim f$, this formulation makes distributional sense.

The second issue becomes apparent if we recall the classical DiPerna-Lions approach to proving renormalization, particularly for the transport term. One attempts to reach \eqref{e:renorm-0} by mollifying the equation, which for the transport term results in $(\cA(f) f)_\e$. Testing with $\b'(f_\e)$, we attempt to use the classical chain rule. So, we add and subtract $\cA(f) f_\e$. The proof reduces to controlling the commutator $(\cA(f) f)_\e -\cA(f) f_\e$. This classical approach, however, fails from the start because the term  $\cA(f) f_\e$ does not make distributional sense.  This basic strategy still works if extra scrutiny is given by splitting the commutator estimates between bulk regions $\rho >\d$, and peripheral regions $\rho <\d$ where the flock is thin and the alignment force is small. 

The third issue is in order to control commutators the minimal regularity requirement on $f$  is  Fisher  $\n_v \sqrt{f} \in L^1$, which in our case comes with degenerate weight $\st_\rho$. So, to access the weighted Fisher regularity,  we implement an extra ``pre-renormalization" step by looking at the equation for a properly weighted  and mollified $f$.

\subsection{Main result} We first state and prove a classical version of renormalization valid for smooth $\b$'s before addressing the sharp version in the next subsection.

\begin{proposition}\label{p:renorm} Let $\cC$ be a local regular protocol. Then any weak solution $f$ to \eqref{e:FPA} in  class \eqref{e:regweak} is renormalized \eqref{e:renorm-0} for any $\b \in C^{2}$.
\end{proposition}

\begin{proof}

First step will be to pre-renormalize the equation with a weight $\w$ that is subordinate to $\s$ and that gives us access to the Fisher integrability of  $\s |\n_v \sqrt{f}|^2$, as opposed to simply $f$.  

Let us fix two small parameters $\e_1, \e_2>0$ and a standard compactly supported mollifier $\chi_\e\in C^\infty_0(\domain)$.  Denote  $\w  = \frac{\th_{\rho,r_0}}{\th_{\rho,r_0} + \e_2}$, 
 and observe that $\w \in C^{1,1}_{x,t} \cap L^\infty$, which is a sufficient regularity for $\w$ to use as a test-function. We observe that $\w f \in L^1 \cap L^\infty$ and $\w \n_v f \in L^2_{t,x,v}$, the latter following from the 
  locality \eqref{e:stbelow} and the Fisher regularity of $f$. The Fisher norm of course deteriorates as $\e_2 \to 0$, but for every fixed $\e_2$ it is finite.
   
 So, let us write the equation for $ (\w f)_{\e_1}$:
\begin{equation}\label{e:FPAmoll}
\p_t (\w f)_{\e_1} + (v \cdot \n_x(\w f))_{\e_1}  + ( \w \n_v(\cA(f) f ))_{\e_1} =   (\w \st_\rho \D_v f )_{\e_1} 
+ R_{\e_1, \e_2},
\end{equation}
where  $R$ is the residual term
\[
R_{\e_1, \e_2} = (\p_t \w f)_{\e_1} +( f v \cdot \n_x \w)_{\e_1} .
\]
Next, we write test with $\b'( (\w f)_{\e_1})$ and write the equation for $\b( (\w f)_{\e_1})$ by the classical chain rule:
\begin{multline}\label{e:e1e2}
\p_t  \b((\w f)_{\e_1})  + (v \cdot \n_x(\w f))_{\e_1} \b'( (\w f)_{\e_1}) +   ( \w \n_v(\cA(f) f ))_{\e_1}  \b'( (\w f)_{\e_1}) \\
=    (\w  \st_\rho \D_v f)_{\e_1}  \b'( (\w f)_{\e_1})
+ R_{\e_1, \e_2}  \b'( (\w f)_{\e_1}).
\end{multline}
This equation is transplation invariant  $\b \to \b + c$. So, we can assume without loss of generality that $\b(0) = 0$.

We will now take consecutive limits as $\e_1 \to 0$ and $\e_2 \to 0$, starting with $\e_1 \to 0$.

\noindent\underline{{\it Free transport.}} We have
\begin{equation*}\label{}
\begin{split}
\p_t  \b((\w f)_{\e_1})  + (v \cdot \n_x(\w f))_{\e_1} \b'( (\w f)_{\e_1}) & =(\p_t +  v \cdot \n_x) \b( (\w f)_{\e_1}) + C_{\e_1,\e_2}\b'( (\w f)_{\e_1}) \\
C_{\e_1,\e_2} & = (v \cdot \n_x(\w f))_{\e_1} - v \cdot \n_x(\w f)_{\e_1} .
\end{split}
\end{equation*}
 We keep in mind that $\w f \in L^1 \cap L^\infty$, then $(\w f)_{\e_1} \to \w f$ in any $L^p$, $p<\infty$, and up to a subsequence pointwise almost everywhere. This implies $\b((\w f)_{\e_1}) \to \b(\w f)$ in $L^1$, and as a consequence, 
\[
(\p_t +  v \cdot \n_x)  \b((\w f)_{\e_1}) \to (\p_t +  v \cdot \n_x) \b(\w f ) 
\]
distributionally. As to the commutator we have
\begin{equation}\label{e:C1}
\begin{split}
C_{\e_1,\e_2}  &= \int_\domain \w(x-y) f(x-y,v-w) w \n_y \f_{\e_1}(y,w) \dy \dw \\
&=  \int_\domain \w(x-\e_1 y) f(x-\e_1 y,v-\e_1 w) w \n_y \f(y,w) \dy \dw \\
&=  \int_\domain \w(x-\e_1 y) (f(x-\e_1 y,v-\e_1 w) - f(x,v)) w\n_y \f(y,w) \dy \dw.
\end{split}
\end{equation}
Since $w \n_y \f(y,w)$ has a compact support and $f(x-\e_1 y,v-\e_1 w) - f(x,v) \to 0$ in $L^1(\dv\dx)$, the convergence $C_{\e_1,\e_2}\to 0$ as $\e_1 \to 0$ in $L^1_\loc$ follows.

 So, for the free transport term we have
\[
\p_t  \b((\w f)_{\e_1})  + (v \cdot \n_x(\w f))_{\e_1} \b'( (\w f)_{\e_1}) \underset{\e_1 \to 0}{ \longrightarrow }(\p_t +  v \cdot \n_x)  \b(\w f).
\]
Now, to take the next limit as $\e_2 \to 0$, notice that pointwise, $\w \to \one_{\th_{\rho,r_0} >0}$. On the set $\{\th_{\rho,r_0} = 0\} \times \R^n$ the distribution $f$ vanishes almost everywhere. So, $f \w \to f$ pointwise almost everywhere, and hence distributionally,
\[
(\p_t +  v \cdot \n_x)  \b(\w f)  \underset{\e_2 \to 0}{  \longrightarrow } (\p_t +  v \cdot \n_x)  \b( f) .
\]

\noindent\underline{{\it Alignment term.}}  First, we have 
\[
\w \n_v(\cA(f) f ) = \n_v(\w \cA(f) f ).
\]
 Let us fix an arbitrary $\d>0$ and observe that
\[
\w \cA(f) f  = \w \cA(f) f \one_{\rho>0} = \w \cA(f) f \one_{\rho >\d} + \w \cA(f) f \one_{0< \rho \leq \d},
\]
almost everywhere, and hence as distributions. Now the crucial observation is that $\cA(f)  \one_{\rho >\d} \in L^2_\loc(\dx\dv)$. In other words, on the set where the density is bounded away from zero the alignment drift is in $L^2$ relative to the Lebesgue measure, not the density-weighted measure. Denote $g = \cA(f)  \one_{\rho >\d}$. Then since $g\in C^\infty_v \cap L^2$, and $\n_v( \w f) \in L^2$, by the classical approximation, we have the product rule
\[
\n_v( g \w f) = -n\st_\rho \w f \one_{\rho >\d}+ g \cdot \n_v (\w f).
\]
So, let us mollify,
\[
(\n_v( g \w f))_{\e_1}  \b'((\w f)_{\e_1}) = -n( \st_\rho \w f \one_{\rho >\d})_{\e_1} \b'((\w f)_{\e_1})+( g \cdot \n_v (\w f))_{\e_1}  \b'((\w f)_{\e_1}).
\]
The first term converges classically
\[
 -n( \st_\rho \w f \one_{\rho >\d})_{\e_1} \b'((\w f)_{\e_1}) \to  -n \st_\rho \w f \one_{\rho >\d} \b'(\w f),
\]
while the second 
\begin{equation*}\label{}
\begin{split}
( g \cdot \n_v (\w f))_{\e_1}  \b'((\w f)_{\e_1}) & =g \cdot \n_v (\w f)_{\e_1}  \b'((\w f)_{\e_1}) + [( g \cdot \n_v (\w f))_{\e_1} - g \cdot \n_v (\w f)_{\e_1} ] \b'((\w f)_{\e_1}) \\
& = g \cdot \n_v  \b((\w f)_{\e_1}) + [( g \cdot \n_v (\w f))_{\e_1} - g \cdot \n_v (\w f)_{\e_1} ] \b'((\w f)_{\e_1}).
\end{split}
\end{equation*}
The commutator between two $L^2$-functions converges to zero classically. As to the main term $g \cdot \n_v  \b((\w f)_{\e_1})$, by the product rule between a $C^\infty$ and $L^2$-function
\[
g \cdot \n_v  \b((\w f)_{\e_1}) = \n_v( g  \b((\w f)_{\e_1})  ) + n \st_\rho \one_{\rho >\d} \b((\w f)_{\e_1}),
\]
which distributionally converges to 
\[
 \to  \n_v( g  \b(\w f) ) + n \st_\rho \one_{\rho >\d}  \b(\w f). 
\]
We thus have proved that 
\[
(\n_v( g \w f))_{\e_1}  \b'((\w f)_{\e_1}) \to \n_v( g  \b(\w f) ) + n \st_\rho \one_{\rho >\d}[ \b(\w f)  - \w f \b'(\w f)]
\]

Let us now consider the complementary  term: 
\begin{equation*}\label{}
\begin{split}
(\n_v( \w \cA(f) f \one_{0< \rho \leq \d}   ))_{\e_1}  \b'((\w f)_{\e_1}) & = \n_v(( \w \cA(f) f \one_{0< \rho \leq \d}   )_{\e_1}  \b'((\w f)_{\e_1}) )\\
& - (\w \cA(f) f \one_{0< \rho \leq \d})_{\e_1}  \b''((\w f)_{\e_1}) \n_v (\w f)_{\e_1} .
\end{split}
\end{equation*}
The first term is under the full derivative, so distributionally it converges to where the expression inside does, which is 
\[
\n_v(( \w \cA(f) f \one_{0< \rho \leq \d}   )_{\e_1}  \b'((\w f)_{\e_1}) ) \to \n_v(\w \cA(f) f \one_{0< \rho \leq \d}   \b'(\w f) ).
\]
The second term converges classically since both $\cA(f)f$ and $ \n_v (\w f)$ are in $L^2$:
\[
- (\w \cA(f) f \one_{0< \rho \leq \d})_{\e_1}  \b''((\w f)_{\e_1}) \n_v (\w f)_{\e_1}  \to - \w \cA(f) f \one_{0< \rho \leq \d} \b''(\w f) \n_v (\w f).
\]

To summarize we have proved so far that for any $\d>0$, the following limit holds:
\begin{equation*}\label{}
\begin{split}
(\n_v( \cA(f) \w f))_{\e_1}  \b'((\w f)_{\e_1})  \to &  \n_v( \cA(f)   \b(\w f)  \one_{\rho >\d}) + n \st_\rho \one_{\rho >\d}[ \b(\w f)   - \w f \b'(\w f)]\\
+ & \n_v(\w \cA(f) f \one_{0< \rho \leq \d}   \b'(\w f) ) -  \w \cA(f) f \one_{0< \rho \leq \d} \b''(\w f) \n_v (\w f).
\end{split}
\end{equation*}
The above identity holds for any $\d \to 0$ on the right hand side, but the left hand side is independent of $\d$. So, it remains to be seen where the right hand side converges  distributionally as $\d\to 0$.  For the first term, since $ |\cA(f)   \b(\w f)  | \lesssim |\cA(f)|f \in L^1$,
\[
 \n_v( \cA(f)   \b(\w f)  \one_{\rho >\d}) \to  \n_v( \cA(f)   \b(\w f)  \one_{\rho >0})=\n_v( \cA(f)   \b(\w f)  ).
 \]
 For the second term, classically, and since it vanishes a.e. on $\rho= 0$,
 \[
  n \st_\rho \one_{\rho >\d}[ \b(\w f)  - \w f \b'(\w f)] \to  n \st_\rho [ \b(\w f)   - \w f \b'(\w f)].
  \]
For the third term, distributionally 
\[
\n_v(\w \cA(f) f \one_{0< \rho \leq \d}   \b'(\w f) ) \to 0.
\]
And lastly, for the fourth term in $L^1$,
\[
 \w \cA(f) f \one_{0< \rho \leq \d} \b''(\w f) \n_v (\w f) \to 0.
\]

We have proved the following limit for the main alignment term  
\[
(\n_v( \cA(f) \w f))_{\e_1}  \b'((\w f)_{\e_1})  \underset{\e_1 \to 0}{ \longrightarrow }  \n_v( \cA(f)   \b(\w f)  ) + n \st_\rho [ \b(\w f)  - \w f \b'(\w f)],
\]
and since $\w f \to f$ pointwise and $|\w f| \leq f$, we further obtain
\[
 \underset{\e_2 \to 0}{ \longrightarrow }  \n_v( \cA(f)  \b(f)  ) + n \st_\rho [ \b(f)  - f \b'(f)].
\]

In conclusion, the alignment term converges to
\begin{equation}\label{ }
 ( \w \n_v(\cA(f) f ))_{\e_1}  \b'( (\w f)_{\e_1}) \to  \n_v( \cA(f)  \b(f)  ) + n \st_\rho [ \b(f)  - f \b'(f)],
\end{equation}
exactly as appears in \eqref{e:renorm-0}.

\noindent\underline{{\it Dissipation term.}}  Let us continue to the dissipation term in \eqref{e:e1e2} given by $(\w  \st_\rho \D_v f)_{\e_1}  \b'( (\w f)_{\e_1})$. By the product rule,
\begin{equation*}\label{}
(\w  \st_\rho \D_v  f)_{\e_1}  \b'( (\w f)_{\e_1})  =  \n_v(  (\w \st_\rho \n_v  f )_{\e_1}  \b'( (\w f)_{\e_1})) - (\w  \st_\rho \n_v f)_{\e_1} \cdot \n_v (\w f)_{\e_1} \b''( (\w f)_{\e_1}).
\end{equation*}
The second term converges classically as pairing between two $L^2$-functions,
\[
(\w  \st_\rho \n_v f)_{\e_1} \cdot \n_v (\w f)_{\e_1} \b''( (\w f)_{\e_1}) \to \w^2  \st_\rho |\n_v f|^2 \b''( \w f).
\]
Let us take a closer look at the first term. It is a full derivative of 
\[
(\w \st_\rho \n_v  f )_{\e_1}  \b'( (\w f)_{\e_1}) =  ( \st_\rho \n_v (\w f) )_{\e_1}  \b'( (\w f)_{\e_1}),
\]
which we write as 
\begin{equation*}\label{}
\begin{split}
( \st_\rho \n_v (\w f) )_{\e_1}  \b'( (\w f)_{\e_1}) & =  \st_\rho \n_v (\w f)_{\e_1}  \b'( (\w f)_{\e_1}) + [ ( \st_\rho \n_v (\w f) )_{\e_1} -  \st_\rho \n_v (\w f)_{\e_1} ] \b'( (\w f)_{\e_1}) \\
& = \st_\rho \n_v   \b( (\w f)_{\e_1}) + [ ( \st_\rho \n_v (\w f) )_{\e_1} -  \st_\rho \n_v (\w f)_{\e_1} ] \b'( (\w f)_{\e_1}).
\end{split}
\end{equation*}
Inside the commutator we have a pairing between $\st_\rho \in L^\infty \ss L^2_\loc(\domain)$ and $g = \n_v (\w f) \in L^2(\domain)$. So, $( \st_\rho g )_{\e_1} - \st_\rho g _{\e_1} \to 0 $ in $L^1_\loc(\domain)$ classically, and therefore distributionally, 
\[
 [ ( \st_\rho \n_v (\w f) )_{\e_1} -  \st_\rho \n_v (\w f)_{\e_1} ] \b'( (\w f)_{\e_1}) \to 0.
 \]
At the same time, $ \b( (\w f)_{\e_1}) \to  \b( \w f)$ pointwise boundedly, so 
\[
 \st_\rho \n_v   \b( (\w f)_{\e_1}) \to  \st_\rho \n_v   \b( \w f) ,
 \]
distributionally.  We have established that the dissipation term converges distributionally to 
\[
(\w  \st_\rho \D_v f)_{\e_1}  \b'( (\w f)_{\e_1})  \underset{\e_1 \to 0}{ \longrightarrow }   \st_\rho \D_v   \b( \w f) - \w^2  \st_\rho |\n_v f|^2 \b''( \w f).
\]

Next, we take the limit as $\e_2 \to 0$. Since $\w f \to f$ pointwise as we argued earlier, while $\st_\rho |\n_v f|^2\in L^1$,  we trivially have
\[
 \st_\rho \D_v   \b( \w f) \to  \st_\rho \D_v   \b(f) 
\]
as a distribution, and 
\[
\w^2  \st_\rho |\n_v f|^2 \b''( \w f) \to \st_\rho |\n_v f|^2 \b''( f),
\]
by the dominated convergence.  This recovers the last term in \eqref{e:renorm-0} as desired.

\noindent\underline{{\it Residual term.}}  Finally, the residual term converges to its natural limit as follows from the regularity of $\w$ and $f$:
\[
R_{\e_1, \e_2}  \b'( (\w f)_{\e_1}) \to R_{\e_2}  \b'( \w f),
\]
where 
\[
R_{\e_2} = \p_t \w f  +  f v \cdot \n_x \w .
\]
It remains to show that $R_{\e_2} \to 0$ in $L^1$. Let us start with the first term,
\[
\p_t \w  = \e_2 \frac{ \p_t \th_{\rho,r_0} }{ (  \th_{\rho,r_0}  + \e_2)^2} .
\]
If $  \th_{\rho,r_0}  \neq 0$, then $\p_t \w \to 0$. If $  \th_{\rho,r_0}  =0$, then $\p_t \w f = 0$ a.e. So, pointwise, we have $\p_t \w f \to 0$. The time derivative of the thickness is uniformly bounded by \lem{l:contth}, so 
\[
|\p_t \w  f| \lesssim \frac{f}{\th_{\rho,r_0}} \in L^1(\domain),
\]
by \eqref{e:KMT0}.  So, $\p_t \w  f \to 0$ in $L^1$ by the dominated convergence. 

Next,
\[
 f v \cdot \n_x \w  = \e_2 f v \cdot  \frac{  \n_x \th_{\rho,r_0} }{ ( \th_{\rho,r_0}  + \e_2)^2}.
 \]
We argue as previously, $ f v \cdot \n_x \w \to 0$ almost everywhere, and 
\[
| f v \cdot \n_x \w| \leq |v| f  \frac{|\n_x \th_{\rho,r_0} | }{\th_{\rho,r_0}} \leq |v| \frac{f}{\sqrt{ \th_{\rho,r_0}} }  |\n_x \sqrt{\th_{\rho,r_0}} |   \leq |v|^2 f +  \frac{f}{\th_{\rho,r_0}} \in L^1,
\]
since $ |\n_x \sqrt{\th_{\rho,r_0}} | \leq C \| D^2 \th_{\rho,r_0}\|^{1/2}_\infty < \infty$.
So, $ f v \cdot \n_x \w \to 0$ in $L^1$. 

This concludes the proof.
\end{proof}

\subsection{Entropy law} Our next goal is to prove the entropy equality for weak solutions in the class  \eqref{e:regweak}. To do that we formally have to substitute  $\b(x) = x \log x$ in the renormalized equation. Since such $\b$ is not smooth we have to perform an extra approximation procedure, which we will do next.

\begin{proposition}\label{p:renormsharp}
Suppose $\b \in C([0,\infty))$, $\b', \b'' \in L^\infty_\loc( (0,\infty))$, $\b(0) = 0$, and 
\[
\sup_{0<x<X} x |\b''(x)| <C(X), \quad \forall X.
\]
Then the renormalization \eqref{e:renorm-0} holds distributionally. 
\end{proposition}

\begin{proof}
Note that for all  $0<x<X$,
\[
|\b'(x) - \b'(1)| \leq A |\log x|.
\]
So, $|\b'(x)| \leq A|\log x| + B$. Integrating once again over $[0,x]$, and using that $\b(0) = 0$,
\[
|\b(x)| \leq A x|\log x| + B x.
\]
According to  \lem{l:LlogL}, $\b(f) \in  L^\infty_t L^1_{x,v}$. 

Let us consider a tapering of $\b$: for $\e>0$,  define
\begin{equation*}\label{}
\b_\e(x) = \left\{\begin{split}
\b(x), &\quad \e<x \\
h_\e(x), &\quad 0\leq  x\leq \e,
\end{split}\right.
\end{equation*}
where $h_\e$ is any $C^\infty$-function on $[0,\e]$ which connects smoothly with $\b(x)$ at $x=\e$ and such that 
\begin{equation}\label{e:he}
|h_\e(x)| \lesssim x |\log x|, \quad |h'_\e(x)| \leq |\log x|, \quad |h''_\e(x)| \leq \frac{1}{x}, \quad \text{for } x \leq \e.
\end{equation}
Then we have pointwise convergences on any $0\leq x<X$,
\begin{equation}\label{e:pointb}
\b_\e(x) \to \b(x), \quad x \b'_\e(x) \to x \b'(x)
\end{equation}

Since $f\in L^\infty$, we  can regard $\b$ is bounded at infinity as well. With that we can plug in $\b_\e$ into \eqref{e:renorm-0}.

\begin{equation}\label{e:renorm-e}
\begin{split}
\p_t \b_\e(f) & + v \cdot \n_x \b_\e(f) + \n_v( \cA(f) \b_\e(f))  =   n \st_\rho(f \b_\e'(f) - \b_\e(f))\\
&  +\st_\rho \D_v \b_\e(f) - \st_\rho |\n_v f|^2 \b_\e''(f).
\end{split}
\end{equation}
It is now a matter of checking that the equation converges distributionally to \eqref{e:renorm-0}, as $\e \to 0$. 

Indeed, due to \eqref{e:he}, \eqref{e:pointb}, $\b_\e(f) \to \b(f)$ in $L^1_{x,v}$ for each time $t<T$ by the  dominated convergence. Moreover, by  \lem{l:LlogL}, $v \b(f), v\b_\e(f) \in  L^\infty_t L^1_{x,v}$ with the common dominant $| v\b_\e(f)| \leq |v| f |\log f|$. So, again, $v \b(f) \to v\b_\e(f) $ in $L^1_{t,x,v}$, and hence, $ v \cdot \n_x \b_\e(f)  \to  v \cdot \n_x \b(f) $ distributionally. Similarly, $f \b'(f), f\b'_\e(f) \in  L^\infty_t L^1_{x,v}$ with $f\b'_\e(f) \leq f |\log f| \in L^1$. So, we have $n \st_\rho(f \b_\e'(f) - \b_\e(f)) \to n \st_\rho(f \b'(f) - \b(f))$ in $L^1$. Similarly, $\st_\rho \D_v \b_\e(f) \to \st_\rho \D_v \b(f)$ distributionally. 

Let us look into the last diffusion term $ \st_\rho |\n_v f|^2 \b_\e''(f)$.  Due to \eqref{e:he} this term has a common integrable dominant $\st_\rho \frac{|\n_v f|^2}{f}$. We also have $\b_\e''(f) \to \b''(f)$ pointwise, where $f \neq 0$.  On the set $\{ f = 0\}$, however, $\st_\rho |\n_v f|^2 =0$ almost everywhere. So, pointwise almost everywhere $\st_\rho |\n_v f|^2 \b_\e''(f) \to  \st_\rho |\n_v f|^2 \b''(f)$. Hence, the convergence also holds in $L^1_{t,x,v}$ by the dominated convergence. 

Lastly, let us look into the alignment term $\cA(f) \b_\e(f) =  \rmw_\rho(u)\b_\e(f)  - v \st_\rho \b_\e(f) $. The convergence $v \st_\rho \b_\e(f) \to v \st_\rho \b(f)$ in $L^1$ follows from the above discussion already. Pointwise we have $\rmw_\rho(u)\b_\e(f) \to  \rmw_\rho(u)\b(f)$. Due to \eqref{e:he} , we also have $|\rmw_\rho(u)\b_\e(f)| \lesssim |\rmw_\rho(u)| f |\log f| \leq |\rmw_\rho(u)|^2 f + f|\log f|^2$, which thanks to   \lem{l:LlogL} belongs to $L^\infty_t L^1_{x,v}$. So, the convergence $\rmw_\rho(u)\b_\e(f) \to  \rmw_\rho(u)\b(f)$ holds in $L^1$ by the dominated convergence. 
\end{proof}

Let us note that since the approximation $\b_\e(f) \to \b(f)$ holds in $L^1_{x,v}$ for each time $t<T$, we have the weak formulation of \eqref{e:renorm-0} with initial and terminal times: for any $\f \in C^\infty_0([0,T) \times \domain)$,
\begin{equation}\label{e:renorm-int}
\begin{split}
\int_{\domain \times \{t\} } & \b(f) \f \dv\dx - \int_{\domain\times \{0\} } \b(f_0) \f \dv\dx - \int_0^t \int_{\domain} \b(f) \p_s \f \dv\dx \ds \\
=   \int_0^t \int_{\domain}  & \b(f)  v \cdot \n_x \f  \dv\dx \ds+  \int_0^t \int_{\domain} \cA(f) \b(f) \n_v \f  \dv\dx \ds \\
+   \int_0^t \int_{\domain} &  n \st_\rho (f \b'(f) - \b(f)) \f  \dv\dx \ds  + \int_0^t \int_{\domain}   \st_\rho   \b(f) \D_v \f  \dv\dx \ds \\
 -  \int_0^t \int_{\domain} & \st_\rho  |\n_v f|^2 \b''(f)  \f  \dv\dx \ds.
\end{split}
\end{equation}

To derive a ``$\b$-law" from \eqref{e:renorm-int}, i.e. an equation on $\ddt \int_{\domain  }  \b(f)  \dv\dx$ we must take $\f=1$. To do that carefully, we approximate $1$ by $\f_R(v) = \f(v/R)$ where $\f =1$ on the ball $|v|<1$ and $\f\in C^\infty_0(|v|<2)$. We only check the non-trivial terms, namely the alignment and diffusion. Let us start with the alignment
\[
\left|\int_0^t \int_{\domain} \cA(f) \b(f) \n_v \f_R  \dv\dx \ds \right| \leq \frac1R \int_0^t \int_{R<|v|<2R} |\cA(f)| |\b(f)|  \dv\dx \ds
\]
Since $|\cA(f)| |\b(f)| \lesssim  \jap{v} f \jap{\log f}^2 + |\rmw_\rho(u)|^2 f  \in L^1$, the integral vanishes as $R\to \infty$. Next,
\[
\left| \int_0^t \int_{\domain}   \s  \b(f) \D_v \f_R  \dv\dx \ds \right| \leq \frac{1}{R^2} \int_0^t \int_{R<|v|<2R}  R^2 |\b(f)|  \dv\dx \ds = \int_0^t \int_{R<|v|<2R} |\b(f)|  \dv\dx \ds \to 0.
\]
Finally, since $|\b''(f)| \lesssim 1/f$, $\st_\rho  |\n_v f|^2 \b''(f)\in L^1$, and hence
\[
  \int_0^t \int_{\domain}  \st_\rho  |\n_v f|^2 \b''(f)  \f_R  \dv\dx \ds \to   \int_0^t \int_{\domain}  \st_\rho  |\n_v f|^2 \b''(f)   \dv\dx \ds.
  \]
\begin{proposition}\label{p:b-law}
Under the assumptions of \prop{p:renormsharp} the integral $\int_{\domain  }  \b(f)  \dv\dx$  is absolutely continuous in time and 
\begin{equation}\label{e:b-law}
\ddt \int_{\domain  }  \b(f)  \dv\dx =  \int_{\domain}   n \st_\rho (f \b'(f) - \b(f))  \dv\dx - \int_{\domain}  \st_\rho  |\n_v f|^2 \b''(f)  \dv\dx .
\end{equation}
\end{proposition}

Now we can finally set $\b(x) = x \log x$ and  obtain from \eqref{e:b-law}
\begin{equation}\label{ }
\ddt \int_{\domain  }  f \log f  \dv\dx =  \int_{\domain}   n \st_\rho  f  \dv\dx - \int_{\domain}  \st_\rho  \frac{|\n_v f|^2}{f}   \dv\dx .
\end{equation}
Combining it with the energy equality obtained previously in \eqref{e:eneq} we obtain
\begin{equation*}\label{}
\begin{split}
\ddt \int_{\domain}\cH & =2 \int_{\domain}  n  \st_\rho  f  \dv\dx - \int_{\domain}   \st_\rho \frac{| \n_v f|^2}{f}   \dv\dx +  (u,\ave{u}_\rho)_{\k_\rho} -  \int_{\domain} \st_\rho |v|^2 f \dv \dx 
\end{split}
\end{equation*}
We can justify integration by parts to express the first integral as
\[
 \int_{\domain}  n  \st_\rho  f  \dv\dx = -  \int_{\domain}    \st_\rho v \cdot \n_v f  \dv\dx 
\]
because $\st_\rho |v || \n_v f| \leq \st_\rho |v|^2 f + \st_\rho  \frac{| \n_v f|^2}{f}  \in L^1$. So, the right hand side of the entropy law converts into the partial Fisher information and the alignment energy exactly as in \eqref{e:entropylaw}.

\subsection{Super-solutions and comparison principle}

Let us notice that the renormalization has been obtained in close association of the solution $f$ with the coefficients $\cA(f)$ and $\st_\rho$, i.e. for strictly non-linear equation. In case if  $\rmw_\rho(u) \in L^\infty(\O)$ such association is not so crucial any more, since the paring $g \cA(f)$ makes distributional sense for any $g\in L^1_\loc$.  This allows us to at least partially disassociate the solution $f$ from the coefficients and state a more general and very instrumental renormalization result for smooth perturbations of $f$.

\begin{proposition}\label{p:renorm-super} Suppose $\rmw_\rho(u) \in L^\infty(\O\times [0,T))$ and  $\chi \in C^2_\loc ( \domain \times [0,T) )$ is a sub-solution to \eqref{e:FPA}  
\begin{equation}\label{e:FPAsuper}
\p_t \chi + v \cdot \n_x \chi +\n_v \cdot (  \cA(f)\chi ) \leq \st_\rho \D_v  \chi.
\end{equation}
 Then $g = f - \chi$ is a renormalized super-solution to \eqref{e:FPA}: for any  $\b \in C^{2}$ with $\b' \geq 0$,  
\begin{equation}\label{e:renorm-super}
\p_t \b(g)  + v \cdot \n_x \b(g) + \n_v( \cA(f) \b(g))  \geq   n \st_\rho (g \b'(g) - \b(g)) + \st_\rho(\D_v  \b(g) -  |\n_v g |^2 \b''(g)).
\end{equation}
\end{proposition}
\begin{proof}
The proof is just a repeat of \prop{p:renorm} where we prerenormalize the $f$-solution exactly as before, and combine with the $\chi$-equation \eqref{e:FPAsuper} tested with $\b'( (\w f)_{\e_1} - \chi)$:
\begin{multline*}
\p_t  \b((\w f)_{\e_1} - \chi)  + [(v \cdot \n_x(\w f))_{\e_1} - v \cdot \n_x \chi ]\b'( (\w f)_{\e_1}-\chi) +  [ (\w \n_v ( \cA(f)  \n_v f ))_{\e_1} - \n_v(\cA(f) \chi)]  \b'( (\w f)_{\e_1} - \chi ) 
\\=  [ (\w \st_\rho \D_v f)_{\e_1} - \st_\rho \D_v \chi] \b'( (\w f)_{\e_1} - \chi)
+ R_{\e_1, \e_2}  \b'( (\w f)_{\e_1} - \chi ).
\end{multline*}

The analysis of the alignment force becomes easier in this case due to the assumption of boundedness of the alignment operator $\cA(f)$. 
\end{proof}

The main utility of renormalization of super-solutions for us  is to conclude that  $g_-  = - \min\{ g, 0\}$ is a sub-solution. 

\begin{corollary}\label{}
Under the assumptions of \prop{p:renorm-super}, $g_-$ is a distributional sub-solution to 
\begin{equation}\label{e:renorm-}
\p_t g_-  + v \cdot \n_x g_- + \n_v( \cA(f) g_-)  \leq \st_\rho \D_v g_-.
\end{equation}
\end{corollary}
\begin{proof}
Technically, $g_- = - \b(g)$, where $\b(x) =  x \one_{x<0}$, which is not a smooth function. To make this rigorous, we consider an approximating sequence of $C^\infty$ functions $\b_\e$ so that $\b_\e(x) = 0$ for $x \geq 0$, $\b_\e(x) = x$, for $x < - \e$, $\b_\e' \geq 0$, and so that $\b_\e'' \leq 0$ and $|x\b''_\e(x)| \to 0$ pointwise and boundedly. These conditions imply in particular that
\[
g \b_\e'(g) - \b_\e(g) \to 0,
\]
in $L^1_\loc$. The last term of the renormalized inequality \eqref{e:renorm-super} is non-negative, $-  \st_\rho|\n_v g |^2 \b_\e''(g) \geq 0$. And the rest of the terms converge distributionally by dominated convergence. So, the limiting function $\b(g)$ satisfies
\[
\p_t \b(g)  + v \cdot \n_x  \b(g) + \n_v( \cA(f)  \b(g))  \geq \st_\rho \D_v  \b(g),
\]
which after reversing the sign implies \eqref{e:renorm-}.
\end{proof}

From this we can deduce a weak maximum principle.

\begin{corollary}[Weak Comparison Principle]\label{c:wmp}
Under the assumptions of \prop{p:renorm-super}, suppose that $(f - \chi)_- \in L^1( \domain \times [0, T])$, and $f_0 \geq \chi_0$.  Then $f \geq \chi$ everywhere on $\domain \times [0, T]$.
\end{corollary}
\begin{proof} We simply need to make sure that we can integrate the renormalized equation \eqref{e:renorm-} over the entire domain. This is done in the same way as in the proof of \prop{p:b-law} by testing \eqref{e:renorm-} with a truncated function $\f_R(v)$. Since $g_-\in L^1$ all the terms vanish in the limit as $R\to 0$ leading to 
\[
\int_\domain g_-(t) \dv \dx \leq \int_\domain g_-(0) \dv \dx = 0,
\]
for all $t \leq T$. The result follows.
\end{proof}

\subsection{Proof of \prop{p:Gauss} } \label{ss:propGauss}
We now make remarks about the extension of \prop{p:Gauss} as stated to the class of weak solutions. In fact the proof presented in \cite{S-EA} goes ad verbatim as it relies mostly on weak formulation and makes use of the  weak Harnack inequality proved in \cite{GI2021} for weak solutions as well.  Let us bring the attention to  \cite[Lemma 7.8]{S-EA} which is the only spot in the proof relying on the classical comparison principle for strong solutions. We claim that 
if
\begin{equation}\label{e:fellip}
f(t,x,v) \geq \d \one_{\{ |x| < r, \ |v| <R\}}, \quad 0\leq t <\t,
\end{equation}
where $\t>0$ is some small time, then
\[
f(t,x,v) \geq \frac{\d}{4}\one_{\{ |x - t v | < r/2, \ |v| <R/2\}},
\]
for a certain much longer time period $t<t_1$. We rely on the fact that $f$ is a super-solution of \eqref{e:FPA}, and for a properly chosen $A>0$,  the barrier function
\[
\chi(t,x,v) = - At + \frac{\d}{2}  \left( \frac12 - \frac{|x-tv|^2}{r^2} - \frac{|v|^2}{R^2} \right)
\]
is a (classical) sub-solution to \eqref{e:FPA} for $t<t_1$. This makes $f - \chi$ a weak super-solution to \eqref{e:FPA}.  Moreover, $(f - \chi)_-$ is non-zero where $f < \chi$, which is only possible on the compact ellipsoid $\frac12 - \frac{|x-tv|^2}{r^2} - \frac{|v|^2}{R^2} \geq 0$, where of course both $f$ and $\chi$ are uniformly bounded. So, $(f - \chi)_- \in L^1$. \cor{c:wmp} implies $f \geq  \chi $ everywhere and this proves the needed bound from below.

\subsection{Propagation of higher moments}
As another application of the renormalization we show propagation of higher moments. Note that if the initial condition $f_0$ belongs to $L^p_m$ for some $p<\infty$, $m>q$, then the existence part of \thm{t:weak} guarantees that at least one solution starting from $f_0$ will have finite  moment $L^p_m$. However, if the data is vacuous it is not known whether it will coincide with the given "old" solution in $L^1_q \cap L^\infty$. One can show that at least for protocols of $(2,\infty)$-type, the higher moment is propagated by any solution starting from such data.

\begin{lemma}\label{l:moments}
Let $\cC$ be a regular local protocol of type $(2,\infty)$. Then any weak solution in the class \eqref{e:regweak} with $f_0 \in L^p_m$, for $1\leq p <\infty$,  $m > 0$, will propagate its moment, $f\in L^\infty([0,T); L^p_m)$, for any $T>0$.
\end{lemma}

We will make use of tapered moments as a way to approximate true moments which are not a priori known to be a bounded.  We consider
\begin{equation}\label{e:weights}
\w_{m,R} (v) = \jap{v}^m  \jap{v/R}^{-Q}, \quad R >1, \ Q > m + n +1.
\end{equation}
 The important feature of this particular tapered moment is that it  controls its higher weighted gradients, it's hierarchial-in-$m$, and it has doubling property, all independent of $R$:
\begin{align}
(1+|v|^k) |\n^k_v \o_{m,R}(v) | & \leq C_k \o_{m,R}(v), \quad k\in \N, \label{e:whigh} \\
 |\n^k_v \w_{m,R}(v) | & \leq C_k \o_{m-k,R}(v), \quad k\in \N, \label{e:whier}\\
c \leq \frac{\w_{m,R}(v')}{\w_{m,R}(v'')} \leq C,& \text{ whenever } \frac12 \leq \frac{|v'|}{|v''|} \leq 2. \label{e:wdouble}
\end{align}

\begin{proof}
Before we begin we make one observation.  Since weak solution in the class stated in \eqref{e:regweak} have integrability of $\st_\rho \n_v f$ one can move one derivate from $\D_v \phi$ in the definition of weak solutions \eqref{e:weak} back on $f$. Then since all the remaining test function appear in derivative of first order against integrable functions, by elementary approximation, the equation \eqref{e:weak} extends to locally Lipschitz bounded test functions. By this observation we can test with an algebraically decaying in $v$ functions.

So, we will start from the renormalization \eqref{e:renorm} with $\b(x) = x^p$, and test it with $\f = \w_{m,R}$ (note that $\b$ can be tapered at infinity without altering the composition $\b(f)$ thanks to $f\in L^\infty$).  Denote
\[
j_{p,m,R} = \int_{\domain}   \w_{m,R}  f^p  \dv\dx .
\]
We obtain from  the boundedness of $\rmw_\rho$ and \eqref{e:whier} - \eqref{e:whigh},
\begin{equation*}\label{}
\begin{split}
j_{p,m,R} (t) - j_{p,m,R} (0)  = &    \int_0^t \int_{\domain} \st_\rho  f^p  \D_v \w_{m,R} \dv \dx \ds  - p \int_0^t \int_{\domain} \st_\rho   f^p v  \cdot\n_v   \w_{m,R} \dv \dx \ds \\
& - p(p-1) \int_0^t  \int_{\domain} \st_\rho |\n_v f|^2 f^{p-2}  \w_{m,R} \dv\dx \ds \\
&+ (p-1) \int_0^t  \int_{\domain} \st_\rho  f^{p}  \n_v\cdot (v \w_{m,R}) \dv\dx \ds\\
&  + \int_0^t  \int_{\domain}  f^p \rmw_\rho \cdot \n_v  \w_{m,R}   \dv\dx \ds \\
& \lesssim \int_0^t j_{p,m,R} \ds  - p(p-1) \int_0^t  \int_{\domain} \st_\rho |\n_v f|^2 f^{p-2}  \w_{m,R} \dv\dx \ds
\end{split}
\end{equation*}

Ignoring the dissipation term for a moment we obtain by the \GL, $j_{p,m,R} (t) \leq j_{p,m,R}(0) e^{ct}$. Letting $R \to \infty$ we obtain by monotone convergence,
\[
 \int_{\domain}  \jap{v}^m f^p  \dv\dx =\lim_{R \to \infty} j_{p,m,R} \leq C.
\]
\end{proof}

\section{Global hypoellipticity} \label{s:hypoell}

We now address parabolic regularization for weak solutions and provide a proof of \prop{p:instantreg}. According to \prop{p:Gauss} weak solutions gain positivity instantly, and thanks to regularity \ref{i:r2}, \eqref{e:sreg1} all the coefficients become smooth and the diffusion coefficient $\s$ becomes bounded away from zero by the locality \eqref{e:stbelow}. From this point on, no specific structure of the coefficients is necessary, nor their association with $f$ itself. So, we will address regularization in the context of a  linear Fokker-Planck equation
\begin{equation}\label{e:FPgen}
\p_t f + v \cdot \n_x f = \n_v (\rmA \n_v f) + \n_v \cdot (\rmb f).
\end{equation}
Here, $\rmA = \rmA(x,v,t)\in \R^{n} \times \R^n$ is a given matrix, and $\rmb = \rmb(x,v,t) \in \R^n$ is a field satisfying
\begin{equation}\label{e:Ab1}
\l \I \leq \rmA(x,v,t) \leq \L \I,  \quad (x,v,t) \in \domain\times [0,T)
\end{equation}
and for any multi-indeces $\bk,\bl \geq 0$,
\begin{equation}\label{e:Ab2}
\| \p^{\bk}_{x} \p^{\bl}_v \rmA\|_\infty < \infty, \quad   \| \jap{v}^{-1} \p^{\bk}_{x}  \rmb\|_\infty + \| \p^{\bk}_{x} \p^{\bl + 1}_v \rmb\|_\infty  <\infty.
\end{equation}
Thus, the drift $\rmb$ can grow  linearly at $v$-infinity unless at least one derivative in $v$ is applied.

First let us denote weighted semi-norms by
\[
\rmh^{k,l}_q(f) = \sum_{|\bk| = k, |\bl| = l}  \int_\domain  \jap{v}^q  |\p^{\bk}_{x} \p^{\bl}_v f |^2 \dv\dx.
\]
According to our definition of weighted Sobolev spaces \eqref{e:Sobdef}, we can write 
\begin{equation}\label{e:Sobdef}
H^{m}_q(\domain) =  \left\{ f :  \rmh(f) = \sum_{2 k + l \leq 2m}   \rmh^{k,l}_{q - 2k - l}(f) <\infty \right\}.
\end{equation}
It is worth noting that the regressive weights are only necessary for models with the $v$-growing part of $b$ dependent on $x$, see \cite{Villani-hypo} for the homogeneous case.

\begin{proposition}\label{p:reg}
Let $f\in L^\infty([0,T); L^1\cap L^2) \cap C([0,T); \cD')$ be a weak solution to \eqref{e:FPgen} satisfying \eqref{e:Ab1} - \eqref{e:Ab2}, and with initial condition in class $f_0 \in L^2_q$, for some $q \geq 0$. Then, $f \in L^\infty([0,T); L^2_q)$, and for any $m \in \N$, there exist  constants $C_m,\k>0$ such that 
\begin{equation}\label{e:reg}
\|f(t) \|_{H^m_q} + \|\p_t f(t)\|_{H^{m-1}_{q-3}}\leq \frac{C_m}{t^\k}.
\end{equation}
\end{proposition}

Clearly, \prop{p:reg} implies \prop{p:instantreg}. 
The proof of \prop{p:reg} will be performed in two stages. The first is devoted to reduction to the case when $f$ is already smooth and decaying sufficiently fast for all $t\geq 0$.  Once $f$ is regular, we perform a priori estimates and obtain the required bound independent of the regularity of the initial condition. 

The proof is in the spirit of global hypoellipticity methods summarized for the classical Fokker-Planck case, for example,  in \cite{Villani-hypo}. The advantage of this method, as opposed to local  methods such as Di Giorgi, is that it puts the solution directly into a  weighted space, thus controlling the growth of derivatives by design. Such control is crucial for the relaxation analysis in particular. 

We first address the classical energy  inequality and stability of weak solutions.

\begin{lemma} Every weak solution $f\in L^\infty([0,T); L^1\cap L^2) \cap C([0,T); \cD')$ automatically belongs to the class $\n_v f \in L^2_{t,x,v}$  and satisfies the following energy inequality
\begin{equation}\label{e:einAb}
\|f(t)\|_2^2 + \frac{\l}{2} \int_0^t \|\n_v f\|_2^2 \ds \leq \|f(0)\|_2^2 + c  \int_0^t \| f\|_2^2 \ds.
\end{equation}
\end{lemma}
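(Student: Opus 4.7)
The approach is the standard $L^2$ energy method, justified through mollification of the weak solution $f$.

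Formally, testing \eqref{e:FPgen} against $f$ and integrating over $\domain$: the transport term $\int v\cdot \nabla_x f\, f \dv\dx = \tfrac12 \int v\cdot \nabla_x(f^2) \dv\dx$ vanishes by $\nabla_x\cdot v = 0$; the diffusion term integrates by parts to $-\int \rmA\nabla_v f\cdot\nabla_v f \leq -\lambda \|\nabla_v f\|_2^2$ by the ellipticity bound \eqref{e:Ab1}; and the drift term, after integration by parts and the chain rule, becomes $\tfrac12 \int (\nabla_v\cdot \rmb) f^2 \leq c\|f\|_2^2$, with $c = \tfrac12\|\nabla_v\cdot \rmb\|_\infty < \infty$ provided by \eqref{e:Ab2} (only the bound on $\nabla_v \rmb$ is used, which neutralizes the linear growth of $\rmb$). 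This produces the differential form $\tfrac12 \ddt\|f\|_2^2 + \lambda\|\nabla_v f\|_2^2 \leq c\|f\|_2^2$, which time-integrates to something stronger than \eqref{e:einAb}.

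To make this rigorous for a weak solution with only $L^\infty_t L^2_{x,v}$ regularity, I would mollify $f$ in $(x,v)$ by a standard mollifier $\chi_\epsilon$, setting $f_\epsilon = f\ast_{x,v}\chi_\epsilon$, and derive the regularized equation
\[
\p_t f_\epsilon + v\cdot \nabla_x f_\epsilon = \nabla_v(\rmA \nabla_v f_\epsilon) + \nabla_v\cdot(\rmb f_\epsilon) + R_\epsilon,
\]
where $R_\epsilon$ collects commutator errors from (i) the multiplier $v$ in the transport, and (ii) the coefficients $\rmA, \rmb$. Since $f_\epsilon\in C^\infty_{x,v}$ and $\p_t f_\epsilon$ lies in $L^\infty_t L^2_{x,v,\mathrm{loc}}$ via the equation itself, the chain rule for $\|f_\epsilon\|_2^2$ applies classically, giving
\[
\tfrac12 \ddt\|f_\epsilon\|_2^2 + \int \rmA \nabla_v f_\epsilon\cdot \nabla_v f_\epsilon \dv\dx = \tfrac12\int(\nabla_v\cdot \rmb) f_\epsilon^2 \dv\dx + \int R_\epsilon f_\epsilon \dv\dx.
\]

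The main obstacle is controlling $\int_0^t\int R_\epsilon f_\epsilon \dv\dx\ds$ in the limit. The transport commutator is explicit: $[v\cdot \nabla_x, \chi_\epsilon\ast]f(x,v) = \int (w-v)\cdot\nabla_y f(y,w)\chi_\epsilon(x-y,v-w)\dy\dw$; integrating by parts in $y$ and exploiting the support condition $|v-w|\leq\epsilon$, this is $O(\epsilon)$ in $L^2_{x,v}$ using only $f\in L^2$. The diffusion and drift commutators are of Friedrichs/DiPerna--Lions type and vanish in $L^2$ thanks to the smoothness hypotheses \eqref{e:Ab2} on $\rmA$ and $\rmb$, together with the bounded $\nabla_v\rmb$ that dominates the linear $v$-growth. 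Passing $\epsilon\to 0$: $f_\epsilon(t)\to f(t)$ in $L^2$ for each $t$; the uniform bound on the left-hand side gives weak $L^2_{t,x,v}$ compactness for $\nabla_v f_\epsilon$ whose limit is identified with $\nabla_v f$ in the distributional sense (establishing $\nabla_v f \in L^2_{t,x,v}$ a posteriori); weak lower semicontinuity on the LHS and dominated convergence on the RHS then deliver \eqref{e:einAb}. The factor $\lambda/2$ in place of $\lambda$ provides slack to absorb any residual commutator contribution on the gradient side if needed.
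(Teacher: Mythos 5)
Your overall roadmap (mollify in $(x,v)$, take the $L^2$ energy balance for the mollified solution, control Friedrichs-type commutators, pass to the limit) coincides with the paper's, but you omit the key device the paper uses: the weighted test function $f_\e\,\w_{0,R}$, $\w_{0,R}(v)=\jap{v/R}^{-Q}$. Without it, the argument does not close, because of the drift commutator.

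Concretely, you assert that the commutator $(\rmb f)_\e - \rmb f_\e$ vanishes in $L^2$ because the bounded $\n_v\rmb$ ``neutralizes the linear growth.'' This is not correct. On the support of $\chi_\e$ one has
$|\rmb(x-y,v-w)-\rmb(x,v)|\lesssim |y|\,|\n_x\rmb| + |w|\,|\n_v\rmb|\lesssim \e\jap{v}$, and the $\jap{v}$ factor comes from $\n_x\rmb$ — which \eqref{e:Ab2} only controls up to a $\jap{v}$-weight — not from $\n_v\rmb$. Hence $|(\rmb f)_\e - \rmb f_\e| \lesssim \e\jap{v}\,\tilde f_\e$, and after Cauchy--Schwarz against $\n_v f_\e$ the estimate requires $\|\jap{v} f\|_2<\infty$, i.e. a first $L^2$-moment of $f$, which the lemma deliberately does not assume. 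The $\l/2$ slack you reserve can absorb $\|\n_v f_\e\|_2^2$, but not this moment-deficient term. (A secondary slip: the transport commutator $[v\cdot\n_x,\chi_\e\ast]f$ is another mollification of $f$ by an $L^1$-normalized kernel, hence $O(1)$ in $L^2$, not $O(\e)$; this is harmless since $O(\|f\|_2^2)$ is all one needs there.)

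The paper's fix is to carry the extra weight $\w_{0,R}$: then the drift commutator contributes $\e\int|\tilde f_\e|^2\w_{1,R}\dv\dx\leq \e R\|f\|_2^2$, because $\jap{v}\jap{v/R}^{-Q}\leq R\jap{v/R}^{-Q+1}$, and the spurious factor $\e R$ is killed by sending $\e\to 0$ with $R$ fixed and only then $R\to\infty$ via Fatou. This iterated-limit structure is what your proposal is missing; setting $R=\infty$ from the start makes the commutator estimate unbounded for the data class of the lemma.
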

\begin{proof}
 Let us mollify the equation
 \begin{equation}\label{e:FPmol}
\p_t f_\e + (v \cdot \n_x f)_\e = \n_v (\rmA \n_v f)_\e + \n_v \cdot (\rmb f)_\e,
\end{equation}
where all the gradients inside the mollifications are viewed as distributional.  Let us test with $f_\e \w_{0,R}$, with the weight $\w_{0,R}$ defined in \eqref{e:weights} being used as a temporary measure to control the growth of the drift $\rmb$:
\begin{multline*}
\frac12 \ddt  \int_\domain | f_\e |^2 \w_{0,R} \dv \dx+ \int_\domain (v \cdot \n_x f)_\e f_\e \w_{0,R} \dv \dx \\= - \int_\domain (\rmA \n_v f)_\e \n_v( f_\e \w_{0,R}) \dv \dx-  \int_\domain(\rmb f)_\e \cdot \n_v (f_\e \w_{0,R}) \dv\dx
\end{multline*}
For the transport term on the left, we use a cancellation and write
\begin{multline*}
\int_\domain (v \cdot \n_x f)_\e f_\e \w_{q,R} \dv \dx = \int_\domain [(v \cdot \n_x f)_\e - v \cdot \n_x f_\e] f_\e \w_{q,R} \dv \dx\\
 \leq  \int_\domain | f_\e |^2 \w_{0,R} \dv \dx +  \int_\domain |(v \cdot \n_x f)_\e - v \cdot \n_x f_\e|^2 \w_{0,R} \dv \dx.
\end{multline*}
We have
\[
(v \cdot \n_x f)_\e - v \cdot \n_x f_\e = - \int_\domain w \cdot \n_y \chi_\e(y,w) f(x-y,v-w)\dw \dy, 
 \]
 which represents another mollification with some $\tilde{\chi}_\e$, $\tilde{f}_\e = f \ast \tilde{\chi}_\e$. Thus, by \lem{l:wappr},
 \[
 \int_\domain |(v \cdot \n_x f)_\e - v \cdot \n_x f_\e|^2 \w_{0,R} \dv \dx \lesssim   \int_\domain | \tilde{f}_\e |^2 \w_{0,R} \dv \dx \leq \|f\|_2^2.
 \]
 
 Moving to the dissipation term,
 \begin{equation*}\label{}
\begin{split}
- &\int_\domain (\rmA \n_v f)_\e \n_v( f_\e \w_{0,R}) \dv \dx = -\int_\domain (\rmA \n_v f)_\e \n_v f_\e \w_{0,R} \dv \dx + \int_\domain (\rmA \n_v f)_\e f_\e \n_v \w_{0,R} \dv \dx \\
& \leq - \int_\domain [(\rmA \n_v f)_\e - \rmA \n_v f_\e] \n_v f_\e \w_{0,R} \dv \dx - \int_\domain \rmA \n_v f_\e \n_v f_\e \w_{0,R} \dv \dx \\
& +  \int_\domain  |\n_v f_\e| f_\e \w_{0,R} \dv \dx\\
& \leq - \frac{\l}{2} \int_\domain  |\n_v f_\e|^2 \w_{0,R} \dv \dx +  \|f\|_2^2 +\int_\domain | (\rmA \n_v f)_\e - \rmA \n_v f_\e|^2 \w_{0,R} \dv \dx.
\end{split}
\end{equation*}
 But,
\[
 (\rmA \n_v f)_\e - \rmA \n_v f_\e  = \int_\domain  [\rmA(x-y,v-w) - \rmA(x,v)] f(x-y,v-w) \n_w \chi_\e(y,w) \dw \dy.
\]
 Since $|\rmA(x-y,v-w) - \rmA(x,v)| \lesssim |y| + |w|$, the latter is bounded by $\tilde{f}_\e$, hence 
 \[
 \int_\domain | (\rmA \n_v f)_\e - \rmA \n_v f_\e|^2 \w_{0,R} \dv \dx \lesssim  \|f\|_2^2 .
 \]
 
 Lastly, for the drift term we have
\begin{equation*}\label{}
\begin{split}
 \int_\domain(\rmb f)_\e \cdot \n_v (f_\e \w_{0,R}) \dv\dx & =  \int_\domain(\rmb f)_\e \cdot \n_v f_\e \w_{0,R} \dv\dx +  \int_\domain(\rmb f)_\e f_\e  \n_v \w_{0,R} \dv\dx\\
 & \leq  \int_\domain [ (\rmb f)_\e - \rmb f_\e] \cdot \n_v f_\e \w_{0,R} \dv\dx +  \int_\domain \rmb f_\e \cdot \n_v f_\e \w_{0,R} \dv\dx \\
 &+  \int_\domain(\rmb f)_\e f_\e  \w_{-1,R} \dv\dx\\
 & \lesssim   \int_\domain | (\rmb f)_\e - \rmb f_\e |^2  \w_{0,R} \dv\dx + \frac{\l}{4}  \int_\domain  |\n_v f_\e|^2 \w_{0,R} \dv \dx\\
 &  - \frac12  \int_\domain  \n_v \cdot ( \rmb  \w_{0,R} ) f^2_\e \dv\dx + \|f\|_2^2.
\end{split}
\end{equation*}
 Since $ \n_v \cdot ( \rmb  \w_{0,R} )  \leq C$, the penultimate term is also bounded by $\|f\|_2^2$. Let us examine the last commutator,
 \begin{equation*}\label{}
\begin{split}
 | (\rmb f)_\e - \rmb f_\e | & =  \left| \int_\domain  [\rmb(x-y,v-w) - \rmb(x,v)] f(x-y,v-w)  \chi_\e(y,w) \dw \dy \right| \\
 & \leq  \int_\domain ( |v| |y| + |w|) | \chi_\e(y,w)| f(x-y,v-w) \dw \dy \leq \e \jap{v} \tilde{f}_\e.
\end{split}
\end{equation*}
Thus, for the drift term we have
\[
 \int_\domain(\rmb f)_\e \cdot \n_v (f_\e \w_{0,R}) \dv\dx \leq  \frac{\l}{4}  \int_\domain  |\n_v f_\e|^2 \w_{0,R} \dv \dx +  \|f\|_2^2 + \e \int_\domain  |\tilde{f}_\e|^2 \w_{1,R} \dv \dx.
\]
Additionally, notice that 
\[
 \e \int_\domain  |\tilde{f}_\e|^2 \w_{1,R} \dv \dx \leq \e R \int_\domain  |\tilde{f}_\e|^2 \jap{v/R}^{-Q+1} \dv \dx
 \leq  \e R \|f\|_2^2 .
 \]
 
 Putting together the obtained estimates we arrive at
\begin{equation}\label{e:eineR}
\frac12 \ddt  \int_\domain | f_\e |^2 \w_{0,R} \dv \dx \leq - \frac{\l}{4}  \int_\domain  |\n_v f_\e|^2 \w_{0,R} \dv \dx + (1 + \e R) \|f\|_2^2.
\end{equation}
Thus,
\[
 \int_\domain | f_\e(t)  |^2 \w_{0,R} \dv \dx +  \frac{\l}{4} \int_0^t \int_\domain  |\n_v f_\e|^2 \w_{0,R} \dv \dx \leq Ct(1+ \e R) +  \int_\domain | f_\e(0)  |^2 \w_{0,R} \dv \dx \lesssim 1 + \e R.
 \]
For a fixed $R$, let us let $\e\to 0$. We obtain from uniform boundedness of the sequence $\n_v f_\e$ in the weighted $L^2$ space that the limit $\n_v f$ belongs to the same space and satisfies
$\int_0^t  \int_\domain  |\n_v f |^2 \w_{0,R} \dv \dx \lesssim 1$,
uniformly in $R$. Letting $R \to \infty$, by Fatou's Lemma, we conclude that $
\int_0^t  \int_\domain  |\n_v f |^2  \dv \dx \lesssim 1$.
 
 Going back to \eqref{e:eineR} integrating in time, we can them pass to the limits as $\e \to 0$ and $R\to \infty $ consecutively, to obtain \eqref{e:einAb}.
\end{proof}

\begin{corollary} Weak solutions to \eqref{e:FPgen} are unique in class $ L^\infty([0,T); L^1\cap L^2) \cap C([0,T); \cD')$, and for any two solutions $f,g$ one has
\begin{equation}\label{e:contr}
\sup_{t<T} \| f- g \|_2 \leq C_T \|f_0 - g_0 \|_2.
\end{equation}
\end{corollary}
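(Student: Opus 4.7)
The plan is to exploit the linearity of \eqref{e:FPgen} together with the energy inequality \eqref{e:einAb} just established in the preceding lemma. Because equation \eqref{e:FPgen} is linear in $f$ (the coefficients $\rmA, \rmb$ are fixed functions of $(t,x,v)$ independent of the solution), the difference $h = f - g$ of any two weak solutions in class $L^\infty([0,T); L^1\cap L^2) \cap C([0,T); \cD')$ is again a weak solution of \eqref{e:FPgen}, now with initial datum $h_0 = f_0 - g_0$, and $h$ lies in the same class.

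With $h$ so defined, I would simply apply the previous lemma to $h$ directly. This yields
\[
\|h(t)\|_2^2 + \frac{\l}{2} \int_0^t \|\n_v h\|_2^2 \ds \leq \|h_0\|_2^2 + c \int_0^t \|h(s)\|_2^2 \ds.
\]
Dropping the nonnegative dissipation term on the left and invoking Gr\"onwall's lemma on $t \mapsto \|h(t)\|_2^2$ gives
\[
\|h(t)\|_2^2 \leq \|h_0\|_2^2 \, e^{2ct}, \qquad t\in[0,T),
\]
which is the claimed stability estimate with $C_T = e^{cT}$. Uniqueness is then the special case $f_0 = g_0$, forcing $h \equiv 0$.

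The only small point worth double-checking is that the derivation of \eqref{e:einAb} uses nothing specific about the sign of $f$ or a lower bound like nonnegativity: the mollification argument, the weighted testing with $f_\e \w_{0,R}$, and the commutator estimates all go through for arbitrary (possibly sign-changing) $L^1\cap L^2$ weak solutions. Since $h$ may change sign, this is the step I would verify carefully, but inspection of the proof confirms that every estimate there is based on $L^2$-type quantities of the solution and its mollified transport/diffusion/drift commutators, all of which treat the solution as a scalar without sign constraints. Hence no obstacle arises, and the corollary follows immediately.
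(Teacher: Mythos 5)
Your proof is correct and takes exactly the approach the paper intends: the corollary is stated without proof precisely because, by linearity of \eqref{e:FPgen}, the difference $h=f-g$ is again a weak solution in the same class, and \eqref{e:einAb} together with Gr\"onwall gives the bound. Your remark that the derivation of \eqref{e:einAb} imposes no sign condition on the solution is the right thing to verify, and your verification is accurate.
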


 In order to prove \prop{p:reg} it would be convenient to know a priori that $f$ is already smooth just so we can differentiate the right hand side of the equation.  To do that we need to have an approximated sequence of smooth solutions and ensure that the estimates on that sequence obey the bound \eqref{e:reg} independent of approximation parameter. So, let us consider a sequence of smoothed and compactly supported initial data $f^{(n)}_0 \to f_0$ in $L^1 \cap L^2$. For such data the equation \eqref{e:FPgen} is globally well-posed in any $H^m_q$-space. This follows from the standard analysis. Indeed,  the regularized equation
 \begin{equation}\label{e:FPgenR}
\p_t f + v \cdot \n_x f = \n_v (\rmA \n_v f) + \frac{1}{R} \D_x f + \n_v \cdot (\w_{0,R} \rmb f),
\end{equation}
 is a generator of an analytic $C_0$-semigroup in any classical $H^s$-space, and solutions from compactly supported data decay at infinity exponentially fast as follows from the bounds on the Green's function, see for example \cite{Krylov-book}. Thus, the solutions will remain in any $H^M_Q$ at all times. The fact that they remain in $H^M_Q$ uniformly in $R$ on any finite time interval $[0,T)$ will follow from our Sobolev estimate \eqref{e:hd} performed similarly  on \eqref{e:FPgenR}.  The classical compactness argument produces a solution to  \eqref{e:FPgen} in the class $f^{(n)} \in L^\infty H^M_Q$, $\n_v f^{(n)} \in L^2 H^M_Q$ on any $[0,T)$.  If we prove \eqref{e:reg} uniformly in $n$, i.e. independent of regularity of the initial data, then by the contraction bound \eqref{e:contr}, $f^{(n)} \to f$ in $L^2$, and by weak lower semi-continuity $\|f\|_{H^m_q} \leq \liminf_n \|f^{(n)}\|_{H^m_q}$, and \prop{p:reg} would be proved.
 
In conclusion, while proving \prop{p:reg}we can assume a priori that the solution belongs to $f \in H^M_Q$, for $M \gg m$ and $Q \gg q$.

 \begin{proof}[Proof of \prop{p:reg}] By the above reasoning, we can formally assume that $f \in H^M_Q$, for $M \gg m$ and $Q \gg q$, so that all the estimates below are justified.
 
Let us introduce the dissipation terms
\[
\rmd^{k,l}_q = \rmh^{k,l+1}_{q}, \quad \rmd = \sum_{2 k + l \leq 2m}   \rmd^{k,l}_{q - 2k - l},
\]

In what follows, all the constants $c_1,c_2,\ldots$ depend only on the regularity of $\rmA, \rmb$.

\begin{lemma} \label{l:n} We have 
\begin{equation}\label{ }
\dot{\rmh}^{0,0}_q \leq -c_1 \rmd^{0,0}_q + c_2 \rmh^{0,0}_q.
\end{equation}
Consequently, the $L^2_q$-momentum $\rmh^{0,0}_q$ remains bounded on $[0,T)$.
\end{lemma}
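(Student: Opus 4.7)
The plan is to derive the stated inequality by pairing \eqref{e:FPgen} with the test function $\jap{v}^q f$, which is admissible under the standing reduction (proved just above the lemma) to the case $f \in H^M_Q$ with $M,Q \gg m,q$. The transport term contributes $\tfrac12 \int_\domain \jap{v}^q v \cdot \n_x f^2 \dv\dx$, which vanishes by integration by parts in $x$ on the periodic torus since the weight is $x$-independent. It then remains to show that the diffusion contribution produces the negative $-c_1 \rmd^{0,0}_q$, while the drift and cross terms are absorbed into $c_2 \rmh^{0,0}_q$.

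For the diffusion term, I would integrate by parts in $v$ to obtain
\[
\int_\domain \jap{v}^q f \, \n_v (\rmA \n_v f) \dv\dx = - \int_\domain \jap{v}^q \rmA \n_v f \cdot \n_v f \dv\dx - \int_\domain f \, \rmA \n_v f \cdot \n_v \jap{v}^q \dv\dx.
\]
The first piece is bounded from above by $-\l \rmd^{0,0}_q$ by \eqref{e:Ab1}. For the cross piece one uses $|\n_v \jap{v}^q| \leq C \jap{v}^{q-1}$ together with \eqref{e:Ab1} and the \CS\ to extract $\e\, \rmd^{0,0}_q + C_\e\, \rmh^{0,0}_{q-2}$; choosing $\e$ small absorbs the first into the main dissipation, and the second is controlled by $\rmh^{0,0}_q$ since $\jap{v}^{q-2} \leq \jap{v}^q$.

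The drift term is the delicate one because $\rmb$ grows linearly in $v$: a naive application of \CS\ on $\int \jap{v}^q f\, \rmb \cdot \n_v f$ produces an uncontrolled moment $\int \jap{v}^{q+1} f^2$ that no $\e$-absorption into $\rmd^{0,0}_q$ can tame. The resolution, which is the main technical obstacle, is to symmetrize on $f^2$. Integrating by parts once against $\n_v \cdot (\rmb f)$ and once using $f \n_v f = \tfrac12 \n_v f^2$, one obtains
\[
\int_\domain \jap{v}^q f \, \n_v \cdot (\rmb f) \dv\dx = -\tfrac12 \int_\domain (\rmb \cdot \n_v \jap{v}^q) f^2 \dv\dx + \tfrac12 \int_\domain \jap{v}^q (\n_v \cdot \rmb) f^2 \dv\dx,
\]
and both integrals are now of order $\rmh^{0,0}_q$: the first because $|\rmb \cdot \n_v \jap{v}^q| \lesssim \jap{v} \cdot \jap{v}^{q-1} = \jap{v}^q$, using the linear growth bound $|\rmb| \lesssim \jap{v}$ implied by \eqref{e:Ab2}; the second because $\|\n_v \cdot \rmb\|_\infty \leq C$ by the same assumption on a single $v$-derivative. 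The key point is that every surviving occurrence of $\rmb$ has been paired either with a $v$-derivative of itself, or with the weight gradient $\n_v \jap{v}^q$ which pays one full power of $\jap{v}$ for the linear growth.

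Assembling the three contributions yields $\dot{\rmh}^{0,0}_q \leq -c_1 \rmd^{0,0}_q + c_2 \rmh^{0,0}_q$. Dropping the nonpositive dissipation and applying \GL\ gives $\rmh^{0,0}_q(t) \leq e^{c_2 t}\, \rmh^{0,0}_q(0) \leq C_T$ on $[0,T)$, which is the claimed propagation of the $L^2_q$-moment.
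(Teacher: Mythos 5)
Your proof is correct and takes exactly the route the paper intends: test \eqref{e:FPgen} with $\jap{v}^q f$, integrate the diffusion by parts to extract $-\l\,\rmd^{0,0}_q$ plus an absorbable cross term, and handle the linearly growing drift by symmetrizing on $f^2$ so that $\rmb$ always appears either differentiated in $v$ or paired with $\n_v\jap{v}^q$, which lowers a power of $\jap{v}$. The paper's own proof simply states the computation is trivial after testing with $f\jap{v}^q$; you have filled in precisely the computation it has in mind.
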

\begin{proof}  The proof is trivial by testing the equation with $f \jap{v}^q$.
\end{proof}

\begin{lemma} We have 
\begin{equation}\label{e:hd}
\dot{\rmh} \leq -c_3 \rmd + c_4 \rmh.
\end{equation}
\end{lemma}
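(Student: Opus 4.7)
The plan is to differentiate \eqref{e:FPgen} by $\partial^\bk_x \partial^\bl_v$ for each $(\bk, \bl)$ with $2|\bk|+|\bl| \leq 2m$, test against $\partial^\bk_x \partial^\bl_v f \cdot \jap{v}^{q-2|\bk|-|\bl|}$, and sum. Writing $k=|\bk|$, $l=|\bl|$, and $w = \jap{v}^{q-2k-l}$, this produces an evolution identity for $\rmh^{k,l}_{q-2k-l}$ with diffusion, drift, and transport contributions. The diffusion, after integration by parts in $v$ and the ellipticity bound \eqref{e:Ab1}, yields the principal piece $-2\lambda\,\rmd^{k,l}_{q-2k-l}$; its error terms coming from $|\nabla_v w| \lesssim \jap{v}^{-1}w$ and from the commutators $[\partial^\bk_x\partial^\bl_v,\rmA]$ (bounded via \eqref{e:Ab2}) absorb as $\e\rmd^{k,l}_{q-2k-l} + C_\e\rmh^{k,l}_{q-2k-l}$ by Young's inequality. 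The drift $\nabla_v\cdot(\rmb f)$ is treated analogously: $|\rmb|\lesssim\jap{v}$ pairs with $\nabla_v w$ to produce only zero-order terms, while higher derivatives of $\rmb$ are uniformly controlled by \eqref{e:Ab2}, leading again to contributions of type $\e\rmd^{k,l}_{q-2k-l} + C_\e\rmh^{k,l}_{q-2k-l}$.

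The main obstruction is the transport. Since $w$ is $x$-independent and $\O$ is periodic, $\int v\cdot \nabla_x (\partial^\bk_x\partial^\bl_v f)\cdot \partial^\bk_x\partial^\bl_v f \,w = 0$, leaving only the cross term
\[
C_{\bk,\bl} = \sum_j l_j \int \partial^{\bk+e_j}_x\partial^{\bl-e_j}_v f \cdot \partial^\bk_x\partial^\bl_v f \cdot w \, dx\, dv.
\]
Its target index $(k+1, l-1)$ carries natural weight $\jap{v}^{q-2k-l-1}$ in $\rmh$, one power less than $w$, so a naive Cauchy--Schwarz does not close into $\rmh$. The key algebraic identity
\[
\rmh^{k+1, l-1}_{q-2k-l} = \rmd^{k+1, l-2}_{q-2k-l},
\]
an immediate consequence of $\rmd^{k,l}_q = \rmh^{k,l+1}_q$, places exactly this quantity in $\rmd$; indeed $(k+1, l-2)$ is in the hierarchy since $2(k+1) + (l-2) = 2k+l \leq 2m$. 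For $l \geq 2$, Cauchy--Schwarz and Young thus give $|C_{\bk, \bl}| \leq \e\rmd + C_\e\rmh$.

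The delicate step, and the one I expect to be the main obstacle, is the case $l = 1$, where $(k+1, -1)$ is outside the hierarchy so the identity above does not apply. I would first integrate by parts in $x_j$ to obtain
\[
C_{\bk,\bl} = -\sum_j \int \partial^\bk_x \partial^{\bl-e_j}_v f \cdot \partial^{\bk+e_j}_x\partial^\bl_v f \cdot \jap{v}^{q-2k-1}\, dx\, dv,
\]
then split the weight symmetrically as $\jap{v}^{q-2k-1} = \jap{v}^{(q-2k)/2}\cdot \jap{v}^{(q-2k-2)/2}$. Cauchy--Schwarz yields the factor $(\rmh^{k,0}_{q-2k})^{1/2}$ at the natural position of $(k,0)$ in $\rmh$, paired with $(\rmh^{k+1, 1}_{q-2k-2})^{1/2} = (\rmd^{k+1, 0}_{q-2(k+1)})^{1/2}$, which lies in $\rmd$ because $(k+1, 0)$ is in the hierarchy whenever $(k, 1)$ is (both amount to $k \leq m-1$). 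Young's inequality then produces $|C_{\bk,\bl}| \leq \e\rmd + C_\e\rmh$. Summing the contributions of all $(\bk, \bl)$ with $2|\bk|+|\bl| \leq 2m$ and choosing $\e$ small enough to be absorbed into the diffusion dissipation establishes \eqref{e:hd}, with constants depending only on $n$, $m$, $q$, $\lambda$, $\Lambda$, and the regularity of $\rmA$, $\rmb$ in \eqref{e:Ab1}--\eqref{e:Ab2}.
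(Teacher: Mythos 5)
Your proposal is correct and follows essentially the same strategy as the paper: differentiate and test against $\p_x^{\bk}\p_v^{\bl} f\,\jap{v}^{q-2k-l}$, extract the dissipation from \eqref{e:Ab1}, absorb the weight and commutator errors from $\rmA$ and $\rmb$ via Young's inequality and the regressive weights, and handle the transport commutator by recognizing that the cross term lands in $\rmd$ with the correct weight, with the case $l=1$ requiring an integration by parts and a symmetric weight split into $\rmh^{k,0}_{q-2k}$ and $\rmd^{k+1,0}_{q-2(k+1)}$. The one small simplification over the paper is that for $l\geq 2$ you invoke the identity $\rmh^{k+1,l-1}_{q-2k-l}=\rmd^{k+1,l-2}_{q-2(k+1)-(l-2)}$ directly rather than performing the paper's additional integration by parts in $v$; both routes are valid and give the same bound.
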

\begin{proof}  
 Let us fix  a pair of indexes $2k + l \leq 2m$, apply $  \p_x^\bk \p_v^\bl  $ to \eqref{e:FPgen} and test with  $\p_x^\bk \p_v^\bl f \jap{v}^{q - 2k - l}$. We have
\begin{multline*}
\dot{\rmh}^{k,l}_{q - 2k - l} + \int_\domain  \p_x^\bk \p_v^\bl( v \cdot \n_x f) \p_x^\bk \p_v^\bl f \jap{v}^{q - 2k - l} \dv \dx \\
 = -  \int_\domain   \p_x^\bk \p_v^\bl   (\rmA \n_v f) \cdot \n_v ( \p_x^\bk \p_v^\bl f  \jap{v}^{q - 2k - l} ) \dv \dx  -   \int_\domain    \p_x^\bk \p_v^\bl (\rmb f) \cdot \n_v  ( \p_x^\bk \p_v^\bl f  \jap{v}^{q - 2k - l} )  \dv \dx.
\end{multline*}
Let us denote the terms according to their placement in the equation above,
\[
\dot{\rmh}^{k,l}_{q - 2k - l} + X =  D + B.
\]

Let us start with the dissipative term $D$, denoting the commutator
\[
[ \p_x^\bk \p_v^\bl, \rmA \n_v] f =   \p_x^\bk \p_v^\bl   (\rmA \n_v f) -   \rmA \n_v  ( \p_x^\bk \p_v^\bl f) ,
\]

\begin{equation*}\label{}
\begin{split}
D = & -  \int_\domain    \rmA \n_v  ( \p_x^\bk \p_v^\bl f) \cdot \n_v ( \p_x^\bk \p_v^\bl f  \jap{v}^{q - 2k - l} ) \dv \dx  - \int_\domain  [ \p_x^\bk \p_v^\bl, \rmA \n_v] f  \cdot \n_v ( \p_x^\bk \p_v^\bl f  \jap{v}^{q - 2k - l} ) \dv \dx\\
 = &  -  \int_\domain    \rmA \n_v  ( \p_x^\bk \p_v^\bl f) \cdot \n_v ( \p_x^\bk \p_v^\bl f ) \jap{v}^{q - 2k - l} \dv \dx  -  \int_\domain    \rmA \n_v  ( \p_x^\bk \p_v^\bl f) \cdot  \p_x^\bk \p_v^\bl f \n_v  \jap{v}^{q - 2k - l}  \dv \dx \\  
 & -   \int_\domain  [ \p_x^\bk \p_v^\bl, \rmA \n_v] f  \cdot \n_v ( \p_x^\bk \p_v^\bl f )  \jap{v}^{q - 2k - l}  \dv \dx - \int_\domain  [ \p_x^\bk \p_v^\bl, \rmA \n_v] f  \cdot \p_x^\bk \p_v^\bl f \n_v  \jap{v}^{q - 2k - l}  \dv \dx\\
 = & :  D_1 + D_2 + D_3 + D_4. 
\end{split}
\end{equation*}
Thanks to \eqref{e:Ab1}, we have
\[
D_1 \leq - \rmd^{k,l}_{q - 2k - l}.
\]
Continuing to $D_2$,
\[
|D_2| \lesssim \int_\domain | \n_v  ( \p_x^\bk \p_v^\bl f) | | \p_x^\bk \p_v^\bl f |  \jap{v}^{q - 2k - l - 1}  \dv \dx \leq \e \rmd^{k,l}_{q - 2k - l} + c \rmh^{k,l}_{q - 2k - l},
\]
where $\e>0$ is small to be determined later. As to the commutator terms, let us expand
\[
[ \p_x^\bk \p_v^\bl, \rmA \n_v] f = \sum_{\substack{\bk' \leq \bk, \bl' \leq \bl \\ |\bk'|+|\bl'| < k + l}} \p_x^{\bk - \bk'} \p_v^{\bl - \bl'} \rmA \n_v   \p_x^{\bk'} \p_v^{\bl'}   f ,
\]
hence,
\[
 \int_\domain | [ \p_x^\bk \p_v^\bl, \rmA \n_v] f |^2 \jap{v}^{q - 2k - l}  \dv \dx \lesssim \sum_{\substack{\bk' \leq \bk, \bl' \leq \bl \\ |\bk'|+|\bl'| < k + l}} \int_\domain  |  \p_x^{\bk'} \p_v^{\bl' + 1}   f |^2  \jap{v}^{q - 2k - l}    \dv \dx
 \]
 In the sum above, if $l' < l$, then all the derivative are of order lower than $\p_x^\bk \p_v$, and consequently, all these terms are bounded by $\rmh$. For $l' = l$, we have $k'<k$, and thus, $2k'+l+1 \leq 2k +l -1<2m$. So, the indexes are within the allowed range, and as for the weight can be increased to accommodate for lower order derivative,  $\jap{v}^{q - 2k - l}\leq  \jap{v}^{q - 2k' - l-1}$. So, again, this term is bounded by  $\rmh$. Thus,
 \[
  \int_\domain | [ \p_x^\bk \p_v^\bl, \rmA \n_v] f |^2 \jap{v}^{q - 2k - l}  \dv \dx \lesssim \rmh,
 \]
and consequently,
\[
|D_3| \leq \e \rmd^{k,l}_{q - 2k - l} + c \rmh.
\]
With the obtained estimate on the commutator $D_4$ is even of smaller order,
\[
|D_4| \lesssim \rmh.
\]

In conclusion,
\[
D \leq - (1-\e) \rmd^{k,l}_{q - 2k - l} + c \rmh.
\]

Moving to the $B$-term, we write
\begin{equation*}\label{}
\begin{split}
-B =&    \int_\domain    \p_x^\bk \p_v^\bl (\rmb f) \cdot \n_v  ( \p_x^\bk \p_v^\bl f  \jap{v}^{q - 2k - l} )  \dv \dx\\
= &    \int_\domain    \p_x^\bk \p_v^\bl (\rmb f) \cdot \n_v  ( \p_x^\bk \p_v^\bl f )  \jap{v}^{q - 2k - l}  \dv \dx +     \int_\domain    \p_x^\bk \p_v^\bl (\rmb f) \cdot  \p_x^\bk \p_v^\bl f \n_v  ( \jap{v}^{q - 2k - l} )  \dv \dx.
\end{split}
\end{equation*}
Notice that 
\[
| \p_x^\bk \p_v^\bl (\rmb f)\n_v  ( \jap{v}^{q - 2k - l} ) | \lesssim \jap{v}^{q - 2k - l}  \sum_{\bk' \leq \bk, \bl' \leq \bl} |\p_x^{\bk'} \p_v^{\bl'} f|,
\]
and therefore, the second term is bounded by $\rmh$.  As to the first term, we write 
\begin{multline*}
 \int_\domain    \p_x^\bk \p_v^\bl (\rmb f) \cdot \n_v  ( \p_x^\bk \p_v^\bl f )  \jap{v}^{q - 2k - l}  \dv \dx =  \int_\domain     \rmb \p_x^\bk \p_v^\bl f \cdot \n_v  ( \p_x^\bk \p_v^\bl f )  \jap{v}^{q - 2k - l}  \dv \dx \\
 +  \int_\domain  [ \p_x^\bk \p_v^\bl (\rmb f) -   \rmb \p_x^\bk \p_v^\bl f ] \cdot \n_v  ( \p_x^\bk \p_v^\bl f )  \jap{v}^{q - 2k - l}  \dv \dx : = B_1 + B_2.
\end{multline*}
After integration by parts in $B_1$,
\[
B_1 = - \frac12 \int_\domain   \n_v \cdot  ( \jap{v}^{q - 2k - l}  \rmb)  |  \p_x^\bk \p_v^\bl f |^2   \dv \dx \leq \rmh.
\]
For $B_2$, we need to make use of regressive weights. Let us expand the commutator,
\[
 [ \p_x^\bk \p_v^\bl (\rmb f) -   \rmb \p_x^\bk \p_v^\bl f ] = \sum_{\substack{\bk' \leq \bk, \bl' \leq \bl \\ |\bk'|+|\bl'| < k + l}} \p_x^{\bk - \bk'} \p_v^{\bl - \bl'} \rmb \  \p_x^{\bk'} \p_v^{\bl'}   f .
\]
The part where $l'<l$ involves globally bounded coefficients $ \p_x^{\bk - \bk'} \p_v^{\bl - \bl'} \rmb$, and hence this part gives rise to the term
\[
\lesssim  \int_\domain |  \p_x^{\bk'} \p_v^{\bl'}   f | | \n_v  ( \p_x^\bk \p_v^\bl f ) | \jap{v}^{q - 2k - l}  \dv \dx \leq \e \rmd^{k,l}_{q - 2k - l} + c \rmh.
\]
In the case $\bl' = \bl$, $\bk'<\bk$, we have
\[
| \p_x^{\bk - \bk'}  \rmb \  \p_x^{\bk'} \p_v^{\bl}   f | \lesssim \jap{v} | \p_x^{\bk'} \p_v^{\bl}   f |,
 \]
and the corresponding integral is bounded by 
\[
\lesssim  \int_\domain |  \p_x^{\bk'} \p_v^{\bl}   f | | \n_v  ( \p_x^\bk \p_v^\bl f ) | \jap{v}^{q - 2k - l + 1}  \dv \dx \leq \e \rmd^{k,l}_{q - 2k - l} +  \int_\domain |  \p_x^{\bk'} \p_v^{\bl}   f |^2 \jap{v}^{q - 2(k-1) - l}  \dv \dx.
\]
Since $|\bk'| \leq k-1$, the weight is consistent with the order of the derivative, and hence the last term is bounded by $\rmh$.

In conclusion,
\[
B \leq 2\e \rmd^{k,l}_{q - 2k - l} + c \rmh.
\]

Lastly, the transport term $X$. Canceling the component when all derivatives fall on $f$, we have
\[
X = \int_\domain [ \p_x^\bk \p_v^\bl, v\cdot \n_x] f \, \p_x^\bk \p_v^\bl f \jap{v}^{q - 2k - l} \dv \dx.
\]
Let us expand the commutator. Here we isolate three cases, $l =0,1$ and $l  \geq 2$. When $l=0$, then obviously,
$[ \p_x^\bk,  v\cdot \n_x] = 0$. When $l=1$, we have
\[
[ \p_x^\bk \p_v, v\cdot \n_x] f =\p_x^{ \bk +1}  f. 
\]
In this case,

\begin{equation*}\label{}
\begin{split}
X = &\int_\domain \jap{v}^{q-2k-1}  \p_x^{\bk+1} f \p_x^{\bk} \p_v f  \dv\dx = - \int_\domain \jap{v}^{q- 2k-1}  \p_x^{\bk+1} \p_v  f \p_x^{\bk} f  \dv\dx \\
&- \underbrace{ \int_\domain \p_v \jap{v}^{q- 2k-1}  \p_x^{\bk+1}  f \p_x^{\bk} f  \dv\dx}_{=0} \\
 = & - \int_\domain \jap{v}^{\frac{q- 2k}{2}-1}  \p_x^{\bk+1} \p_v  f   \jap{v}^{\frac{q- 2k}{2}} \p_x^{\bk} f  \dv\dx \leq \e  \rmd^{k+1,0}_{q - 2 k -2} + c_1 \rmh^{k,0}_{q - 2k} \leq \e  \rmd^{k+1,0}_{q - 2k -2} + c_1 \rmh.
\end{split}
\end{equation*} 
Notice that since $2k+1 < 2m$, then $2(k +1) \leq 2m$, and hence the dissipative term is within the allowed range of parameters.  

Let us consider the case when $l  \geq 2$.  Then
\[
X = \int_\domain \jap{v}^{q-2k-l}  \p_x^{\bk+1} \p_v^{\bl-1} f \p_x^{\bk} \p^\bl_v f  \dv\dx.
\]
Since $l \geq 2$, we can relieve the first $f$-component of one more derivative in $v$:
\begin{multline*}
X= -  \int_\domain \jap{v}^{q- 2k-l}  \p_x^{\bk+1} \p_v^{\bl-2} f \p_x^{\bk} \p^{\bl+1}_v f  \dv\dx -  \int_\domain \p_v \jap{v}^{q-2k-l}  \p_x^{\bk+1} \p_v^{\bl-2} f \p_x^{\bk} \p^\bl_v f  \dv\dx \\
\lesssim \rmh^{k+1,l-2}_{q-2k-l} + \d \rmd^{k,l}_{q-2k-l} +  \rmh^{k,l}_{q-2k-l}.
\end{multline*}
Note that $2(k+1) + l-2 = 2k + l \leq 2m$, so the indexes are within the allowed range. Thus,
\[
X \leq  \e \rmd^{k,l}_{q-2k-l} +  c \rmh.
\]

Summing up over all the indexes with $2k + l \leq m$, we obtain
\[
\dot{\rmh} \leq -(1 - c\e) \rmd + c \rmh,
\]
which for small $\d>0$ proves the result.  
 
\end{proof}

Next let us introduce the mixed terms
\[
\rmm(f) = \sum_{1\leq k \leq m} \rmm^{k}_{q - 2k}(f), \quad \rmm^{k}_{q-2k}(f) = \sum_{|\bk| = k}  \int_\domain  \jap{v}^{q-2k}  \p^{\bk}_{x} f  \cdot  \p^{\bk-1}_{x} \p_v f  \dv\dx, 
\]
where the convention for $\p^{\bk-1}_{x} \p_v$ is to use the same index for $-1$ in $x$ as $1$ for $v$.
We further split $\rmh$ into $x$-, $v$-, and $0$-order components
\begin{equation*}\label{}
\begin{split}
\rmh &= \rmx + \rmv + \rmn,\\
\rmx = \sum_{1\leq k\leq m}   \rmh^{k,0}_{q - 2k}(f), \quad \rmv & =   \sum_{\substack{2 k + l \leq 2m \\ l \geq 1}}   \rmh^{k,l}_{q - 2k - l}(f), \quad \rmn  = \rmh^{0,0}_q.
\end{split}
\end{equation*}
By \lem{l:n} the $0$-order term  $\rmn$ remains bounded on the entire interval $[0,T)$. For the kinetic term $\rmv$ we have the following bound
\begin{equation}\label{e:vlessd}
\rmv \leq \rmd.
\end{equation}
Indeed,
\[
\rmv =  \sum_{2 k + l' \leq 2m - 1}   \rmh^{k,l'+1}_{q - 2k - l' -1} \leq  \sum_{2 k + l' \leq 2m - 1}   \rmh^{k,l'+1}_{q - 2k - l' } = \sum_{2 k + l' \leq 2m - 1}   \rmd^{k,l'}_{q - 2k - l' } \leq \rmd.
\]

\begin{lemma}\label{ }
We have 
\begin{equation}\label{ }
\dot{\rmm} \leq - c_3 \rmx + c_4 \rmv + c_5 \rmv^{1/2} \rmd^{1/2} + c_6,
\end{equation}
for some $c_3,c_4,c_5,c_6 >0$ depending only on the regularity of $\rmA,\rmb$.
\end{lemma}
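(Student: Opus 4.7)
The statement is the classical hypocoercive identity that extracts $x$-dissipation from a mixed $xv$-gradient term via the transport operator. The strategy is to differentiate $\rmm^k_{q-2k}$ in time, substitute \eqref{e:FPgen}, and split the result into transport, diffusion, and drift contributions. For a multi-index $\bk$ with $|\bk|=k\geq 1$, write $\bk=\bk'+e_j$ with $|\bk'|=k-1$, and compute
\[
\ddt \int_\domain \jap{v}^{q-2k} \p^\bk_x f \cdot \p^{\bk'}_x \p_{v,j} f \dv\dx = T^\bk + \cD^\bk + \cB^\bk,
\]
where the three terms collect the respective contributions of $v\cdot\n_x f$, $\n_v(\rmA\n_v f)$, and $\n_v\cdot(\rmb f)$.

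For the transport term, since $v$ commutes with $\p_x$, applying $\p^\bk_x$ to $v\cdot\n_x f$ gives $v\cdot\n_x \p^\bk_x f$, while applying $\p^{\bk'}_x \p_{v,j}$ produces $v\cdot \n_x \p^{\bk'}_x \p_{v,j} f + \p^\bk_x f$. The two second-order $xv$-contributions cancel after a single integration by parts in $x_j$, leaving the crucial
\[
T^\bk = -\int_\domain \jap{v}^{q-2k} |\p^\bk_x f|^2 \dv\dx + R^\bk,
\]
where $R^\bk$ arises from the $v$-derivative falling on the weight $\jap{v}^{q-2k}$. Using $|\n_v\jap{v}^{q-2k}|\lesssim \jap{v}^{q-2k-1}$ and pairing $\jap{v}^{q-2k-1} v_i \p^{\bk+e_i}_x f$ against $\p^{\bk'}_x\p_{v,j}f$ via Cauchy--Schwarz yields $|R^\bk|\leq \e \rmh^{k,0}_{q-2k}(f) + c\, \rmh^{k-1,1}_{q-2k}(f) \leq \e\rmx + c\rmv$, where the weight bookkeeping $2(k-1)+1+1 \leq 2k$ ensures the $\rmv$-term fits the definition of $\rmh$ through the regressive weights.

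The diffusion contribution $\cD^\bk$ is integrated by parts in $v$, producing a pairing between $\n_v \p^\bk_x f$ and $\p^{\bk'}_x \p_v^2 f$; Cauchy--Schwarz gives $|\cD^\bk|\lesssim \rmv^{1/2}\rmd^{1/2}$, with commutators of $\p^\bk_x$ against $\rmA$ bounded using \eqref{e:Ab2}. The drift contribution $\cB^\bk$ is the most delicate because of the linear $v$-growth allowed for $\rmb$ in \eqref{e:Ab2}. After integration by parts in $v$, one pairs $\rmb \p^{\bk'}_x \p_{v,j} f$ with $\n_v \p^\bk_x f$ and with $\p^\bk_x f \, \n_v \jap{v}^{q-2k}$: the former yields $\lesssim \rmv^{1/2}\rmd^{1/2}$ since $|\rmb|\lesssim \jap{v}$ is absorbed by the weight loss $\jap{v}^{q-2k}/\jap{v}^{q-2k-1}$, and the latter yields $\lesssim \rmv$. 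Commutators $[\p^\bk_x,\rmb]$ involve only $x$-derivatives of $\rmb$, which still grow only linearly in $v$, and the same weight matching applies; bottom-level commutators reduce to $\rmn=\rmh^{0,0}_q$, which is bounded absolutely by \lem{l:n} and furnishes the constant $c_6$.

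Summing over $|\bk|=k$ and $1\leq k\leq m$ and choosing $\e$ small enough to absorb the $\e\rmx$-piece of $R^\bk$ and any $\rmx^{1/2}$-type contributions from $\cB^\bk$ via Young's inequality yields the asserted inequality. The main obstacle is the precise weight bookkeeping: the regressive weight $\jap{v}^{q-2k}$ in $\rmm^k_{q-2k}$ must exactly match the order of the derivatives produced by the calculation so that every remainder term fits either $\rmv$, $\rmv^{1/2}\rmd^{1/2}$, or $\rmn$. The deliberate choice of a single $v$-derivative in $\rmm^k_{q-2k}$ is precisely what keeps the right-hand side first-order in $v$-derivatives (for $\rmv$) and second-order (for $\rmd^{1/2}$), never exceeding what the hypocoercivity scheme can absorb.
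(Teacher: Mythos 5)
Your overall decomposition into transport, diffusion, and drift contributions, and your identification of the transport term as the source of the good $-\rmx$ term and of the zero-order contributions as furnishing the constant via $\rmn$ and \lem{l:n}, is the right skeleton and coincides with the paper's. There are, however, two errors — one cosmetic, one fatal.

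The cosmetic one: your claimed remainder $R^\bk$ in the transport term does not exist. The off-diagonal pieces combine into $-\int_\domain \jap{v}^{q-2k}\, v\cdot\n_x\bigl(\p^\bk_x f\cdot\p^{\bk'}_x\p_{v,j} f\bigr)\dv\dx$, and since $v$ and $\jap{v}^{q-2k}$ are $x$-independent and $\O=\T^n$, this vanishes identically. The exact identity is $\dot{\rmm}^k_{q-2k} = -\rmh^{k,0}_{q-2k} + D_{k,q} + B_{k,q}$; no $v$-derivative ever hits the weight during the $x$-integration-by-parts.

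The fatal one is in the drift term. You propose to integrate by parts one of the two pieces, obtaining the pairing of $\rmb\,\p^{\bk'}_x\p_{v,j} f$ against $\n_v\p^\bk_x f$ with weight $\jap{v}^{q-2k}$, and you assert that because $|\rmb|\lesssim\jap{v}$ this is $\lesssim\rmv^{1/2}\rmd^{1/2}$, the growth being "absorbed by the weight loss $\jap{v}^{q-2k}/\jap{v}^{q-2k-1}$". This does not add up. The maximal admissible weights are $\jap{v}^{q-2k}$ for $\rmh^{k,1}\subset\rmd$ and $\jap{v}^{q-2k+1}$ for $\rmh^{k-1,1}\subset\rmv$, so the product $\rmv^{1/2}\rmd^{1/2}$ can absorb at most $\jap{v}^{q-2k+\frac12}$, whereas your integrand carries $\jap{v}^{q-2k+1}$. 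The estimate fails by $\jap{v}^{1/2}$, and this cannot be fixed by shifting the weight to the other factor (you then exceed the $\rmv$ budget and land in $\rmd^{1/2}\rmd^{1/2}=\rmd$, which is useless on the right-hand side). The paper's way out is the structural cancellation you are missing: when one keeps the two "all derivatives on $f$" pieces together, they form a total derivative
\begin{equation*}
\int_\domain \jap{v}^{q-2k}\,\rmb\cdot\n_v\bigl(\p^\bk_x f\,\p^{\bk'}_x\p_{v,j} f\bigr)\dv\dx
= -\int_\domain \n_v\cdot\bigl(\jap{v}^{q-2k}\rmb\bigr)\,\p^\bk_x f\,\p^{\bk'}_x\p_{v,j} f\,\dv\dx,
\end{equation*}
and $|\n_v\cdot(\jap{v}^{q-2k}\rmb)|\lesssim\jap{v}^{q-2k}$ is now weight-neutral, giving $\lesssim\e\rmx+c\rmv$. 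Without this cancellation the term with a second-order $v$-derivative on $f$ and an un-differentiated $\rmb$ cannot be closed at the stated level of weights. Note that the same structure is not needed in the diffusion term because $\rmA$ is bounded rather than linearly growing in $v$; this is precisely why the drift is, as you say, the most delicate piece, but you have not supplied the mechanism that makes it tractable.
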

\begin{proof}
We start by observing that the total contribution of the $x$-transport term in the equation for $\rmm^{k}_{q - 2k}$ is the negative term $ - \rmh^{k,0}_{q - 2k}$. The contribution of the dissipation and the $\rmb$-drift will be denoted $D_{k,q}$ and $B_{k,q}$, respectively:
\[
\dot{\rmm}^{k}_{q - 2k} = - \rmh^{k,0}_{q - 2k} +D_{k,q} + B_{k,q}.
\]
Thus, summing over $1\leq k\leq m$, we obtain
\begin{equation}\label{e:mDB}
\dot{\rmm} = - \rmx + D + B.
\end{equation}

The rest of the proof is devoted to estimating the remainder terms $D,B$. 

So, we fix $1\leq k\leq m$, and start with the dissipation term
\[
\begin{split}
D_{k,q} &= \int_\domain \jap{v}^{q-2k} \p_x^{\bk}( \rmA \n_v f)  \cdot  \p^{\bk-1}_{x} \p_v \n_v f  \dv\dx +   \int_\domain \n_v \jap{v}^{q-2k} \p_x^{\bk}( \rmA \n_v f)  \cdot  \p^{\bk-1}_{x} \p_v f  \dv\dx \\
&+ \int_\domain \n_v \jap{v}^{q-2k}  \p^{\bk}_{x}  f  \cdot  \p^{\bk-1}_{x} \p_v (\rmA \n_v f)  \dv\dx + \int_\domain \jap{v}^{q-2k} \n_v  \p^{\bk}_{x}  f  \cdot  \p^{\bk-1}_{x} \p_v (\rmA \n_v f)  \dv\dx\\
& : = D^1 + D^2 + D^3 + D^4.
\end{split}
\]
In $D^1$, the second integrand $ \p^{\bk-1}_{x} \p_v \n_v f $ has the indexing $2(k-1) + 2 = 2 k$, which is consistent with the power of the weight. So, expanding the $\p_x^\bk$ partials in the first integrant, we naturally single out the all-on-$f$ term $\rmA \n_v \p_x^{\bk} f$, which results in a bound $(\rmd_{q-2k}^{k,0})^{1/2} \rmv^{1/2} \leq  \rmd^{1/2} \rmv^{1/2}$. The rest is bounded by $\rmv$
\[
D^1 \lesssim \rmd^{1/2} \rmv^{1/2} + \rmv.
\]
Next, the term $D^2$ is of lower order and is bounded by, after moving one derivative in $x$ on $\p^{\bk-1}_{x} \p_v f $,
\[
D^2 \leq \int_\domain  \jap{v}^{q-2k - 1} |\p_x^{\bk -1}( \rmA \n_v f) | | \p^{\bk}_{x} \p_v f |  \dv\dx  \lesssim \rmd^{1/2} \rmv^{1/2} .
\]
For $D^3$ we absorb the pure $x$-partials into  the diffusion available in budget of \eqref{e:mDB},
\[
D^3 \leq \e \rmx + c \rmv.
\]
Finally, arguing similarly 
\[
D^4 \lesssim \rmd^{1/2} \rmv^{1/2} .
\]

We conclude so far,
\begin{equation}\label{e:mDB}
\dot{\rmm} = - (1 -\e) \rmx +  \rmd^{1/2} \rmv^{1/2} + \rmv + B.
\end{equation}
It remains to estimate $B$. Let us write 
\[
B_{k,q} = \int_\domain \jap{v}^{q-2k} \p_x^{\bk}\n_v \cdot ( \rmb f)    \p^{\bk-1}_{x} \p_v  f  \dv\dx + \int_\domain  \jap{v}^{q-2k}  \p^{\bk}_{x}   f  \p^{\bk-1}_{x} \p_v \n_v \cdot (\rmb f)  \dv\dx.
\]
At this point we observe that when all the derivatives fall on $f$, one obtains a full derivative in $v$,
\begin{equation*}\label{}
\begin{split}
& \int_\domain \jap{v}^{q-2k} \rmb \cdot  \n_v (\p_x^{\bk} f)    \p^{\bk-1}_{x} \p_v  f  \dv\dx + \int_\domain  \jap{v}^{q-2k}  \p^{\bk}_{x}   f \rmb \cdot \n_v (\p^{\bk-1}_{x} \p_v f)  \dv\dx \\
 = & \int_\domain  \jap{v}^{q-2k} \rmb \cdot \n_v ( \p^{\bk}_{x}   f \p^{\bk-1}_{x} \p_v f ) \dv\dx = -  \int_\domain  \n_v \cdot( \jap{v}^{q-2k} \rmb ) \p^{\bk}_{x}   f \p^{\bk-1}_{x} \p_v f  \dv\dx\\
  \lesssim & \int_\domain  \jap{v}^{q-2k} | \p^{\bk}_{x}   f \p^{\bk-1}_{x} \p_v f  | \dv\dx \leq \e \rmx + \rmv.
\end{split}
\end{equation*}

 Next batch consists of terms when at least one derivative in $v$ falls on $\rmb$. Then $\p_v \rmb $, which we generically denote $\rmc$, becomes a uniformly bounded smooth coefficient:
 
 \begin{equation*}\label{}
\int_\domain \jap{v}^{q-2k} \p_x^{\bk}( \rmc f)    \p^{\bk-1}_{x} \p_v  f  \dv\dx + \int_\domain  \jap{v}^{q-2k}  \p^{\bk}_{x}   f  \p^{\bk-1}_{x} \p_v ( \rmc f)  \dv\dx: = I + II
\end{equation*} 
Expanding $\p_x^{\bk}( \rmc f) $ we go down to zero-order terms and therefore the estimate this time involves the bounded  neutral term $\rmn$:
\begin{equation}\label{ }
I \lesssim   ( \rmx + \rmn)^{1/2} \rmv^{1/2} \lesssim \e \rmx + c+  \rmv.
\end{equation}
Next,
\[
II = \int_\domain  \jap{v}^{q-2k}  \p^{\bk}_{x}   f  \p^{\bk-1}_{x} (\p_v  \rmc f)  \dv\dx + \int_\domain  \jap{v}^{q-2k}  \p^{\bk}_{x}   f  \p^{\bk-1}_{x} ( \rmc  \p_v  f)  \dv\dx: = II_1 + II_2.
\]
We trivially have
\[
II_2 \lesssim \rmx^{1/2} \rmv^{1/2} \leq \e \rmx + \rmv.
\]
In the term $II_1$, however, we lost all derivatives in $v$ to be used on $f$. So, this term bears only $\rmx$ information, which on its face is large compared to dissipation. To avoid this we use the fact that the second integrand in this term carries derivatives up to $k-1$, which allows to use an interpolation inequality to extract a fraction of $\rmx$ only. First, we split
\begin{equation*}\label{}
\begin{split}
II_1 & = \int_\domain  \jap{v}^{q-2k}  \p^{\bk}_{x}   f  (\p^{\bk-1}_{x} \p_v  \rmc) f  \dv\dx + \sum_{0<\bk'\leq \bk-1} \int_\domain  \jap{v}^{q-2k}  \p^{\bk}_{x}   f  \p^{\bk-1 - \bk'}_{x} \p_v  \rmc  \p_x^{\bk'} f  \dv\dx \\
& \leq \rmx^{1/2} \rmn^{1/2} + \rmx^{1/2} \sum_{0<\bk'\leq \bk-1} \left( \int_\domain  \jap{v}^{q-2k}|\p_x^{\bk'} f|^2  \dv\dx \right)^{1/2}\\
\intertext{and using \eqref{e:interinxonly},}
& \leq \rmx^{1/2} \rmn^{1/2} + \rmx^{1/2} \left( \sum_{0<\bk'\leq \bk-1} \rmx^{\th_{\bk'}} \rmn^{1- \th_{\bk'}}  \right)^{1/2} \lesssim \e \rmx + C.
\end{split}
\end{equation*}
Thus, collecting the above
\[
I + II \leq \e \rmx + \rmv+ C.
\]

The next and final batch of terms consist of those that have all the $v$-derivatives on $f$, and since we already took care of all derivatives on $f$, this time at least one $x$-derivative must fall on $\rmb$:
\begin{equation*}\label{}
\begin{split}
III & = \sum_{0\leq \bk' < \bk} \int_\domain \jap{v}^{q-2k} \p_x^{\bk - \bk'} \rmb\,  \p_x^{\bk'} \p_v f \,   \p^{\bk-1}_{x} \p_v  f  \dv\dx +  \sum_{0\leq \bk' < \bk-1}  \int_\domain  \jap{v}^{q-2k}  \p^{\bk}_{x}   f \,  \p^{\bk-1 - \bk'}_{x} \rmb \, \p^{\bk'}_{x}\p_v^2 f  \dv\dx \\
& \leq \sum_{0\leq \bk' < \bk} \int_\domain \jap{v}^{q-2k+1} |\p_x^{\bk'} \p_v f | |   \p^{\bk-1}_{x} \p_v  f |  \dv\dx +  \sum_{0\leq \bk' < \bk-1}  \int_\domain  \jap{v}^{q-2k+1}  |\p^{\bk}_{x}   f || \p^{\bk'}_{x}\p_v^2 f  | \dv\dx \\
& \leq \rmv + \rmx^{1/2} \sum_{0\leq \bk' \leq \bk-2} \left( \int_\domain  \jap{v}^{q-2(k-2)-2}  | \p^{\bk'}_{x}\p_v^2 f  |^2 \dv\dx \right)^{1/2} \leq \rmv + \rmx^{1/2}\rmv^{1/2} \lesssim \e \rmx + \rmv.
\end{split}
\end{equation*}
All in all, we obtain
\[
B \leq  \e \rmx + \rmv + C.
\]
Thus,

\begin{equation}\label{e:mDB2}
\dot{\rmm} = - (1 -c \e) \rmx +  \rmd^{1/2} \rmv^{1/2} + \rmv + c
\end{equation}
as desired.

\end{proof}

\begin{lemma}\label{ }
We have
\begin{align}
\rmv & \lesssim \sum_{i=1}^N \rmx^{\th'_i} \rmd^{\th''_i}, \quad  0<\th'_i+\th''_i<1 \label{l:vxd}\\
\rmm & \lesssim \sum_{i=1}^N \rmh^{\th_i}, \quad 0< \th_i <1.  \label{l:mh}
\end{align}
\end{lemma}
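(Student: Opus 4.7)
Both bounds follow from iterated integration by parts in $v$ combined with Cauchy--Schwarz, with the strict sublinearity of exponents produced by the a priori boundedness of $\rmn = \rmh^{0,0}_q$ supplied by \lem{l:n}.

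For \eqref{l:vxd}, fix $(k,l)$ with $l\geq 1$ and $2k+l\leq 2m$. A single integration by parts in $v$, followed by Cauchy--Schwarz and absorption of the weight-boundary remainder via Young's inequality, yields
\[
\rmh^{k,l}_{q-2k-l} \lesssim \bigl(\rmh^{k,l-1}_{q-2k-(l-1)}\bigr)^{1/2}\bigl(\rmd^{k,l}_{q-2k-l}\bigr)^{1/2} + \text{l.o.t.},
\]
where the higher-derivative factor sits exactly at the canonical weight of $\rmd^{k,l}_{q-2k-l}$, while the lower-derivative factor is one unit below canonical for $(k,l-1)$ and thus bounded by the canonical piece of $\rmh$. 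Iterating $l$ times to eliminate the $v$-derivatives produces
\[
\rmh^{k,l}_{q-2k-l} \lesssim \bigl(\rmh^{k,0}_{q-2k}\bigr)^{1/(l+1)}\,\rmd^{\,l/(l+1)}.
\]
For $k=0$ the base equals $\rmn\lesssim 1$, giving $\rmd^{l/(l+1)}$ with exponent sum $l/(l+1)<1$. For $k\geq 1$ the base is a piece of $\rmx$, and the strict inequality $\th'+\th''<1$ is obtained via one further interpolation step that converts the cumulative sub-canonical weight discrepancy into a positive power of the bounded $\rmn$.

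For \eqref{l:mh}, Cauchy--Schwarz on each summand of $\rmm$ gives
\[
\rmm^k_{q-2k} \leq \bigl(\rmh^{k,0}_{q-2k}\bigr)^{1/2}\bigl(\rmh^{k-1,1}_{q-2k}\bigr)^{1/2},
\]
with the second factor dominated, via its sub-canonical weight, by the canonical piece $\rmh^{k-1,1}_{q-2k+1}$ of $\rmh$. Since both indices have strictly positive differential order $2k>0$ and $2(k-1)+1>0$, a Gagliardo--Nirenberg-type interpolation against the bounded base $\rmn = \rmh^{0,0}_q$ yields each factor $\lesssim \rmn^{\a_i}\rmh^{1-\a_i}$ with some $\a_i > 0$; multiplying and using $\rmn \lesssim 1$ gives $\rmm^k\lesssim\rmh^{1-(\a_1+\a_2)/2}$, proving the claimed bound.

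The principal technical obstacle is the bookkeeping of regressive weights: at each integration by parts the weight produced must be compared to the canonical weight of the receiving index, and the resulting sub-canonical discrepancy---combined with the $\rmn$-boundedness from \lem{l:n}---is precisely what supplies the strict slack $\th'_i+\th''_i<1$ and $\th_i<1$. This slack is essential, as these bounds will be inserted via Young's inequality into the Lyapunov estimate to absorb the $\rmv$- and $\rmm$-contributions into the dissipation $-c_3\rmd-\a c_3\rmx$.
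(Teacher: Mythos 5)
Your overall strategy is in the right spirit, and for \eqref{l:vxd} it is genuinely different from the paper: you work with iterated integration by parts in $v$ and discrete log-convexity, whereas the paper applies the weighted Fourier interpolation inequality \eqref{e:intergen} (proved in the Appendix via Littlewood--Paley in $v$ and distribution of the weight across the dyadic shells) with $K = k + l/2$, $L = l+1$, and reads off the slack directly from the $\th_i'''>0$ power of the bounded neutral term. However, there are concrete gaps at the places you gloss over.

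For \eqref{l:vxd}: one integration by parts gives, with $b_j := \rmh^{k,j}_{q-2k-l}$ (weight $\jap{v}^{q-2k-l}$ fixed), the inequality $b_j \leq b_{j-1}^{1/2}b_{j+1}^{1/2} + C\, b_{j-1}^{1/2}b_j^{1/2}$, the second term coming from $\n_v\jap{v}^{q-2k-l}$. Young's inequality on that correction leaves a nontrivial additive $+\,b_{j-1}$ on the right, so the chain is not exactly log-convex and the clean conclusion $b_l \lesssim b_0^{1/(l+1)}b_{l+1}^{l/(l+1)}$ does not follow immediately; you need either a telescoping/maximum argument over $j$ or a direct $l$-fold integration by parts with careful index bookkeeping (which then produces $v$-derivatives of order up to $2l$, some of which may exceed $2m-2k$). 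And the ``one further interpolation step'' converting $b_0 = \rmh^{k,0}_{q-2k-l}$ into $\rmx^{\a}\rmn^{1-\a}$ \emph{is} precisely the weighted Gagliardo--Nirenberg of the Appendix (\eqref{e:interinxonly} with $K=k+l/2$), and for odd $l$ that exponent is fractional, so the paper has to insert an additional $x$-interpolation between $D_x^{k+p}$ and $D_x^{k+p+1}$ with re-distributed weights. None of this is hand-waved away; your proposal does not flag either the additive $l.o.t.$ in the discrete convexity step, the odd-$l$ case, nor the fact that the fractional-order factor must be converted back to pieces of $\rmx$ without overshooting the canonical weight.

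For \eqref{l:mh}: the assertion that each Cauchy--Schwarz factor is $\lesssim \rmn^{\a_i}\rmh^{1-\a_i}$ with $\a_i>0$ is not correct for the first factor. $\rmh^{k,0}_{q-2k}$ already carries its canonical weight, and a Gagliardo--Nirenberg in $x$ against $\rmn=\rmh^{0,0}_q$ at a fixed weight (as in \eqref{e:interinxonly}) pushes the high-derivative factor to $\int|\p_x^K f|^2\jap{v}^{q-2k}$, which is \emph{super}-canonical for $(K,0)$ when $K>k$ and hence is not dominated by any piece of $\rmh$. In fact the first factor is simply $\leq \rmh$; there is no slack to extract from it. The slack must come entirely from the second factor $\rmh^{k-1,1}_{q-2k}$, which is one unit sub-canonical in weight, and the paper exploits exactly this by applying \eqref{e:intergen} with $K=k$, $L=2k$ and noting $\frac{k-1}{k}+\frac{1}{2k}=1-\frac{1}{2k}<1$ so that $\th_i'''>0$. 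Your ``positive differential order'' criterion ($2k>0$, $2(k-1)+1>0$) is not the reason a positive power of $\rmn$ appears; the reason is this strict inequality in the $(\frac{k}{K},\frac{l}{L})$-balance.
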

\begin{proof}

Let us first verify \eqref{l:vxd}. For any $2k+l \leq 2m$, $l \geq 1$, we apply \eqref{e:intergen} with $K = k+ \frac{l}{2}$ and $L = l+1$, and using that the neutral term is bounded,
\[
\rmh^{k,l}_{q - 2k - l}\lesssim \sum_{i=1}^N  \left(  \int_{\domain}  |  D^{k+\frac{l}{2}}_{x}  f |^2 \jap{v}^{q-2k-l} \dv\dx \right)^{\th_i'} \left(   \int_{\domain} \sum_{l' \leq L} |  \p^{l+1}_{v}  f |^2  \jap{v}^{q-2k-l} \dv\dx \right)^{\th_i''},
\]
with $\th_i' + \th_i''<1$. So, if $l$ is even, then the $x$-integral on the right hand side consists of integer order derivatives and therefore,
\begin{equation}\label{e:hxd}
\rmh^{k,l}_{q - 2k - l}\lesssim \sum_{i=1}^N \rmx^{\th_i'} \rmd^{\th_i''}.
\end{equation}
If $l$ is odd, $l = 2p + 1$, and hence $k+p+1 \leq m$, we use interpolation in $x$ and distribute the weight accordingly,
\begin{equation*}\label{}
\begin{split}
 \int_{\domain} &  |  D^{k+p + \frac12}_{x}  f |^2 \jap{v}^{q-2k-l} \dv\dx  \leq \int_{\R^n} \left( \int_\T^n   |  \p_x^{k+p + 1} f |^2 \dx \right)^{1/2} \left( \int_\T^n   |  \p_x^{k+p }  f |^2 \dx \right)^{1/2}  \dv\\
  = & \int_{\R^n}  \left( \int_\T^n   |  \p_x^{k+p + 1} f |^2  \jap{v}^{q-2k-2p-2}\dx \right)^{1/2} \left( \int_\T^n   |  \p_x^{k+p }  f |^2 \jap{v}^{q-2k-2p}\dx \right)^{1/2}  \dv\\
 \leq &\left( \int_\domain   |  \p_x^{k+p + 1} f |^2  \jap{v}^{q-2k-2p-2} \dv \dx \right)^{1/2} \left( \int_\domain   |  \p_x^{k+p }  f |^2 \jap{v}^{q-2k-2p} \dv \dx \right)^{1/2}  
\end{split}
\end{equation*}
So, if $k=p=0$, then the above is bounded by $\lesssim \rmx^{1/2}$, 
and if $k+p >0$, then by $\lesssim \rmx$. In either case we arrive at the same bound \eqref{e:hxd}. 

Summing up over $2k+l\leq 2m$, $l\geq 1$, and reindexing the exponents, we obtain
\[
\rmv \lesssim   \sum_{i=1}^N   \rmx^{\th'_i} \rmd^{\th''_i},
\]
with $N$ depending only on the indexes.

As to the mixed term, we have 
\begin{equation*}\label{}
\begin{split}
 \int_\domain  \jap{v}^{q-2k}  \p^{\bk}_{x} f  \cdot  \p^{\bk-1}_{x} \p_v f  \dv\dx & \leq \left( \int_\domain   |  \p_x^{k} f |^2  \jap{v}^{q-2k} \dv \dx \right)^{1/2} \left( \int_\domain   |  \p_x^{k-1} \p_v  f |^2 \jap{v}^{q-2k} \dv \dx \right)^{1/2}  \\
 \intertext{ using \eqref{e:intergen} with $K = k$, $L = 2k$,}
 & \leq \rmh^{1/2} \sum_i ( \rmh_{q-2k}^{k,0} )^{\th_i' /2} ( \rmh_{q-2k}^{0,2k} )^{\th_i''/2} \leq \sum_i \rmh^{\th_i} ,
 \end{split}
\end{equation*}
where all $\th_i<1$. After adding up the terms and relabeling the exponents, we obtain
\[
\rmm \lesssim \sum_i \rmh^{\th_i}.
\]

\end{proof}

It remains to collect the obtained inequalities and prove the generation of bound lemma.
\begin{lemma}\label{l:generation}
Suppose time-dependent quantities $\rmh,\rmd,\rmx,\rmv, \rmn \geq 0$ and $\rmm$ on time interval $(0,T)$ satisfy the following set of inequalities
\begin{align}
\dot{\rmh} & \leq -c_1 \rmd + c_2 \rmh\\
\dot{\rmm} & \leq - c_3 \rmx + c_4 \rmv + c_5 \rmv^{1/2} \rmd^{1/2} + c_6\\
\rmh & = \rmx + \rmv + \rmn \\
\rmn & \leq \bar{\rmn}\\
\rmv & \leq  c_7 \sum_{i=1}^N \rmx^{\th'_i} \rmd^{\th''_i}, \quad  0<\th'_i+\th''_i<1 \label{e:vxd}\\
| \rmm |  & \leq c_8 \sum_{i=1}^N \rmh^{\th_i}, \quad 0< \th_i <1.  \label{e:mh}
\end{align}
Then
\begin{equation}\label{e:hreg}
\rmh \leq \frac{C}{t^{\k}},
\end{equation}
for all $t\in (0,T)$, and for some $\k,C>0$ depending on all the constants above.
\end{lemma}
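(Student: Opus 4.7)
The plan is to build a Lyapunov functional $\cL = \rmh + \e \rmm$ for a suitable $\e > 0$ and close a differential inequality, then leverage the superlinear dissipation implicit in the condition $\th'_i + \th''_i < 1$ to obtain a time-regularising bound independent of $\cL(0)$. The equivalence $\cL \sim \rmh$ will follow from \eqref{l:mh}: since each $\th_i<1$, Young's inequality gives $|\rmm| \leq \mu \rmh + C_\mu$ for arbitrary $\mu > 0$, so the choice $\e \mu \leq 1/4$ yields $\tfrac{1}{2} \rmh - \e C_\mu \leq \cL \leq 2 \rmh + \e C_\mu$.

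Combining the two given differential inequalities one obtains
\[
\dot \cL \leq - c_1 \rmd + c_2 \rmh - \e c_3 \rmx + \e c_4 \rmv + \e c_5 \rmv^{1/2}\rmd^{1/2} + \e c_6.
\]
Half of the $\rmd$-dissipation absorbs the cross term via $\e c_5 \rmv^{1/2} \rmd^{1/2} \leq \tfrac{c_1}{2} \rmd + \tfrac{\e^2 c_5^2}{2 c_1} \rmv$, and the bound $\rmv \leq \delta(\rmx + \rmd) + C_\delta$ from \eqref{l:vxd} and Young (valid since $\th'_i + \th''_i < 1$) handles the remaining $\rmv$ terms. Decomposing $c_2 \rmh = c_2(\rmx + \rmv + \rmn)$ with $\rmn \leq \bar\rmn$, fixing $\e \geq 2 c_2/c_3$ and $\delta$ sufficiently small, all positive contributions are absorbed and one arrives at $\dot \cL \leq -\alpha (\rmd + \rmx) + K$ for explicit positive constants. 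The bound $\rmh \leq (1+\delta)\rmx + \delta\rmd + C$ then promotes this to $\dot \cL \leq -\alpha' \cL + K'$ via the equivalence above, a clean Gr\"onwall-type inequality whose naive consequence $\cL(t) \leq \cL(0) e^{-\alpha' t} + K'/\alpha'$ however still depends on the initial value.

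To remove this dependence on $\cL(0)$ and obtain $\rmh(t) \leq C/t^\k$, I would exploit the superlinear dissipation hidden in the assumptions. Whenever $\rmh \geq 4 \bar\rmn$ one of two cases occurs: either $\rmx \geq \rmh/2$, which gives a linear dissipation of size $\rmh$ from the $-\e c_3 \rmx$ term, or $\rmv \geq \rmh/4$, in which case inverting \eqref{l:vxd} with $\rmx \leq \rmh$ produces $\rmd \geq c \rmh^{p}$ with exponent $p = \min_i (1-\th'_i)/\th''_i > 1$, i.e.\ a strictly superlinear dissipation. Tracking this dichotomy along trajectories and combining with the linear inequality of the previous step yields a closed Bernoulli-type inequality of the form $\dot \cL \leq - c^\ast \cL^{1+\sigma} + K^\ast$ outside a bounded sublevel set $\{\cL \leq R^\ast\}$. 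Integration gives $\cL(t) \leq \max(R^\ast, C/t^{1/\sigma})$ independently of $\cL(0)$, which via the equivalence translates to $\rmh(t) \leq C/t^\k$ with $\k = 1/\sigma$.

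The main obstacle is this last step. The superlinear dissipation is visible only on the ``$\rmv$-dominant'' branch, while the ``$\rmx$-dominant'' branch delivers only linear coercivity, and a purely linear ODE inequality cannot forget the initial datum. The crux is therefore to show that the trajectory cannot persist in the $\rmx$-dominant regime while $\rmh$ is large: the linear dissipation there forces $\rmh$ to shrink, and once $\rmv$ inevitably rises back above $\rmh/4$ the superlinear mechanism takes over. Packaging this interplay into a single Bernoulli-type differential inequality for $\cL$, or alternatively running a direct trajectory-wise comparison, is the delicate technical step.
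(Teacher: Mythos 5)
Your preliminary reduction is fine: the equivalence $\cL\sim\rmh$ via Young on \eqref{e:mh}, the cross-term and $\rmv$-absorptions, and the extraction of the exponent $p=\min_i(1-\th_i')/\th_i''>1$ in the $\rmv$-dominant case are all correct. But the gap you flag in your final paragraph is genuine, and the Bernoulli-type inequality $\dot\cL\leq -c^*\cL^{1+\sigma}+K^*$ outside a bounded sublevel set simply does not hold pointwise in $t$. In the $\rmx$-dominant regime (with $\rmv,\rmd$ small) the only coercivity in $\dot\cL$ comes from $-\e c_3\rmx\leq -\tfrac{\e c_3}{2}\rmh$, which is \emph{linear}, and a linear differential inequality $\dot u\leq -cu+K$ cannot regularize an infinite initial datum: $u(0)=+\infty$ forces $u(t)=+\infty$ for every finite $t$. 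No trajectory-wise tracking of the dichotomy upgrades a pointwise-linear estimate to a pointwise-superlinear one, because nothing in the pointwise inequalities prevents the trajectory from lingering $\rmx$-dominant while $\rmh$ merely decays exponentially from $+\infty$.

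The mechanism that forbids such persistence is time-integrated, not pointwise, and this is where the paper's proof departs from yours. Fix a dyadic level $E$ (assume, as you may, $E>1$ and $E>4\bar\rmn$) and let $I=(t_1,t_2)$ be any interval on which $E/2<\rmh<E$. Integrating $\dot\rmm\leq -c_3\rmx+\cdots$ over $I$ converts the dangerous $\rmx$-term into boundary data: $c_3\int_I\rmx\leq|\rmm(t_1)|+|\rmm(t_2)|+\int_I\bigl(c_4\rmv+c_5\rmv^{1/2}\rmd^{1/2}+c_6\bigr)$. By \eqref{e:mh} the boundary values are $\lesssim\sum_iE^{\th_i}$, \emph{sublinear} in $E$ since $\th_i<1$. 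Meanwhile $\int_I\rmd\lesssim E$ from the $\rmh$-equation (after removing the $e^{c_2t}$ factor), $\rmx\leq E$ on $I$, and H\"older applied via \eqref{e:vxd} gives $\int_I\rmv\lesssim\sum_i|I|^{1-\th_i'-\th_i''}E^{\th_i'+\th_i''}$. Feeding all of this back and using $\rmx+\rmv\geq E/4$ yields $|I|E\lesssim\sum_i|I|^{\d_i'}E^{\d_i''}$ with all exponents strictly less than $1$, whence $|I|\lesssim E^{-\k}$. Summing over dyadic bands then gives $\rmh(t)\leq C/t^\k$ with no dependence on the initial value of $\rmh$; this is the Villani-style time-in-band argument (cf.~Lemma~A.20 of his hypocoercivity memoir). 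The one idea your draft is missing is that \eqref{e:mh} must be deployed in integral form against $\int_I\rmx$ on a band, rather than folded into a single pointwise Lyapunov inequality.
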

\begin{proof}
We argue as in \cite[Lemma A.20]{Villani-hypo}: in order to prove \eqref{e:hreg} it is sufficient to show that for any $E>0$ the length of the time interval $I \ss (0,T)$ on which $E/2 < \rmh <E$ is estimated by $|I| \lesssim 1/ E^{\k}$, $\k >0$.   This will be done by showing the bound
\[
|I| E \lesssim \sum_{i=1}^M  |I|^{\d'_i} E^{\d''_i},
\]
for some $0 \leq \d'_i, \d''_i < 1$.  Then picking the index $i$ for which the summand is maximal we achieve
\[
|I| \lesssim \frac{1}{E^{(1-\d''_i) / (1-\d'_i)}}.
\]

We assume without loss of generality that $E$ is larger than some fixed constant, namely, $E > 1$ and $E> 4 \bar{n}$,  for otherwise the bound $|I| \lesssim 1/ E^{\k}$ is trivial.  Also, by rescaling all quantities by $e^{-c_2 t} \rmh$ we can assume that 
\[
\dot{\rmh}  \leq -c_1 \rmd .
\]
Denoting $I = (t_1,t_2)$ and integrating of the above over $I$ we have
\[
\int_I \rmd(t) \dt \lesssim \rmh(t_1)  \leq  E.
\]
Noting that $\rmx \leq E$, we have
\[
\int_I \rmv \dt \lesssim  \sum_{i=1}^N \int_I  \rmx^{\th'_i} \rmd^{\th''_i} \dt \leq \sum_{i=1}^N |I|^{1-\th'_i - \th''_i} \left(\int_I \rmx \dt \right)^{\th'_i} \left(\int_I \rmd \dt \right)^{\th''_i}\leq \sum_{i=1}^N |I|^{1-\th'_i - \th''_i} E^{\th'_i + \th''_i} = \sum_{i=1}^N |I|^{1-\th'''_i} E^{\th'''_i}, 
\]
where $0<\th'''_i = \th_i'+ \th_i'' <1$. 

Next, from the $\rmm$-equation, and \eqref{e:mh},
\begin{equation*}\label{}
\begin{split}
\int_I \rmx \dt & \lesssim |\rmm(t_1)| + |\rmm(t_2)|+\int_I \rmv \dt + \int_I \rmv^{1/2} \rmd^{1/2} \dt + \int_I 1 \dt \\
& \lesssim \sum_i (E^{\th_i}  +|I|^{1-\th'''_i} E^{\th'''_i}) +(\sum_{i=1}^N |I|^{1-\th'''_i} E^{\th'''_i})^{1/2} E^{1/2} + 1.
\end{split}
\end{equation*}
Note that $1$ can be thought of as $|I|^0 E^0$, so it fits within the allowed range of powers. Reindexing, we obtain
\[
\int_I \rmx \dt \lesssim \sum_i |I|^{\d_i'} E^{\d_i''}.
\]

Since $E> 4 \bar{n}$, we have $\rmx + \rmv \geq E/4$, and hence, from the above,
\[
|I| E \lesssim \int_I (\rmx + \rmv ) \dt \lesssim  \sum_i |I|^{\d_i'} E^{\d_i''},
\]
which finishes the proof.
\end{proof}

This finishes the proof of the required bound on $H^m_q$. It remains to observe that directly from the equation \eqref{e:FPgen},
\[
\|\p_t f(t)\|_{H^{m-1}_{q-3}} \leq C \|f(t)\|_{H^m_q}.
\]
This finishes the proof.

\end{proof}

\section{Further comments}

The existence and uniqueness of weak solutions can be extended to include the open space $\O = \R^n$, as it was done already in \cite{KMT2013} for the Cucker-Smale model. Solutions are assumed to have at least 2nd finite moments in both $v$ and $x$, $f\in L^1(1+ |x|^2 + |v|^2) \cap L^\infty$.  The renormalization follows in a similar fashion since the distributional formulation is understood relative to compactly supported test-functions. In particular, on any ball $B_R$ we can still use \lem{e:KMT} even though $C = C(R)$. We therefore see the whole content of \thm{t:weak} recoverable in these settings. The gain of Gaussian tails in \eqref{e:Gauss}, however, requires compactness of $\O$ at least at the stage of showing the spread in $x$ if we want to control the coefficients only using the initial entropy like in \prop{p:Gauss}. It is conceivable to complete the argument with Gaussian decay in both $v$ and $x$-direction.  

The main issue related to the open space is the lack of thickness of the flock, due to the fundamental restriction coming from finiteness of the mass $\int_{\R^n} \rho \dx = 1$. Even if the spread of positivity is established we inevitably confront the problem of decay at $x$-infinity. In particular, the content of \lem{l:spgap} becomes problematic as well as the relaxation for large data result of \prop{p:relax}.  It is conceivable that implementing, say, a quadratic confinement force $- \n_x U$, $U \sim |x|^2$ in the model, can help metigate the lack of compactness and stabilize long time behavior.  The global hypoellipticity of \prop{p:reg} would likely require use of Sobolev spaces with weights in $x$ as well.

Back on the torus $\O = \T^n$, with regard to other non-symmetric protocols such as $\cC_\g$, or symmetric but not type $(2,\infty)$ protocols, it is possible to establish relaxation and regularization results for perturbative data. Such results have already been discussed in \cite{S-EA}.  
 If the initial Fisher information $\cI(f_0)$ is small then by the \CK, the density is close to uniform,
 \begin{equation}\label{e:rhonearunif}
\| \rho - 1/|\O| \|_1 \leq \d_0,
\end{equation}
which is turn implies uniform thickness by \ref{i:th3}. So, at least on a short interval of time this will remain to be the case, and thus all the coefficients are in $C^\infty$ and $\s>c_0$, which brings us into the framework of global hypoellipticity. With the conclusions of \prop{p:reg}, we can justify the hypocoercivity analysis. It is shown in \cite[Propositions 4.16, 4.18]{S-EA} that the spectral gap for all core models will remain uniformly bounded from zero for data close to uniform \eqref{e:rhonearunif}. We then obtain an estimate of type \eqref{e:HILyap} which shows that $\cI(f)$ remains small on that same interval of time.  The argument then closes to show that the solution will never leave a neighborhood of the Maxwellian at least in the Fisher metric, and thus will exponentially relax to the momentum-centered Maxwellian $\mu_{\s_0,\bar{u}}$, where $\bar{u}$ depends on time, see \cite[Proposition 8.5]{S-EA} for a formal stability argument. The only remaining issue to be addressed is stabilization of the momentum $\bar{u}$ itself. As we demonstrated in \cite[Lemma 8.6]{S-EA}, $\bar{u}$ indeed stabilizes to some $u_\infty\in \R^n$ exponentially fast provided the protocol $\cC$ is conservative relative to the uniform distribution $\bar{\rho} = 1/ |\O|$:
\begin{equation}\label{e:consunif}
\st^*_{\bar{\rho}} = \st_{\bar{\rho}}.
\end{equation}
It is easy to verify that all our core models fulfill this condition. So, we obtain the following small data result. 

\begin{theorem}\label{t:mainrelaxfinal}
For all the core models \ref{Cg}, \ref{Cg+}, \ref{Cg/2}, \ref{Segg}, for any $0\leq \g\leq 1$, there exists a  $\d>0$ depending only on the parameters of the protocol $\cC$ such that all the conclusions of \thm{t:mainrelaxCS} hold for initial data satisfying
\begin{equation}\label{ }
\cI(f_0) \leq  \s_0 \d,
\end{equation}
and with $\bar{u} \to u_\infty$ exponentially fast.
\end{theorem}

Note that for non-conservative protocols $u_\infty$ is an emerging vector which may not coincide with the initial momentum.

\section{Appendix: interpolation in weighted Sobolev spaces}
For any any $p\in \N$ we denote for short $ \p_x^p f = ( \p_{x_1}^p f, \ldots, \p_{x_n}^p f)$, and $|\p_x^p f |^2 = \sum_i |\p_{x_i}^p f |^2$. Similar notation will be used in $v$-variable. It is clear that $|\p_x^p f |^2$ represents a linear combination of $p$th order derivatives.

 Let us first note the following simple interpolation inequality:  for $k,l \geq 0$ and $K,L >0$ such that $\frac{k}{K} + \frac{l}{L} <1$ and any $|\bk| = k$, $|\bl | = l$, we have
\begin{equation}\label{e:interpol}
\int_\domain |\p_x^{\bk} \p_v^{\bl} f|^2 \dv \dx \leq  \left(  \int_{\domain}  |  \p^{K}_{x}  f |^2 \dv\dx \right)^{\frac{k}{K}} \left(   \int_{\domain} |  \p^{L}_{v}  f |^2 \dv\dx \right)^{\frac{l}{L}} \left(  \int_{\domain}  |  f |^2 \dv\dx \right)^{1 -\frac{k}{K} - \frac{l}{L}}.
\end{equation}
Indeed, denoting by $\dxi$ the counting measure over $\Z^n$,
\begin{equation*}\label{}
\begin{split}
\int_\domain |\p_x^{\bk} \p_v^{\bl} f|^2 \dv \dx & = \int_{\Z^n \times\R^n} \Pi_{i=1}^n |\xi_i|^{2k_i} |\eta_i|^{2l_i} |\hat{f}|^2 \deta \dxi \leq \int_{\Z^n \times\R^n}  |\xi|^{2k} |\eta|^{2l} |\hat{f}|^2 \deta \dxi \\
& \leq \int_{\Z^n \times\R^n}  |\xi|^{2k} |\hat{f}|^{2k/K} |\eta|^{2l}|\hat{f}|^{2l/L} |\hat{f}|^{2(1 - k/K - l/K)}  \deta \dxi \\
& \leq  \left(  \int_{\Z^n \times\R^n} |\xi|^{2K} |\hat{f}|^2  \deta \dxi  \right)^{\frac{k}{K}} \left(   \int_{\Z^n \times\R^n}  |\eta|^{2L}|\hat{f}|^{2}\deta \dxi  \right)^{\frac{l}{L}} \left(  \int_{\Z^n \times\R^n}  |  f |^2 \deta \dxi \right)^{1 -\frac{k}{K} - \frac{l}{L}}\\
& \lesssim \left( \sum_i  \int_{\Z^n \times\R^n}  |\xi_i|^{2K} |\hat{f}|^2  \deta \dxi  \right)^{\frac{k}{K}} \left(  \sum_i  \int_{\Z^n \times\R^n}  |\eta_i|^{2L}|\hat{f}|^{2}\deta \dxi  \right)^{\frac{l}{L}} \\
 & \hspace{3in}\times \left(  \int_{\Z^n \times\R^n}  |  f |^2 \deta \dxi \right)^{1 -\frac{k}{K} - \frac{l}{L}}\\
& =   \left(  \int_{\domain}  |  \p^{K}_{x}  f |^2 \dv\dx \right)^{\frac{k}{K}} \left(   \int_{\domain} |  \p^{L}_{v}  f |^2 \dv\dx \right)^{\frac{l}{L}} \left(  \int_{\domain}  |  f |^2 \dv\dx \right)^{1 -\frac{k}{K} - \frac{l}{L}}.
\end{split}
\end{equation*}

When a weight is involved that depends only on $v$, $\w=\w(v)$, then the above interpolation extends trivially in the case when $\bk = 0$ by application of \eqref{e:interpol} in $x$-variable only, and the \HI \ in $v$:
\begin{multline}\label{e:interinxonly}
\int_\domain  |\p_x^{\bk} f|^2  \w \dv \dx \leq \int_{\R^n} \left(  \int_{\T^n}  |  \p^{K}_{x}  f |^2 \dx \right)^{\frac{k}{K}}  \left(  \int_{\T^n}  |  f |^2 \dx \right)^{1 - \frac{k}{K}} \w \dv \\ 
\leq \left(  \int_{\domain}  |  \p^{K}_{x}  f |^2 \w \dv \dx \right)^{\frac{k}{K}}  \left(  \int_{\domain}  |  f |^2 \w \dv \dx \right)^{1 - \frac{k}{K}} .
\end{multline}

It is clear that the above inequalities extend to any fractional $k,K$ as well if we replace the derivatives with Fourier multipliers $D_x^k f$  with symbol $ |\xi|^k$. In fact the computations above already use that symbol even for integer order derivatives. It is important to keep in mind though that if $k$ is integer, then since $\w$ is not involved in Fourier transform, we have
\[
 \int_{\domain}  |  D_x^{k}  f |^2 \w \dv \dx  \sim  \int_{\domain}  | \p_x^{k}  f |^2 \w \dv \dx. 
\]
In other words we can go back to the classical derivatives. 

\begin{lemma}\label{ }
Suppose $\w$ is a doubling weight,
\begin{equation}\label{ }
\begin{split}
c \leq \frac{\w(v')}{\w(v'')} \leq C, & \quad \frac12 \leq \frac{|v'|}{|v''|} \leq 2\\
\w(v) \sim 1, & \quad |v| \leq 1.
\end{split}
\end{equation}
Let $k,l \geq 0$ and $K,L >0$ such that $\frac{k}{K} + \frac{l}{L} <1$,  and let $|\bk| = k$, $|\bl | = l$. Then there exists constants $N \in\N$, $0\leq \th_i',\th_i'',\th_i''' \leq 1$ with $\th_i'+\th_i''+\th_i''' = 1$ and $\th_i'''>0$, that depend only on $k,K,l,L$,  such that 
\begin{multline}\label{e:intergen}
\int_\domain  |\p_x^{\bk} \p_v^{\bl} f|^2  \w \dv \dx \\ \leq \sum_{i=1}^N  \left(  \int_{\domain}  |  \p^{K}_{x}  f |^2 \w \dv\dx \right)^{\th_i'} \left(   \int_{\domain} \sum_{l' \leq L} |  \p^{l'}_{v}  f |^2 \w \dv\dx \right)^{\th_i''} \left(  \int_{\domain}  |  f |^2 \w \dv\dx \right)^{\th_i'''}.
\end{multline}
Here, $K$ can be fractional with $\p_x^K$ being interpreted as above.
\end{lemma}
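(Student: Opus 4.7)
The plan is to reduce the weighted estimate \eqref{e:intergen} to the unweighted inequality \eqref{e:interpol} by a Littlewood--Paley-style dyadic decomposition in $v$ that exploits the doubling of $\w$. Fix a smooth partition of unity $\{\psi_j\}_{j\geq 0}$ on $\R^n$ with $\psi_j\geq 0$, $\sum_j\psi_j^2\equiv 1$, $\supp \psi_0\subset\{|v|\leq 2\}$, $\supp\psi_j\subset\{2^{j-1}\leq|v|\leq 2^{j+1}\}$ for $j\geq 1$, and $|\p_v^p\psi_j|\leq C_p 2^{-jp}$. The doubling hypothesis produces constants $\w_j>0$ (with $\w_0\sim 1$) such that $\w\sim\w_j$ uniformly on $\supp\psi_j$, so the weight is essentially frozen on each dyadic scale.

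On each piece I apply the unweighted interpolation \eqref{e:interpol} to $g_j:=\psi_j f$. The Leibniz identities
\begin{equation*}
\p_x^{\bk}\p_v^{\bl}(\psi_j f) = \psi_j\,\p_x^{\bk}\p_v^{\bl}f + \sum_{|\bl''|<l}c_{\bl''}\,\p_v^{\bl-\bl''}\psi_j\,\p_x^{\bk}\p_v^{\bl''}f, \qquad \p_v^{L}(\psi_j f)=\sum_{l'\leq L}c_{l'}\,\p_v^{L-l'}\psi_j\,\p_v^{l'}f,
\end{equation*}
combined with the bounds $|\p_v^{p}\psi_j|\lesssim 2^{-jp}$ and the subadditivity $(\sum_{l'} A_{l'})^{l/L}\lesssim \sum_{l'} A_{l'}^{l/L}$, yield after multiplication by $\w_j\sim\w$ an annular estimate
\begin{equation*}
\int\psi_j^2\,|\p_x^{\bk}\p_v^{\bl}f|^2\w\dv\dx \leq C\sum_{l''<l}2^{-2j(l-l'')}\int\tilde{\psi}_j^2\,|\p_x^{\bk}\p_v^{l''}f|^2\w\dv\dx + C\sum_{l'\leq L}a_j^{k/K}\,b_{j,l'}^{l/L}\,c_j^{1-k/K-l/L},
\end{equation*}
with $a_j:=\int\psi_j^2|\p_x^K f|^2\w\dv\dx$, $b_{j,l'}:=\int\tilde{\psi}_j^2|\p_v^{l'}f|^2\w\dv\dx$, $c_j:=\int\psi_j^2|f|^2\w\dv\dx$, and $\tilde{\psi}_j$ a slightly enlarged cut-off. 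The fractional case $K\notin\N$ is absorbed by invoking \eqref{e:interinxonly} fiberwise in $v$, which is legitimate because $\w$ does not depend on $x$.

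Summation over $j\geq 0$ then proceeds via the discrete H\"older inequality $\sum_j a_j^{\alpha}b_j^{\beta}c_j^{\gamma}\leq(\sum_j a_j)^{\alpha}(\sum_j b_j)^{\beta}(\sum_j c_j)^{\gamma}$ for $\alpha+\beta+\gamma=1$, collapsing the second sum into full weighted integrals on $\domain$ and producing $L+1$ terms of the shape required by \eqref{e:intergen} with exponents $(\th_i',\th_i'',\th_i''')=(k/K,\,l/L,\,1-k/K-l/L)$, all strictly positive thanks to the hypothesis $k/K+l/L<1$. The first sum, containing $\p_x^{\bk}\p_v^{l''}f$ with $l''<l$, is absorbed by induction on $l$: the base case $l=0$ reduces directly to \eqref{e:interinxonly}, while for $l\geq 1$ the inductive hypothesis (applicable since $k/K+l''/L<k/K+l/L<1$) rewrites every lower-order mixed derivative as a further sum of the desired type.

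The chief technical obstacle lies in the Leibniz bookkeeping: one must verify that the factors $2^{-2j(L-l')l/L}$ arising from derivatives of $\psi_j$ in the second sum are uniformly bounded by $1$ (they are, since $l/L\leq 1$), so that the discrete H\"older step proceeds without loss, and that the cross-term factors $2^{-2j(l-l'')}$ provide summability in $j$ so the induction closes with a finite constant. The strict inequality $k/K+l/L<1$ is essential to secure $\th_i'''>0$ in every term, precluding endpoint degeneracies that would otherwise demand a real-interpolation $J$-functional argument outside the scope of the hypothesis.
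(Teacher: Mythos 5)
Your proof is correct and follows essentially the same route as the paper: a dyadic Littlewood--Paley decomposition in $v$, freezing the doubling weight on each annulus so the unweighted interpolation \eqref{e:interpol} applies piecewise, reassembly via discrete H\"older in the dyadic index with exponents summing to one, and induction on $l$ to absorb the lower-order Leibniz cross-terms. The paper organizes the Leibniz bookkeeping in a slightly different order (peeling $\psi_p\,\p_v^{\bl}f = \p_v^{\bl}(\psi_p f) - \text{lower order}$ before applying \eqref{e:interpol} rather than applying it directly to $\psi_p f$), but this is cosmetic and the argument is the same.
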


\begin{proof}
We argue by  induction on $\mu$.  For the base case when $\mu = 0$, the inequality has been proved above.

Let us now assume that  $\mu \geq 1$ and the inequality holds for the indices up to $\mu-1$. We will use the Littlewood-Paley decomposition of the kinetic $v$-space. Fix $\phi \in C^\infty$, $\phi(r) = 1$ for $r<1/2$, and $\phi(r) = 0$ for $r>1$. Define, $\psi^2(r) = \phi(r/2) - \phi(r)$, and $\psi^2_p = \psi^2(r/2^p)$, $\psi^2_{-1} = \phi$. Then $\sum_{p=-1}^\infty \psi^2_p = 1$. We have
\begin{equation*}\label{}
\begin{split}
\int_\domain  |\p_x^{\bk} \p_v^\bl f|^2  \w \dv \dx&  = \sum_{p=-1}^\infty \int_\domain   |\p_x^{\bk} (  \psi_p \p_v^\bl f) |^2  \w \dv \dx \\
& \leq  \sum_{p=-1}^\infty \int_\domain   |\p_x^{\bk}  \p_v^\bl (  \psi_p f) |^2  \w \dv \dx + \sum_{\bl' < \bl} \sum_{p=-1}^\infty \int_\domain   |\p_v^{\bl - \bl'} \psi_p|^2  |\p_x^{\bk}  \p_v^{\bl'}  f |^2  \w \dv \dx
\end{split}
\end{equation*}
Note that $ |\p_v^{\mu - m} \psi_p|^2$ is a geometrically decreasing sequence of  functions supported on the dyadic shell $\{2^{p-1}\leq |v|\leq 2^{p+1}\}$. The shells overlap at most by two, so the last sum adds up to 
\[
\lesssim  \sum_{\bl' < \bl}   \int_{\domain}  |\p_x^{\bk}  \p_v^{\bl'}  f |^2  \w \dv \dx.
\]
By the induction hypothesis all these terms are bounded as desired. So, let us focus on the main term. Using \eqref{e:interpol}, and the doubling property of the weight,
\begin{equation*}\label{}
\begin{split}
& \sum_{p=-1}^\infty \int_\domain   |\p_x^{\bk}  \p_v^\bl (  \psi_p f) |^2  \w \dv \dx  \sim \sum_{p=-1}^\infty  \w(2^p) \int_\domain   |\p_x^{\bk}  \p_v^\bl (  \psi_p f) |^2  \dv \dx\\
 \lesssim & \sum_{p=-1}^\infty  \w(2^p)\left(  \int_{\domain} \psi_p^2  |  \p^{K}_{x}  f |^2 \dv\dx \right)^{\frac{k}{K}} \left(   \int_{\domain} |  \p^{L}_{v} (\psi_p  f) |^2 \dv\dx \right)^{\frac{l}{L}} \left(  \int_{\domain} \psi_p^2   |  f |^2 \dv\dx \right)^{1 -\frac{k}{K} - \frac{l}{L}}
\end{split}
\end{equation*}
Distributing the weights $\w(2^p)$ inside the integrals, we use again the doubling property noting that each integral is over the $p$-th dyadic shell,
\[
\lesssim  \sum_{p=-1}^\infty \left(  \int_{\domain} \psi_p^2  |  \p^{K}_{x}  f |^2 \w \dv\dx \right)^{\th'} \left(   \int_{\domain} |  \p^{L}_{v} (\psi_p  f) |^2  \w \dv\dx \right)^{\th''} \left(  \int_{\domain} \psi_p^2   |  f |^2 \w \dv\dx \right)^{\th'''},
\]
and by the \HI,
\[
\begin{split}
\lesssim &  \left( \sum_{p=-1}^\infty \int_{\domain} \psi_p^2  |  \p^{K}_{x}  f |^2 \w \dv\dx \right)^{\th'} \left(  \sum_{p=-1}^\infty \int_{\domain} |  \p^{L}_{v} (\psi_p  f) |^2  \w \dv\dx \right)^{\th''} \left(\sum_{p=-1}^\infty  \int_{\domain} \psi_p^2   |  f |^2 \w \dv\dx \right)^{\th'''}\\
\lesssim & \left( \int_{\domain}  |  \p^{K}_{x}  f |^2 \w \dv\dx \right)^{ \th' } \left(  \sum_{l' \leq L} \int_{\domain} |  \p^l_v  f |^2  \w \dv\dx \right)^{\th''} \left( \int_{\domain}   |  f |^2 \w \dv\dx \right)^{\th'''},
\end{split}
\]
which has the required form.

\end{proof}

\begin{lemma}\label{l:wappr}
Suppose $\w$ is a doubling weight and $f_\e$ denotes convolution with the standard compactly supported mollifier $\chi_\e$. Then
\begin{align}
\|f_\e\|_{L^2(\w)} & \leq C\|f\|_{L^2(\w)} \label{e:wappr} \\
\|f - f_\e\|_{L^2(\w)} & \to 0, \text{ as } \e \to 0.\label{e:wapprconv}
\end{align}
\end{lemma}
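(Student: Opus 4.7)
The approach is standard for mollification on weighted Lebesgue spaces: bound $|f_\e|^2$ pointwise by a Jensen-averaged version of $|f|^2$, use Fubini to transfer the weight onto a translated argument, and conclude via the doubling property. For \eqref{e:wappr}, since $\chi_\e \geq 0$ with total mass one, Jensen's inequality yields
\begin{equation*}
|f_\e(x,v)|^2 \leq \int_\domain \chi_\e(y,w)\, |f(x-y, v-w)|^2 \dw \dy.
\end{equation*}
Multiplying by $\w(v)$, integrating over $\domain$, applying Fubini, and substituting $x' = x-y$, $v' = v-w$, the problem reduces to the uniform comparison
\begin{equation*}
\w(v'+w) \leq C\, \w(v') \quad \text{for all } v' \in \R^n \text{ and } |w| \leq 1,
\end{equation*}
which suffices once $\e$ is small enough that $\supp \chi_\e \subset B_1$.

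The pointwise comparison is the only place where the doubling and normalization hypotheses enter. I would split by the size of $|v'|$. For $|v'| \geq 2$ and $|w| \leq 1$, the ratio $|v'+w|/|v'|$ lies in $[1/2, 2]$, so the doubling bound applies directly. For $|v'| \leq 2$, both $v'$ and $v'+w$ lie inside the ball of radius $3$; iterating the doubling inequality a bounded number of times from a point inside the unit ball where $\w \sim 1$ shows that $\w \sim 1$ on $\{|v| \leq 3\}$, so the comparison is trivial there. Combining the two regimes yields the uniform bound, and hence \eqref{e:wappr}.

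For \eqref{e:wapprconv}, a standard three-epsilon density argument will do. Since $\w$ is locally bounded above and below (by the previous paragraph), $C_c^\infty(\domain)$ is dense in $L^2(\w)$. Given $\eta>0$, pick $g \in C_c^\infty(\domain)$ with $\|f - g\|_{L^2(\w)} < \eta$. For such $g$, the mollifications $g_\e$ converge to $g$ uniformly with supports contained in a fixed compact set, so $\|g - g_\e\|_{L^2(\w)} \to 0$ by dominated convergence. The decomposition
\begin{equation*}
\|f - f_\e\|_{L^2(\w)} \leq \|f - g\|_{L^2(\w)} + \|g - g_\e\|_{L^2(\w)} + \|(f - g)_\e\|_{L^2(\w)},
\end{equation*}
combined with \eqref{e:wappr} applied to $f - g$ in the last summand, concludes the proof.

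The only technically nontrivial point is the pointwise weight comparison $\w(v+w) \leq C\w(v)$; everything else is a routine application of Jensen, Fubini, and approximation. I do not expect any real obstacle, as the doubling hypothesis is tailored precisely to make this comparison immediate.
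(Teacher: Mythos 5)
Your proof is correct and follows essentially the same route as the paper: Jensen's inequality to bound $|f_\e|^2$ by the mollified $|f|^2$, Fubini plus translation invariance in $x$, a two-regime pointwise comparison of $\w(v)$ and $\w(v-w)$ using the doubling property and the normalization near the origin, and a density argument for the convergence. The paper merely states the convergence "follows from standard Lebesgue theory" without writing out the three-epsilon step that you made explicit, but the substance is identical.
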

\begin{proof}
We have 
\begin{equation*}\label{}
\begin{split}
\|f_\e\|_{L^2(\w)}^2  & \leq \int_\domain \chi_\e(y,w) \int_\domain  \w(v) |f(x-y,v-w)|^2 \dv \dx  \dw \dy \\
& = \int_\domain \chi_\e(y,w) \int_\domain  \w(v) |f(x,v-w)|^2 \dv \dx  \dw \dy.
\end{split}
\end{equation*}
If $|v|\leq 1$, then $|v -w| \leq 2$ for small $\e$, and thus, $\w(v) \sim \w(v-w) \sim 1$. Otherwise, if $|v| >1$, then $\w(v) / \w(v-w) \sim 1$ by the doubling property. Hence,
\[
\|f_\e\|_{L^2(\w)}^2 \lesssim  \int_\domain \chi_\e(y,w) \int_\domain  \w(v-w) |f(x,v-w)|^2 \dv \dx  \dw \dy = \|f\|_{L^2(\w)}^2.
\]

The convergence \eqref{e:wapprconv} follows from the standard Lebesgue theory.

\end{proof}


\end{document}